\documentclass{amsart}
\usepackage{enumitem}
\usepackage{graphicx}	
\usepackage{amsmath, amsthm, amssymb,mathtools}
\usepackage{amsfonts}
\usepackage{hyperref}
\usepackage{algpseudocode}
\usepackage{mathrsfs}
\usepackage[capitalise,noabbrev]{cleveref}
\usepackage[linesnumbered,ruled,vlined]{algorithm2e} 

\newcommand{\algname}[1]{\texttt{#1}}
\numberwithin{equation}{section}

\newcommand{\rpart}{\boldsymbol{\lambda}}

\newcommand{\E}{\mathbb{E}}
\newcommand{\B}{\mathscr{B}}

\theoremstyle{plain}
\newtheorem{theorem}{Theorem}[section]
\newtheorem{proposition}[theorem]{Proposition}
\newtheorem{corollary}[theorem]{Corollary}

\newtheorem{lemma}[theorem]{Lemma}

\theoremstyle{definition}
\newtheorem{definition}[theorem]{Definition}
\newtheorem{remark}[theorem]{Remark}

\newtheorem{problem}[theorem]{Problem}

\newtheorem{example}[theorem]{Example}

\DeclareMathOperator{\Des}{Des}
\DeclareMathOperator{\des}{des}

\DeclareMathOperator{\maj}{maj}
\DeclareMathOperator{\col}{col}
\DeclareMathOperator{\fmaj}{fmaj}
\DeclareMathOperator{\pr}{Pr}
\DeclareMathOperator{\Ess}{Ess}
\DeclareMathOperator{\stat}{stat}

\title[Descents and flag major index on conjugacy classes of $\mathfrak{S}_{n,r}$]{Descents and flag major index on conjugacy classes of colored permutation groups \\ without short cycles}

\author{Kevin Liu}
\address{Department of Mathematics and Computer Science, The University Of The South}
\email{keliu@sewanee.edu}
\author{Mei Yin} 
\address{Department of Mathematics, University of Denver}
\email{mei.yin@du.edu}

\begin{document}

\begin{abstract}
    We consider the descent and flag major index statistics on the colored permutation groups, which are wreath products of the form $\mathfrak{S}_{n,r}=\mathbb{Z}_r\wr \mathfrak{S}_n$. We show that the $k$-th moments of these statistics on $\mathfrak{S}_{n,r}$ will coincide with the corresponding moments on all conjugacy classes without cycles of lengths $1,2,\ldots,2k$. Using this, we establish the asymptotic normality of the descent and flag major index statistics on conjugacy classes of $\mathfrak{S}_{n,r}$ with sufficiently long cycles. Our results generalize prior work of Fulman involving the descent and major index statistics on the symmetric group $\mathfrak{S}_n$. Our methods involve an intricate extension of Fulman's work on $\mathfrak{S}_n$ combined with the theory of the degree for a colored permutation statistic, as introduced by Campion Loth, Levet, Liu, Sundaram, and Yin.
\end{abstract}

\keywords{colored permutation, statistic, cycle type, descent, flag major index, asymptotic normality}
\subjclass{05A05, 05E16, 60C05}

\maketitle 

\section{Introduction}

Statistics on the symmetric group $\mathfrak{S}_n$ are a major area of study in combinatorics, and three commonly studied statistics are the descent, inversion, and major index statistics. Descents appear in the study of card shuffling \cite{BayerDiaconis}, carries when adding numbers \cite{carries_shuffling,holte}, and flag varieties \cite{FULMAN1999390}. Inversions appear in the study of sorting objects \cite{KnuthTAOCP3} and testing randomness \cite{inversions}. The major index statistic was originally introduced by MacMahon \cite{majorindex}, who showed that it is equidistributed with the inversion statistic on $\mathfrak{S}_n$. Since then, the major index and its variations appeared in the study of random tableau \cite{strahov}, flag manifolds \cite{GARSIA1980229}, and symmetric functions \cite[Section 7.19]{StanEC2}. The descent and inversion statistics on $\mathfrak{S}_n$ also generalize to descent and length statistics on any Coxeter group \cite{BB}, which contain the signed symmetric groups $B_n$ as special cases. Many results on these statistics are known. See \cite{baxter2011,Petersen2015,GARSIA1979288} for some examples on $\mathfrak{S}_n$ and \cite{AdinBrentiRoichman,AdinGesselRoichman,MENDES2008,ReinerEuropJC1993-1} for some examples on $B_n$.

In this paper, we consider statistics defined over the colored permutation groups, which are wreath products of the form $\mathfrak{S}_{n,r}=\mathbb{Z}_r\wr \mathfrak{S}_n$. Colored permutation groups play an essential role in the classification of complex reflection groups \cite{shephard_todd_1954}, and they contain the symmetric groups $\mathfrak{S}_n\cong \mathfrak{S}_{n,1}$ and the signed symmetric groups $B_n\cong \mathfrak{S}_{n,2}$ as special cases. Similar to how elements of $B_n$ can be viewed as certain bijections on $\{\pm 1,\ldots,\pm n\}$, elements in $\mathfrak{S}_{n,r}$ can be viewed as certain bijections on $r$ copies of $\{1,\ldots,n\}$, each indexed with an element in $\mathbb{Z}_r$.

Many statistics on $\mathfrak{S}_{n,r}$ have been studied, and many of these generalize ones on $\mathfrak{S}_n$ and $B_n$. See \cite{steingrim94} and \cite{fire2005statistics} for numerous examples. 
We will focus on the descent and flag major index statistics on $\mathfrak{S}_{n,r}$, which respectively generalize the descent and major index statistics on $\mathfrak{S}_{n}$. The descent statistic $\des_{n,r}$ on $\mathfrak{S}_{n,r}$ was introduced by Steingr\'{i}msson \cite{steingrim94}, who showed that $\des_{n,r}$ is equidistributed with the excedance statistic on $\mathfrak{S}_{n,r}$, and 
its generating function satisfies
\begin{equation}\label{eq:des_distribution}
    \frac{1}{(1-q)^{n+1}}\sum_{(\omega,\tau)\in \mathfrak{S}_{n,r}} q^{\des_{n,r}(\omega,\tau)}=\sum_{i=0}^{\infty} (ir+1)^n q^i.
\end{equation}
The flag major index statistic $\fmaj_{n,r}$ was introduced by Adin and Roichman \cite{AdinRoichman1}, who showed that $\fmaj_{n,2}$ on the signed symmetric group $B_n\cong \mathfrak{S}_{n,2}$ is equidistributed with the length statistic on $B_n$. This is an analog of MacMahon's result involving equidistribution of the major index and inversion statistics on $\mathfrak{S}_n$. Subsequent work by Haglund, Loehr, and Remmel \cite{HAGLUND2005835} established that the distribution of $\fmaj_{n,r}$ for general $r$ is given by
\begin{equation}\label{eq:fmaj_distribution}
    \sum_{(\omega,\tau)\in \mathfrak{S}_{n,r}}q^{\fmaj_{n,r}(\omega,\tau)} = [r]_q [2r]_q\cdots [nr]_q,
\end{equation}
where $[ir]_q=1+q+q^2+\cdots +q^{ir-1}$ is the $q$-integer of $ir$. This coincides with the Poinc\'{a}re polynomial of $\mathfrak{S}_{n,r}$ as a complex reflection group \cite[Theorem 1.4 and Table 1]{geek}, but does not in general coincide with the generating function for the length statistic on $\mathfrak{S}_{n,r}$ \cite[Theorem 4.4]{bagno}.

\subsection*{Main results}

We study the statistics $\des_{n,r}$ and $\fmaj_{n,r}$ on conjugacy classes of $\mathfrak{S}_{n,r}$ with sufficiently long cycles. Recall that a conjugacy class in $\mathfrak{S}_n$ is uniquely determined by the common cycle type of the permutations in the class. Elements in $\mathfrak{S}_{n,r}$ can also be expressed in cycle notation, and this leads to a generalized notion of cycle type that determines conjugacy classes of $\mathfrak{S}_{n,r}$. The precise definition is somewhat technical, so we will reserve a careful treatment for \cref{sec:preliminaries}. Similar to the use of $C_{\lambda}$ for conjugacy classes of $\mathfrak{S}_n$, we use $C_{\rpart}$ to denote the conjugacy classes of $\mathfrak{S}_{n,r}$ indexed by $\rpart$. 

Though there is some prior work involving statistics on conjugacy classes of $\mathfrak{S}_n$ \cite{FulmanJCTA1998,GesselReutenauer,Brenti1993,CooperJonesZhuang2020} and $B_n\cong \mathfrak{S}_{n,2}$ \cite{ReinerEuropJC1993-2,FulmanKimLeePetersen2021}, statistics on conjugacy classes of general colored permutation groups have not been explored heavily. The main theoretical advance appears in recent work by Campion Loth, Levet, Liu, Sundaram, and Yin, where they showed in \cite[Theorem 1.1]{GRWCColoredPermutationStatistics} that any fixed moment of a colored permutation statistic will coincide on all conjugacy classes of $\mathfrak{S}_{n,r}$ with sufficiently long cycles. Our main result strengthens this for the special cases of $\des_{n,r}$ and $\fmaj_{n,r}$.

\begin{theorem}\label{thm:1}
Let $C_{\rpart}$ be a conjugacy class of $\mathfrak{S}_{n,r}$. If $C_{\rpart}$ has no cycles of lengths $1,2,\ldots,2k$, then the $k$-th moments of $\des_{n,r}$ and $\fmaj_{n,r}$ on $C_{\rpart}$ match the respective $k$-th moments on $\mathfrak{S}_{n,r}$.
\end{theorem}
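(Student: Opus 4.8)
The plan is to reduce each $k$-th moment to a weighted sum of \emph{local} joint probabilities and then prove that these probabilities are insensitive to the cycle type once every cycle is long. Write $\des_{n,r}=\sum_i d_i$, where $d_i$ is the indicator of a colored descent in positions $i,i+1$, so that $d_i$ depends only on the relative order and colors at those two positions. Expanding the $k$-th power gives
\begin{equation*}
\E_{C_{\rpart}}\bigl[\des_{n,r}^{\,k}\bigr]
=\sum_{(i_1,\dots,i_k)}\E_{C_{\rpart}}\bigl[d_{i_1}\cdots d_{i_k}\bigr],
\end{equation*}
and each summand depends only on the joint data at the at most $2k$ positions $\bigcup_j\{i_j,i_j+1\}$. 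For the flag major index I would first record its standard decomposition $\fmaj_{n,r}=r\cdot\maj+\col$, where $\maj=\sum_i i\,d_i$ weights each descent by its position and $\col=\sum_i\tau_i$ is the sum of colors; expanding $(r\maj+\col)^k$ again yields a sum whose summands are supported on at most $2k$ positions, now carrying fixed positional weights $i_1\cdots i_a$ (with $a\le k$).

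In the language of \cite{GRWCColoredPermutationStatistics}, this is precisely the statement that $\des_{n,r}$ and $\fmaj_{n,r}$ decompose into essentialized pieces each supported on a window of at most two positions, so that $\des_{n,r}^{\,k}$ and $\fmaj_{n,r}^{\,k}$ have degree at most $2k$. The crux is then the following stabilization, sharpening the qualitative ``sufficiently long cycles'' of \cite[Theorem~1.1]{GRWCColoredPermutationStatistics} to the threshold $2k$: \emph{for any fixed set of at most $2k$ positions, the joint distribution of the relative order of the values and of the colors at those positions is the same under the uniform measure on $C_{\rpart}$ as under the uniform measure on $\mathfrak{S}_{n,r}$, whenever $C_{\rpart}$ has no cycles of lengths $1,2,\dots,2k$.} Granting this, every summand $\E_{C_{\rpart}}[d_{i_1}\cdots d_{i_k}]$ equals its counterpart over $\mathfrak{S}_{n,r}$. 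Since the positional weights attached to each tuple are identical on both sides, the weighted sums --- and hence the full $k$-th moments of both $\des_{n,r}$ and $\fmaj_{n,r}$ --- agree. Notably, this reduces the flag major index, despite its positional weighting, to exactly the same local matching as the descent statistic.

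To establish the stabilization I would extend Fulman's cycle-counting \cite{FulmanJCTA1998} to the colored setting. A uniform element of $C_{\rpart}$ is obtained by distributing values and colors around cycles of prescribed lengths and prescribed color sums, the data recorded by $\rpart$. When every cycle is longer than $2k$, a set of at most $2k$ marked positions cannot be trapped in short cycles or collide along a single cycle, so the relative order of the values at the marked positions is uniform and the colors there are generic, matching $\mathfrak{S}_{n,r}$ exactly; the obstructions to this uniformity are contributed solely by cycles of length at most $2k$. Concretely, I expect a colored Gessel--Reutenauer expansion \cite{GesselReutenauer} in which each local joint probability over $C_{\rpart}$ is extracted as a coefficient of a symmetric-function expression whose dependence on $\rpart$ enters only through power sums indexed by parts of size at most $2k$; the hypothesis that $C_{\rpart}$ has no such parts then forces equality with the full group.

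The main obstacle is the colored local matching in this last step. Unlike in $\mathfrak{S}_n$, a conjugacy class of $\mathfrak{S}_{n,r}$ constrains not only the cycle lengths but also the color sum around each cycle, so $C_{\rpart}$ is more rigid and the color at a marked position need not be independent of the relative order of the values. Disentangling these two sources of dependence, and checking that both become simultaneously generic once all cycles exceed length $2k$, is where the degree theory of \cite{GRWCColoredPermutationStatistics} must be combined with the extension of Fulman's counting; this is the technical heart of the argument. By contrast, the positional weights in $\maj$ and the need to treat small positions uniformly are handled automatically by the term-by-term matching above.
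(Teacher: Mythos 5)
Your overall strategy---expand $\des_{n,r}^k$ and $\fmaj_{n,r}^k$ into products of local indicators and match the expectation of each product on $C_{\rpart}$ against $\mathfrak{S}_{n,r}$---is exactly the paper's strategy. But the lemma you hang everything on, the ``stabilization'' asserting that for \emph{any} set of at most $2k$ positions the joint distribution of relative orders and colors on $C_{\rpart}$ matches that on $\mathfrak{S}_{n,r}$, is false as stated. Take $k=1$ and the class of $3$-cycles in $\mathfrak{S}_3$ (no cycles of length $1$ or $2$): its two elements are $[2,3,1]$ and $[3,1,2]$, so $\pr[\omega(1)>\omega(3)]=1$ there, versus $1/2$ on $\mathfrak{S}_3$. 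More robustly, on the class of $n$-cycles the relative order at non-consecutive positions is biased by $\Theta(1/n)$, so no exact moment identity can rest on such a claim. What is actually true, and what the paper proves, is much narrower: only the specific events ``values decreasing within each \emph{block of consecutive positions} $\B_1,\dots,\B_t$ induced by $a_1,\dots,a_k$'' have matching probabilities, and this is established not by a Gessel--Reutenauer-type expansion but by letting the Young subgroup $J=\mathfrak{S}_{\B_1}\times\cdots\times\mathfrak{S}_{\B_t}$ act on $C_{\rpart}$ by conjugation, showing the orbits are free when all cycles exceed length $2k$, and running an explicit algorithm (\algname{ColoredDescents}, \cref{lem:CDalgorithm}) to exhibit a \emph{unique} element per orbit with the prescribed descents. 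You correctly flag this as the technical heart, but you leave it entirely open and point toward a mechanism (symmetric-function coefficients depending only on small power sums) that is not how Fulman's argument works and that your own counterexample-prone formulation suggests cannot deliver exact equality.

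Two further gaps in the $\fmaj_{n,r}$ half. First, a descent at position $n$ is the event $\tau(n)\neq 0$, so the block containing $n$ carries a color condition and the matching probability acquires a factor $\left(\frac{r-1}{r}\right)^{|\B_t|}$ (\cref{prop:e_des_with_n}, \cref{prop:e_des_conjugacy_class}); your two-position template $\{i,i+1\}$ does not account for this. Second, the claim that $\fmaj$ reduces ``to exactly the same local matching'' as $\des$ hides the real difficulty: after expanding, one must control mixed products $X_{a_1}\cdots X_{a_j}Y_{a_{j+1},c_{j+1}}\cdots Y_{a_k,c_k}$ in which color constraints sit at positions unrelated to, or only partially covering, the descent blocks. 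The conjugation action of $J$ only stabilizes the color event when the color constraint is constant on each block, so the paper must first use the degree machinery of \cite{GRWCColoredPermutationStatistics} (\cref{lem:XYproducts}--\cref{lem:fmaj_reduction2}) to strip off colors at inessential positions and to refine $Z$ into pieces whose color constraints align with the blocks, before the orbit argument applies. Without these reductions, and without a proof of the restricted stabilization itself, the proposal is a correct outline with its central lemma both overstated and unproven.
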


The descent and flag major index statistics are known to be asymptotically normal on $\mathfrak{S}_{n,r}$ \cite{CM2012}. Combining this fact with the Method of Moments and \cref{thm:1}, we obtain the following corollary, which shows asymptotic normality of $\des_{n,r}$ and $\fmaj_{n,r}$ on conjugacy classes with sufficiently long cycles.

\begin{corollary}\label{cor:1}
For every $n\geq 1$, let $C_{\rpart_n}$ be a conjugacy class of $\mathfrak{S}_{n,r}$ such that for all $i$, the number of cycles of length $i$ in $\rpart_n$ approaches 0 as $n\to\infty$. Let $\stat_n$ for $n\geq 1$ be either the descent or flag major index statistic on $C_{\rpart_n}$ with mean $\mu_n$ and variance $\sigma_n^2$. Then as $n\to\infty$, the random variable  $\frac{\stat_n-\mu_n}{\sigma_n}$ converges in distribution to the standard normal distribution. 
\end{corollary}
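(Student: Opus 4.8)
The plan is to deduce the result from the asymptotic normality on the full group $\mathfrak{S}_{n,r}$ established in \cite{CM2012}, transferring moments across conjugacy classes via \cref{thm:1} and then invoking the Method of Moments. To set up, for each $n$ let $\mu_n'$ and $(\sigma_n')^2$ denote the mean and variance of $\stat_n$ over all of $\mathfrak{S}_{n,r}$, and let $W_n=(\stat_n-\mu_n')/\sigma_n'$ be the standardized statistic on $\mathfrak{S}_{n,r}$. The input from \cite{CM2012} is that $W_n$ converges in distribution to a standard normal $N$; what I will actually use is the accompanying moment convergence $\E[W_n^k]\to\E[N^k]$ for every fixed $k$.

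Next I would exploit the hypothesis on $\rpart_n$. Since the number of cycles of any fixed length in $\rpart_n$ is a nonnegative integer tending to $0$, it is eventually equal to $0$; thus for each fixed $k$ there is an $N(k)$ so that whenever $n\geq N(k)$, the class $C_{\rpart_n}$ has no cycles of lengths $1,2,\ldots,2k$. For such $n$, \cref{thm:1} gives that the $j$-th moment of $\stat_n$ on $C_{\rpart_n}$ equals the $j$-th moment on $\mathfrak{S}_{n,r}$ for every $j\leq k$. Specializing to $k=1$ and $k=2$ shows that, for all sufficiently large $n$, the centering and scaling used for the conjugacy class agree with those for the full group: $\mu_n=\mu_n'$ and $\sigma_n=\sigma_n'$.

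It then remains to match the higher moments of the standardized statistic $V_n=(\stat_n-\mu_n)/\sigma_n$ on $C_{\rpart_n}$. Expanding $(\stat_n-\mu_n)^k$ by the binomial theorem writes the $k$-th central moment as a fixed polynomial in $\mu_n$ and the raw moments $\E[\stat_n^j]$ with $j\leq k$. For $n\geq N(k)$, every such raw moment, together with $\mu_n$ and $\sigma_n$, coincides with its full-group counterpart, so $\E[V_n^k]=\E[W_n^k]$ for all large $n$, and hence $\E[V_n^k]\to\E[N^k]$. Because the standard normal distribution is uniquely determined by its moments, the Method of Moments then yields $V_n\to N$ in distribution, which is the assertion of the corollary.

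I expect the main obstacle to be the first step: passing from the distributional statement of \cite{CM2012} to genuine moment convergence $\E[W_n^k]\to\E[N^k]$ on $\mathfrak{S}_{n,r}$. Convergence in distribution alone does not force convergence of moments, so this must be read off directly from the moment-based proof of the central limit theorem there, or else supplied through a separate uniform integrability argument exploiting that $\des_{n,r}$ and $\fmaj_{n,r}$ are polynomially bounded. Once moment convergence on the full group is in hand, the transfer via \cref{thm:1} and the final appeal to the Method of Moments are routine bookkeeping.
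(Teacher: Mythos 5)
Your argument matches the paper's proof essentially step for step: eventually-vanishing short cycles let \cref{thm:1} identify all moments (hence the mean, variance, and, via the binomial expansion, every standardized moment) on $C_{\rpart_n}$ with those on $\mathfrak{S}_{n,r}$ for large $n$, after which the Method of Moments together with the Chow--Mansour normality results finishes the proof. The subtlety you flag about upgrading the distributional convergence of \cite{CM2012} to moment convergence is real but is likewise left implicit in the paper, so your write-up is at the same level of rigor and takes the same route.
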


\subsection*{Related Work}
\cref{thm:1} and \cref{cor:1} were inspired by prior work of Fulman \cite{FulmanJCTA1998}, who proved the analogous results for the descent and major index statistics on conjugacy classes of $\mathfrak{S}_n$ satisfying appropriate conditions on cycle lengths. Our method of proving \cref{thm:1} for $\des_{n,r}$ is based on Fulman's original approach. For $\fmaj_{n,r}$,  we combine our work for $\des_{n,r}$ with the theory of degree for a colored permutation statistic, as introduced in \cite{GRWCColoredPermutationStatistics}. Since the descent and flag major index statistics on $\mathfrak{S}_{n,1}$ coincide with the classical descent and major index statistics on $\mathfrak{S}_n$, specializing \cref{thm:1} and \cref{cor:1} to $\mathfrak{S}_n$ recovers the original results of Fulman.

In the case of $r=2$, the descent statistic $\des_{n,2}$ on $\mathfrak{S}_{n,2}$ does not coincide with the descent statistic $\des_{B_n}$ on $B_n$ as a Coxeter group under the usual isomorphism between $\mathfrak{S}_{n,2}$ and $B_n$. See \cite[Section 8.1]{BB} for a thorough discussion of $\des_{B_n}$. Though $\des_{n,2}$ and $\des_{B_n}$ do not coincide, \cite[Theorem 3.4]{brenti94} and \eqref{eq:des_distribution} show that these statistics share the same distribution on $B_n\cong \mathfrak{S}_{n,2}$. However, one can find conjugacy classes where $\des_{n,2}$ and $\des_{B_n}$ do not share the same distribution. Consequently, \cref{thm:1} and \cref{cor:1} do not apply to $\des_{B_n}$, though analogs of these results for $\des_{B_n}$ were previously established in \cite{GRWCColoredPermutationStatistics}, where the authors also derived explicit formulas for the distribution of $\des_{B_n}$ on conjugacy classes of $B_n$.

\subsection*{Outline of Paper}

We start in \cref{sec:preliminaries} by outlining preliminary information involving colored permutation groups and statistics. We then establish \cref{thm:1} and \cref{cor:1} for the descent and flag major index statistics in \cref{sec:descents} and \cref{sec:flag-major}, respectively. We conclude with potential future directions in \cref{sec:conclusion}. 

\section{Preliminaries}\label{sec:preliminaries}

We begin with preliminaries on the colored permutation groups $\mathfrak{S}_{n,r}$, their conjugacy classes, and the specific statistics considered in this paper. Our definitions are primarily based on what is given in \cite{steingrim94} and \cite{CM2012}. For properties of the conjugacy classes of $\mathfrak{S}_{n,r}$, we use \cite{JamesKerber1981} as a reference, which contains a more general treatment of wreath products. 

\subsection{Colored permutation groups and statistics}
Let $\mathbb{Z}_r$ be the group of integers modulo $r$ and $\mathfrak{S}_n$ be the symmetric group on $[n]=\{1,2,\ldots,n\}$. The \emph{colored permutation group} $\mathfrak{S}_{n,r}$ is the wreath product $\mathbb{Z}_r\wr \mathfrak{S}_n$, which is the semidirect product $\mathbb{Z}_r^n \rtimes \mathfrak{S}_n$ formed from the permutation action of $\mathfrak{S}_n$ on $\mathbb{Z}_r^n$. An element in $\mathfrak{S}_{n,r}$ is called a \emph{colored permutation}, and it will be denoted $(\omega,\tau)$, where $\omega\in \mathfrak{S}_n$ and $\tau: [n]\to \mathbb{Z}_r$ is a function referred to as a \emph{coloring}. For brevity, we will usually express $\tau$ in the form $(\tau(1),\ldots \tau(n))$. From its construction as a wreath product, the group operation is defined as \[(\omega_1,\tau_1)(\omega_2,\tau_2)=(\omega_1\omega_2,(\tau_1\circ \omega_2)+\tau_2).\]

The colored permutation group $\mathfrak{S}_{n,r}$ can be embedded as a subgroup of the symmetric group $\mathfrak{S}_{rn}$, which we describe explicitly as follows. 
Let $[n]^r$ denote the set of $rn$ elements \[\{i^c : i\in [n], \, c\in \mathbb{Z}_r\},\] 
    where the superscript indicates the \emph{color} of an element in $[n]$. One can view the colored permutation $(\omega, \tau)$ as a bijection on $[n]^r$. We abuse notation and also denote this bijection $(\omega,\tau)$, and it is defined by $(\omega,\tau)(i^c) = \omega(i)^{\tau(i)+c}$ for all $i\in [n]$ and $c\in \mathbb{Z}_r$. Observe that for all $i\in [n]$, we have that
    \begin{equation}
        \begin{split}
            (\omega_1,\tau_1)(\omega_2,\tau_2)(i^0) & = (\omega_1,\tau_1)(\omega_2(i)^{\tau_2(i)}) \\
            & =\omega_1\omega_2(i)^{\tau_1(\omega_2(i))+\tau_2(i)}\\
            & =(\omega_1\omega_2,(\tau_1\circ \omega_2)+\tau_2)(i^0),
        \end{split}
    \end{equation}
    so this identification is a group homomorphism that identifies $\mathfrak{S}_{n,r}$ as a subgroup of $\mathfrak{S}_{nr}$. 
    
    Since the images of $i^0$ for $i\in [n]$ are sufficient for determining $(\omega,\tau)\in \mathfrak{S}_{n,r}$, one can use these to form the two-line and one-line notations of $(\omega,\tau)$. The \emph{two-line notation} of $(\omega,\tau)$ is a $2\times n$ array with $1^0,2^0,\ldots n^0$ in the first line and $(\omega,\tau)(1^0),(\omega,\tau)(2^0),\ldots,(\omega,\tau)(n^0)$ in the second line. The \emph{one-line notation} of $(\omega,\tau)$ results from deleting the first line of the two-line notation. We illustrate these with an example below.

\begin{example}\label{perm_example_1a}
    Consider $\omega=[3,8,5,6,2,1,4,7]\in \mathfrak{S}_8$ expressed in one-line notation and the coloring $\tau=(1,0,0,1,2,2,0,1)\in \mathbb{Z}_3^8$. This defines an element in $\mathfrak{S}_{8,3}$ whose corresponding bijection is determined by
    \[(\omega,\tau)(1^0)=3^1, \, (\omega,\tau)(2^0)=8^0, \, (\omega,\tau)(3^0)=5^0, \, (\omega,\tau)(4^0)=6^1 \]
    \[(\omega,\tau)(5^0)=2^2,\, (\omega,\tau)(6^0)=1^2,\, (\omega,\tau)(7^0)=4^0,\, (\omega,\tau)(8^0)=7^1.\]
    This can be expressed in the two-line and one-line notations as 
    \begin{equation}
        \begin{split}
            (\omega,\tau) & =\begin{bmatrix}
        1^0 & 2^0 & 3^0 & 4^0 & 5^0 & 6^0 & 7^0 & 8^0 \\
        3^1 & 8^0 & 5^0 & 6^1 & 2^2 & 1^2 & 4^0 & 7^1
    \end{bmatrix} \\
    & =\begin{bmatrix}
        3^1 & 8^0 & 5^0 & 6^1 & 2^2 & 1^2 & 4^0 & 7^1
    \end{bmatrix}.
        \end{split}
    \end{equation}
\end{example}

We will primarily focus on three statistics on $\mathfrak{S}_{n,r}$: descents, major index, and flag major index. For any $(\omega,\tau)\in \mathfrak{S}_{n,r}$, an index $i\in [n]$ is a \emph{descent} of $(\omega,\tau)$ if $\tau(i)>\tau(i+1)$, or $\tau(i)=\tau(i+1)$ and $\omega(i)>\omega(i+1)$, where we use the convention $\tau(n+1)=0$ and $\omega(n+1)=n+1$. One can alternatively fix the total order on $[n]^r$
\begin{equation}\label{eq:descentordering}
    1^0<2^0<3^0<\cdots <1^1<2^1<3^1<\cdots < 1^{r-1}<2^{r-1}< 3^{r-1}<\cdots
\end{equation}
and define a descent to be any $i\in [n]$ such that $(\omega,\tau)(i^0)>(\omega,\tau)((i+1)^0)$, with the convention that $(\omega,\tau)((n+1)^0)=(n+1)^0$. 

Letting $\Des_{n,r}(\omega,\tau)$ denote the set of descents of $(\omega,\tau)\in \mathfrak{S}_{n,r}$, the descent and major index statistics on $\mathfrak{S}_{n,r}$ are respectively defined as
\[\des_{n,r}(\omega,\tau)=|\Des_{n,r}(\omega,\tau)| \text{ \quad and \quad }\maj_{n,r}(\omega,\tau) = \sum_{i\in \Des_{n,r}(\omega,\tau)\cap [n-1]} i.\]
Observe that when $r=1$, these reduce to the usual descent and major index statistics on $\mathfrak{S}_n$. In this case, we will omit $r$ from the subscript and denote these statistics as $\des_n$ and $\maj_n$. 

The \emph{color} and \emph{flag major index} statistics on $\mathfrak{S}_{n,r}$ are the nonnegative integers defined by 
\[\col_{n,r}(\omega,\tau)=\sum_{i=1}^n \tau(i) \text{ \quad and \quad }\fmaj_{n,r}(\omega,\tau)=r\cdot \maj_{n,r}(\omega,\tau)+\col_{n,r}(\omega,\tau).\]
Note that the $\col_{n,r}$ statistic uses $\{0,1,\ldots,r-1\}$ as representative elements in $\mathbb{Z}_r$ and adds them as elements in $\mathbb{Z}$. In the case when $r=1$, the color statistic is identically 0, so the flag major index coincides with the usual major index on $\mathfrak{S}_n$.

\begin{example}\label{ex:perm_example_1b}
    Consider the permutation $(\omega,\tau)\in \mathfrak{S}_{8,3}$ with one-line notation
    \[(\omega,\tau)=[3^1,8^0,5^0,6^1,2^2,1^2,4^0,7^1]\]
    The descent set of $(\omega,\tau)$ is $\{1,2,5,6,8\}$, and the sum of the colors that appear is $7$. Using this, we calculate \[\des_{8,3}(\omega,\tau)=5, \quad \maj_{8,3}(\omega,\tau)=14, \quad  \text{ and } \quad \fmaj_{8,3}(\omega,\tau)=3\cdot 14+7=49.\]
\end{example}

For any statistic $X:\mathfrak{S}_{n,r} \to \mathbb{R}$, we can consider it as a random variable by equipping $\mathfrak{S}_{n,r}$ with the uniform distribution. The corresponding probability distribution is defined by 
\[ \pr_{\mathfrak{S}_{n,r}}[X=i]=|X^{-1}(i)|/|\mathfrak{S}_{n,r}|. \]
For each positive integer $k$, the $k$-th moment of $X$ will be denoted $\E_{\mathfrak{S}_{n,r}}[X^k]$. For the descent and flag major index statistics, Chow and Mansour established the following results involving their asymptotic distributions. 

\begin{theorem}\cite[Theorem 3.1 and Theorem 3.4]{CM2012}\label{thm:normal_Snr} For any positive integers $n$ and $r$, $\des_{n,r}$ has mean $\mu_{n,r}=\frac{rn+r-2}{2r}$ and variance $\sigma_{n,r}^2=\frac{n+1}{12}$, and as $n\to\infty$, the standardized random variable $\frac{\des_{n,r}-\mu_{n,r}}{\sigma_{n,r}}$ converges to a standard normal distribution.
\end{theorem}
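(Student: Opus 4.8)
The plan is to extract all three assertions from the descent generating polynomial
\[
D_{n,r}(q)=\sum_{(\omega,\tau)\in\mathfrak{S}_{n,r}}q^{\des_{n,r}(\omega,\tau)},
\]
which by \eqref{eq:des_distribution} equals $(1-q)^{n+1}\sum_{i\ge 0}(ir+1)^n q^i$. First I would rewrite the right-hand side using the operator $\theta=q\frac{d}{dq}$: since $\theta$ sends $q^i$ to $iq^i$, we have $\sum_{i\ge 0}(ir+1)^n q^i=(r\theta+1)^n\frac{1}{1-q}$, and peeling off one factor of $(r\theta+1)$ and translating back through the powers of $(1-q)$ yields the recurrence
\[
D_{n,r}(q)=\bigl[1+(rn-1)q\bigr]D_{n-1,r}(q)+rq(1-q)D_{n-1,r}'(q),
\]
with $D_{0,r}(q)=1$. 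Setting $q=1$ collapses the derivative term and gives $D_{n,r}(1)=rn\,D_{n-1,r}(1)=r^n n!=|\mathfrak{S}_{n,r}|$, as it must.

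For the mean and variance I would differentiate this recurrence at $q=1$. Writing $m_n=D_{n,r}'(1)/D_{n,r}(1)$ for the mean, the recurrence produces the first-order recursion $m_n=\frac{rn-1}{rn}+\frac{n-1}{n}m_{n-1}$, which solves to $\mu_{n,r}=\frac{n+1}{2}-\frac1r=\frac{rn+r-2}{2r}$; a parallel computation using the second derivative (that is, the second factorial moment) yields $\sigma_{n,r}^2=\frac{n+1}{12}$, independent of $r$. The mean can also be obtained transparently by linearity of expectation: for $i<n$ the probability that $i$ is a descent is exactly $\frac12$, while position $n$ is a descent precisely when $\tau(n)\neq 0$, with probability $\frac{r-1}{r}$, so that $\mu_{n,r}=\frac{n-1}{2}+\frac{r-1}{r}$.

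The substantive step is the limit theorem, for which I would prove that $D_{n,r}(q)$ has only real (necessarily nonpositive) roots and then invoke Harper's method. Real-rootedness I would establish by induction on $n$ using the recurrence: assuming $D_{n-1,r}$ has simple negative real roots $\rho_1<\cdots<\rho_m$, the derivative term vanishes at each $\rho_i$, so $D_{n,r}(\rho_i)=r\rho_i(1-\rho_i)D_{n-1,r}'(\rho_i)$; since $\rho_i(1-\rho_i)<0$ and $D_{n-1,r}'$ alternates sign along consecutive roots, $D_{n,r}$ changes sign between consecutive $\rho_i$, and a boundary analysis near $0$ and $-\infty$ accounts for the remaining roots. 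I expect this interlacing bookkeeping — confirming that \emph{all} of the roots are real and controlling the behavior at the extreme roots as the degree grows by one — to be the main obstacle, exactly as in Frobenius's classical argument for the Eulerian polynomials, of which $D_{n,r}$ is the $r$-colored generalization.

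Once real-rootedness is in hand, the fact that the constant term $D_{n,r}(0)\ge 1$ forces every root to be strictly negative, so the probability generating function $D_{n,r}(q)/D_{n,r}(1)$ factors as $\prod_j\frac{q+|\rho_j|}{1+|\rho_j|}$; hence $\des_{n,r}$ has the law of a sum of independent (non-identical) Bernoulli variables with success probabilities $p_j=\frac{1}{1+|\rho_j|}\in(0,1)$. Asymptotic normality then follows from the Lindeberg–Feller central limit theorem: the standardization is legitimate because $\sigma_{n,r}^2=\frac{n+1}{12}\to\infty$, and the Lindeberg condition holds trivially since the centered summands are bounded by $1$ while $\sigma_{n,r}\to\infty$. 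This gives convergence of $\frac{\des_{n,r}-\mu_{n,r}}{\sigma_{n,r}}$ to the standard normal distribution, completing all three assertions.
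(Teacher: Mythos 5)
The paper does not prove this theorem at all: it is imported as a black box from Chow and Mansour \cite{CM2012}, so there is no internal argument to compare against. Your proposal is a genuine, essentially correct proof along the standard lines (a recurrence from \eqref{eq:des_distribution}, then real-rootedness plus Harper's method), and I have checked the recurrence $D_{n,r}(q)=[1+(rn-1)q]D_{n-1,r}(q)+rq(1-q)D_{n-1,r}'(q)$, the evaluation $D_{n,r}(1)=r^nn!$, and the mean recursion $m_n=\frac{rn-1}{rn}+\frac{n-1}{n}m_{n-1}$; all are right, and your probabilistic derivation of the mean agrees with the paper's later computations (\cref{prop:e_descents_no_n}, \cref{prop:e_des_with_n}). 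Three points deserve attention. First, the variance is asserted, not computed; carrying out the second-factorial-moment recursion $t_n=\frac{(rn-r-1)(rn-2)}{r^2n}+\frac{n-2}{n}t_{n-1}$ does give $\sigma_{n,r}^2=\frac{n+1}{12}$ for $n\geq 2$, but note that at $n=1$ the true variance is $\frac{r-1}{r^2}$, so the ``independent of $r$'' claim (and indeed the quoted statement itself) only holds from $n=2$ on --- a harmless defect asymptotically, but your writeup should not claim to derive $\frac{n+1}{12}$ for all $n\geq 1$. Second, in the real-rootedness induction you write that ``the derivative term vanishes at each $\rho_i$''; it is the \emph{non}-derivative term $[1+(rn-1)q]D_{n-1,r}(q)$ that vanishes there, which is what your displayed identity $D_{n,r}(\rho_i)=r\rho_i(1-\rho_i)D_{n-1,r}'(\rho_i)$ actually uses, so this is only a wording slip. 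Third, the interlacing step you flag as the main obstacle is real work (in particular the degree of $D_{n,r}$ is $n$ for $r\geq 2$ but $n-1$ for $r=1$, so the count of extra roots at the boundary differs between the two cases, and simplicity of the roots must be carried through the induction); it does go through, and real-rootedness of these colored Eulerian polynomials is in any case known (it goes back to Steingr\'imsson \cite{steingrim94}), so you could shortcut that step by citation. With the variance computation actually executed, this would be a complete and self-contained proof of a statement the paper only cites.
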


\begin{theorem}\cite[Theorem 4.1 and Theorem 4.3]{CM2012}\label{thm:fmaj_normal_Snr} 
For any positive integers $n$ and $r$, $\fmaj_{n,r}$ has mean $\mu_{n,r}=\frac{n(rn+r-2)}{4}$ and variance $\sigma_{n,r}^2=\frac{2r^2n^3+3r^2n^2+(r^2-6)n}{72}$, and as $n\to\infty$, the standardized random variable $\frac{\fmaj_{n,r}-\mu_{n,r}}{\sigma_{n,r}}$ converges to a standard normal distribution.
\end{theorem}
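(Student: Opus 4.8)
The plan is to exploit the product factorization of the flag major index generating function in \eqref{eq:fmaj_distribution} to realize $\fmaj_{n,r}$ as a sum of independent random variables, after which the mean, variance, and asymptotic normality all follow from standard probabilistic machinery. First I would divide \eqref{eq:fmaj_distribution} by $|\mathfrak{S}_{n,r}| = r^n n!$, using $r^n n! = \prod_{i=1}^n (ir)$, to obtain the probability generating function
\begin{equation*}
    \E_{\mathfrak{S}_{n,r}}\left[q^{\fmaj_{n,r}}\right] = \prod_{i=1}^n \frac{[ir]_q}{ir} = \prod_{i=1}^n \frac{1 + q + \cdots + q^{ir-1}}{ir}.
\end{equation*}
Each factor is the probability generating function of a random variable $U_i$ distributed uniformly on $\{0, 1, \ldots, ir-1\}$. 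Since a product of probability generating functions corresponds to a sum of independent random variables, this shows that under the uniform distribution on $\mathfrak{S}_{n,r}$,
\begin{equation*}
    \fmaj_{n,r} \stackrel{d}{=} U_1 + U_2 + \cdots + U_n,
\end{equation*}
with the $U_i$ mutually independent. This decomposition is the conceptual heart of the argument.

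With the decomposition in hand, the mean and variance follow from linearity and independence together with the standard moments of a discrete uniform variable. I would use $\E[U_i] = \frac{ir-1}{2}$ and $\operatorname{Var}(U_i) = \frac{(ir)^2 - 1}{12}$, then sum over $i$ using the closed forms for $\sum_{i=1}^n i$ and $\sum_{i=1}^n i^2$. This yields $\mu_{n,r} = \frac{n(rn+r-2)}{4}$ and $\sigma_{n,r}^2 = \frac{2r^2 n^3 + 3 r^2 n^2 + (r^2-6)n}{72}$, matching the claimed formulas.

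For asymptotic normality, I would invoke a central limit theorem for sums of independent, non-identically-distributed, bounded random variables, verifying either the Lindeberg or the Lyapunov condition. The key observation that makes this routine is a comparison of scales: each centered summand satisfies $|U_i - \E[U_i]| \le ir \le nr$, while $\sigma_{n,r}^2 \asymp n^3$ forces $\sigma_{n,r} \asymp n^{3/2}$. Hence for any fixed $\epsilon > 0$ and all sufficiently large $n$, the maximal deviation $nr$ is smaller than $\epsilon \sigma_{n,r}$, so every truncated second moment in the Lindeberg sum vanishes identically, and the Lindeberg condition therefore holds trivially for large $n$. (Alternatively, Lyapunov's condition with fourth moments is immediate, since $\sum_{i=1}^n \E|U_i - \E[U_i]|^4 \asymp n^5$ while $\sigma_{n,r}^4 \asymp n^6$.) Applying the Lindeberg--Feller theorem then gives convergence of $\frac{\fmaj_{n,r} - \mu_{n,r}}{\sigma_{n,r}}$ to the standard normal distribution. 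The only genuinely nontrivial step is the first one, namely recognizing and justifying the factorization into independent uniforms; everything downstream is a standard computation once the scales are compared.
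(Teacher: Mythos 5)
Your proposal is correct and complete. Note that the paper does not prove this statement at all: it is imported verbatim as a citation to Chow and Mansour \cite[Theorems 4.1 and 4.3]{CM2012}, so there is no internal proof to compare against. What you have written is a valid self-contained derivation from the Haglund--Loehr--Remmel identity \eqref{eq:fmaj_distribution}, which the paper does state: dividing by $|\mathfrak{S}_{n,r}|=\prod_{i=1}^n(ir)$ identifies the probability generating function of $\fmaj_{n,r}$ with that of a sum of independent uniforms $U_i$ on $\{0,\ldots,ir-1\}$ (equality in distribution, which is all the moments and the CLT require); your mean and variance computations check out against the stated formulas, and the scale comparison $\max_i |U_i-\E[U_i]|\le nr \ll n^{3/2}\asymp\sigma_{n,r}$ does make the Lindeberg condition hold vacuously for large $n$, so Lindeberg--Feller applies. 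This is the same standard mechanism underlying the asymptotic normality of the Mahonian distribution via Feller that the paper invokes in the proof of \cref{thm:CLTmaj}, and it is essentially the argument one would expect behind the cited result.
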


For our results on the asymptotic distributions of $\des_{n,r}$ and $\fmaj_{n,r}$, we will need additional tools from probability theory. In general, two different probability distributions can share the same moments. We will be primarily interested in normal distributions, which are uniquely determined by their moments, so once we have that the moments of a random variable $X$ coincide with those of a normal distribution, we can conclude that the distribution of $X$ coincides with a normal distribution. We will use this property for normal distributions in conjunction with a tool called the Method of Moments. See \cite[Section 30]{billingsley} for further details of this result.

\begin{theorem}[Method of Moments]
Suppose $\{X_n\}_{n\geq 1}$ and $Y$ are real-valued random variables with finite $k$-th moments for all $k$. If $Y$ is uniquely determined by its moments and 
    \[\lim_{n\to\infty} \E[X_n^k] = \E[Y^k],\]
    for all $k$, then $X_n$ converges in distribution to $Y$.
\end{theorem}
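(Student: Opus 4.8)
The plan is to prove this by the standard tightness-plus-subsequence argument underlying \cite[Section 30]{billingsley}. The key realization is that convergence of \emph{all} moments is stronger than it first appears: it not only pins down the candidate limit $Y$, but also supplies the compactness and the integrability control needed to run the argument.

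First I would establish tightness of the family $\{X_n\}_{n\geq 1}$. Since $\E[X_n^2]\to \E[Y^2]<\infty$, the second moments are uniformly bounded by some constant $C$, so Markov's inequality gives $\pr[|X_n|>M]\leq C/M^2$ uniformly in $n$. Hence $\{X_n\}$ is tight, and by Prokhorov's theorem (equivalently, Helly's selection principle applied to the associated distribution functions) every subsequence of $\{X_n\}$ admits a further subsequence $X_{n_j}$ that converges in distribution to some random variable $Z$.

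The heart of the argument is to identify $Z$ with $Y$, and this is where the main obstacle lies: weak convergence alone does not transfer moments, since $x\mapsto x^k$ is neither bounded nor compactly supported. I would overcome this with uniform integrability. Because $\E[|X_n|^{k+1}]$ converges and is therefore bounded in $n$, the estimate $\E[|X_n|^k\,\mathbf{1}_{\{|X_n|>a\}}]\leq a^{-1}\,\E[|X_n|^{k+1}]$ shows that the family $\{|X_{n_j}|^k\}_j$ is uniformly integrable. Weak convergence together with uniform integrability upgrades to convergence of $k$-th moments, so $\E[Z^k]=\lim_j \E[X_{n_j}^k]=\E[Y^k]$ for every $k$. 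Thus $Z$ and $Y$ share all moments, and since $Y$ is assumed to be determined by its moments, $Z$ and $Y$ have the same distribution.

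Finally I would invoke the subsequence principle: every subsequence of $\{X_n\}$ has a further subsequence converging in distribution to the common limit $Y$, which forces the entire sequence $X_n$ to converge in distribution to $Y$. The crucial input throughout is that moments of \emph{all} orders converge, since boundedness of the order-$(k+1)$ moments is precisely what yields the uniform integrability needed to control the order-$k$ moments under a weak limit; without it, the moment-transfer step would fail.
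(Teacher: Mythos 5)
Your argument is correct and is precisely the standard proof the paper is citing (Billingsley, Section 30): tightness from the bounded second moments, Helly/Prokhorov selection, uniform integrability of $\{|X_{n_j}|^k\}$ to pass moments to the subsequential limit $Z$, identification $Z\overset{d}{=}Y$ via moment determinacy, and the subsequence principle; the paper itself offers no proof, only the citation. One cosmetic point: the hypothesis gives convergence of the signed moments $\E[X_n^{k+1}]$, so for odd exponents you should justify boundedness of $\E[|X_n|^{k+1}]$ via, e.g., $\E[|X_n|^{k+1}]\leq 1+\E[X_n^{2k+2}]$, which is all your uniform-integrability estimate actually requires.
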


\subsection{Conjugacy classes of colored permutation groups}

Our work will focus on conjugacy classes of $\mathfrak{S}_{n,r}$, which we now describe. Similar to permutations in $\mathfrak{S}_n$, colored permutations also have a cycle notation. Starting with $(\omega,\tau)$, one can express $\omega$ in the usual cycle notation with color $0$ on all elements and then insert $\omega(i)^{\tau(i)}$ under $i^0$ for each $i\in [n]$. We will refer to this as the \emph{two-line cycle notation}. Removing the first row in every cycle then results in the \emph{cycle notation} for $(\omega,\tau)$. An example is shown below.

\begin{example}\label{perm_example_1c}
    Consider the permutation
    \[(\omega,\tau)=[3^1,8^0,5^0,6^1,2^2,1^2,4^0,7^1]\in \mathfrak{S}_{8,3}\]
    given in \cref{perm_example_1a}. The two-line cycle notation is given by
    \[(\omega,\tau)=\begin{pmatrix}
        1^0 & 3^0 & 5^0 & 2^0 & 8^0 & 7^0 & 4^0 & 6^0 \\
        3^1 & 5^0 & 2^2  & 8^0&  7^1& 4^0 &6^1 & 1^2 
    \end{pmatrix}.\]
    Deleting the first line results in the cycle notation
    \[(\omega,\tau)=(3^1 5^0 2^2 8^0 7^1 4^0 6^1 1^2).\]
    As in $\mathfrak{S}_{n}$, the cycle notation for elements in $\mathfrak{S}_{n,r}$ is not unique. 
\end{example}

Similar to permutations in $\mathfrak{S}_n$, colored permutations have a notion of cycle type derived from the cycle notation. An \emph{$r$-partition} of $n\in \mathbb{Z}_+$ is an $r$-tuple of partitions $\rpart=(\lambda^j)_{j=0}^{r-1}$ where each $\lambda^j$ is a partition of some nonnegative integer $n_j$ such that $\sum_{j=0}^{r-1} n_j=n$. For any cycle in the cycle notation of $(\omega,\tau)\in \mathfrak{S}_{n,r}$, its \emph{length} is the number of elements in it, and its \emph{color} is the sum of the colors that appear (as an element in $\mathbb{Z}_r$). The \emph{cycle type} of $(\omega,\tau)\in \mathfrak{S}_{n,r}$ is the $r$-partition $\rpart$ where $\lambda^j$ records the cycle lengths for the cycles with color $j$. 

\begin{example}
    The permutation from \cref{perm_example_1c} has a single cycle of length $8$ with color $7\equiv 1\bmod 3$. Hence, its cycle type is $\rpart=(\emptyset,(8),\emptyset)$.
\end{example}

\begin{example}
    For a larger example, consider the following colored permutation in $\mathfrak{S}_{9,3}$:
    \[(\omega,\tau)=(1^0 3^2 7^1 6^0 )(2^1)(4^2 5^0)(8^0)(9^1).\]
    Since $r=3$, the cycle type of this colored permutation is 
    \[\rpart= (\lambda^{0},\lambda^{1},\lambda^{2})=((1,4),(1^2),(2)),\]
    where each partition has been expressed in multiplicative notion $(1^{a_1},2^{a_2},\ldots,n^{a_n})$.
\end{example}

As in $\mathfrak{S}_n$, the conjugacy classes of $\mathfrak{S}_{n,r}$ are determined by cycle type. 

\begin{proposition}\cite[Theorem~4.2.8 \and Lemmas~4.2.9-4.2.10]{JamesKerber1981} \label{cycletype}
    Two elements $(\omega,\tau),(\omega',\tau')\in \mathfrak{S}_{n,r}$ are conjugate if and only if they share the same cycle type.  Hence, each conjugacy class of $\mathfrak{S}_{n,r}$ is indexed by an $r$-partition. 
\end{proposition}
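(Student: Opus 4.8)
The plan is to analyze the conjugation action of $\mathfrak{S}_{n,r}$ on itself directly and track its effect on cycle lengths and cycle colors. First I would record the inverse formula $(\sigma,\rho)^{-1}=(\sigma^{-1},-(\rho\circ\sigma^{-1}))$, which follows immediately from the stated group operation, and then expand the conjugate $(\sigma,\rho)(\omega,\tau)(\sigma,\rho)^{-1}$ by applying that operation twice. The outcome I expect is a colored permutation with underlying permutation $\sigma\omega\sigma^{-1}$ and coloring
\[\tau'(i)=\rho(\omega(\sigma^{-1}(i)))+\tau(\sigma^{-1}(i))-\rho(\sigma^{-1}(i)),\]
so conjugation acts on the underlying permutation by the usual $\mathfrak{S}_n$-conjugation while modifying the colors by a coboundary-type term.

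For the forward direction, I would first note that the underlying permutation $\sigma\omega\sigma^{-1}$ immediately preserves all cycle lengths, carrying a cycle $(i_1,\ldots,i_\ell)$ of $\omega$ to the cycle $(\sigma(i_1),\ldots,\sigma(i_\ell))$. It then remains to check that the color of a cycle is preserved. Summing the displayed formula for $\tau'$ over the image cycle gives $\sum_j\rho(\omega(i_j))+\sum_j\tau(i_j)-\sum_j\rho(i_j)$; the first and third sums run over the same set of indices and cancel, leaving $\sum_j\tau'(\sigma(i_j))=\sum_j\tau(i_j)$. Hence each cycle keeps its color, and conjugate elements share the same cycle type. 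Since the length and color of a cycle do not depend on where one begins reading it, this is well defined despite the nonuniqueness of cycle notation.

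The converse is the substantive step and where I expect the main difficulty to lie. Suppose $(\omega,\tau)$ and $(\omega',\tau')$ have the same cycle type $\rpart$. Then there is a length- and color-preserving bijection between their cycles, and I would build the conjugating element $(\sigma,\rho)$ cycle by cycle on the disjoint supports. On a matched pair of cycles, $\sigma$ is chosen to carry one onto the other exactly as in the $\mathfrak{S}_n$ case; the remaining constraint on $\rho$ reduces to the recurrence $\rho(\omega(i_j))-\rho(i_j)=\tau'(\sigma(i_j))-\tau(i_j)$ along the cycle. Fixing the value of $\rho$ at one point of the cycle and propagating around determines $\rho$ uniquely, and the only possible obstruction, namely returning to a consistent value after a full loop, is precisely the telescoped condition that the source and target cycles have equal color. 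The crux is thus verifying that this cyclic consistency is exactly the equality of cycle colors, mirroring the cancellation seen in the forward direction.

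Finally, I would assemble the local data into global $\sigma$ and $\rho$, conclude that equal cycle type implies conjugacy, and observe that the cycle types arising in $\mathfrak{S}_{n,r}$ are precisely the $r$-partitions of $n$; hence the conjugacy classes are indexed by $r$-partitions as claimed.
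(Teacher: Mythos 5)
The paper does not prove this statement at all; it is imported as a citation to James and Kerber's treatment of wreath products, so your proposal is supplying an argument where the paper supplies none. Your argument is correct and self-contained. The inverse formula $(\sigma,\rho)^{-1}=(\sigma^{-1},-(\rho\circ\sigma^{-1}))$ and the conjugate coloring $\tau'(i)=\rho(\omega(\sigma^{-1}(i)))+\tau(\sigma^{-1}(i))-\rho(\sigma^{-1}(i))$ both follow directly from the stated group law, and your telescoping observation is exactly the right mechanism: the coboundary term $\rho\circ\omega-\rho$ sums to zero around each cycle of $\omega$, so cycle colors are conjugation invariants, and conversely the only obstruction to solving the recurrence $\rho(\omega(i_j))-\rho(i_j)=\tau'(\sigma(i_j))-\tau(i_j)$ around a cycle is the vanishing of that same telescoped sum, which is precisely equality of cycle colors. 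One consistency check worth noting: specializing your formula to $\rho=\mathbf{0}$ recovers the paper's \cref{lem:conjugation} (conjugation by $(\pi,\mathbf{0})$ relabels cycle entries while leaving colors in place), so your computation is compatible with the machinery the paper actually uses downstream. The only points you gloss over are minor and harmless: that cycle color is well defined independently of where one starts reading the cycle (you note this), and that every $r$-partition of $n$ is actually realized as a cycle type (immediate by construction). What your approach buys over the citation is an elementary, two-page verification tailored to the paper's conventions for $\mathfrak{S}_{n,r}$, at the cost of not situating the result in the general theory of conjugacy in wreath products $G\wr\mathfrak{S}_n$, where the invariant is the multiset of conjugacy classes in $G$ of the cycle products; for $G=\mathbb{Z}_r$ abelian that invariant collapses to your cycle color.
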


Throughout, we use $C_{\rpart}$ for the conjugacy class corresponding to colored permutations with cycle type $\rpart$. For a statistic $X$ on $\mathfrak{S}_{n,r}$, we can restrict $X$ to $C_{\rpart}$ and equip $C_{\rpart}$ with the uniform distribution to consider $X$ as a random variable. $X$ then has a discrete probability distribution given by 
\[\pr_{C_{\rpart}}[X=i]=|X^{-1}(i)\cap C_{\rpart}|/|C_{\rpart}|.\]
Note that this is equivalent to the conditional distribution $\pr_{\mathfrak{S}_{n,r}}[X=i\mid C_{\rpart}]$, and the above notation is introduced for brevity. We will also sometimes consider more general sets $\Omega\subseteq \mathfrak{S}_{n,r}$, and we similarly use the notation $\pr_{\Omega}[X=i]$ for the distribution of $X$ on $\Omega$ equipped with the uniform distribution. 

\subsection{Statistics on conjugacy classes with sufficiently long cycles}\label{sec:preliminaries3}

\cite{GRWCColoredPermutationStatistics} analyzes moments of statistics on conjugacy classes of $\mathfrak{S}_{n,r}$ with sufficiently long cycles. We will describe the parts of this work relevant to our results and refer the reader to \cite{GRWCColoredPermutationStatistics} for a detailed account. See also \cite{hamaker2022characters} for further details specific to the symmetric group $\mathfrak{S}_n$. 

A \emph{partial colored permutation} on $\mathfrak{S}_{n,r}$ is a pair $(K,\kappa)$ where $K=\{(i_h,j_h)\}_{h=1}^m$ consists of distinct ordered pairs of elements in $[n]$ and $\kappa:\{i_1,\ldots,i_m\}\to \mathbb{Z}_r$ is any function, which we can represent as ordered pairs $\{(i_h,\kappa(i_h))\}_{h=1}^m$. We call $m$ the \emph{size} of $(K,\kappa)$, and also denote this as $|(K,\kappa)|$. For brevity, we will also express $(K,\kappa)$ using a single set of ordered pairs of elements in $[n]^r$ as \[(K,\kappa)=\left\{\left(i_h^0,j_h^{\kappa(i_h)}\right)\right\}_{h=1}^m.\]
Indeed, the correspondence between these notations is clear. 

\begin{remark}
        In \cite{GRWCColoredPermutationStatistics}, the authors use the convention that $(\omega,\tau)(i^0)=\omega(i)^{\tau(\omega(i))}$ rather than $(\omega,\tau)(i^0)=\omega(i)^{\tau(i)}$. Our alternative convention in this paper simplifies our proofs significantly. We have adapted  \cite{GRWCColoredPermutationStatistics} appropriately to account for this differing convention. In particular, \cite{GRWCColoredPermutationStatistics} uses the convention that $\kappa$ is a function on $\{j_1,\ldots,j_m\}$, while we instead define $\kappa$ on $\{i_1,\ldots,i_m\}$.
\end{remark}

A permutation $\omega\in \mathfrak{S}_n$ \emph{satisfies} $K$ if $\omega(i_h)=j_h$ for all $h\in [m]$. A coloring $\tau:[n]\to \mathbb{Z}_r$ \emph{satisfies} $\kappa$ if $\tau(i_h)=\kappa(i_h)$ for all $h\in [m]$. A colored permutation $(\omega,\tau)\in \mathfrak{S}_{n,r}$ \emph{satisfies} $(K,\kappa)$ if $\omega$ satisfies $K$ and $\tau$ satisfies $\kappa$. Viewing $(\omega,\tau)$ as a bijection on $[n]^r$, this is equivalent to $(\omega,\tau)$ mapping $i_h^0$ to $j_h^{\kappa(i_h)}$ for all $h\in [m]$. We use $I_{(K,\kappa)}:\mathfrak{S}_{n,r}\to \{0,1\}$ to denote the indicator function for a colored permutation satisfying $(K,\kappa)$. In general, the probability of satisfying $(K,\kappa)$ on a conjugacy class $C_{\rpart}\subseteq \mathfrak{S}_{n,r}$ can be difficult to compute. However, when $C_{\rpart}$ has all cycles of sufficiently long length, this probability is well-understood. 

    \begin{lemma}\cite[Lemma 3.11 and Corollary 3.12]{GRWCColoredPermutationStatistics}\label{lem:satisfies}
        Let $(K,\kappa)$ be a partial colored permutation statistic on $\mathfrak{S}_{n,r}$ of size $m$, and let $C_{\rpart}$ be a conjugacy class of $\mathfrak{S}_{n,r}$ without cycles of lengths $1,2,\ldots,m$. Then
        \[\pr_{C_{\rpart}}[\omega \text{ satisfies } K]=\frac{1}{(n-1)(n-2)\cdots (n-m)},\]
        \[\pr_{C_{\rpart}}[\tau \text{ satisfies }\kappa]= \frac{1}{r^m},\]
        \[\pr_{C_{\rpart}}[(\omega,\tau) \text{ satisfies } (K,\kappa)]=\frac{1}{(n-1)(n-2)\cdots (n-m)}\cdot \frac{1}{r^m}.\]
    In particular, satisfying $K$ and $\kappa$ are independent. 
    \end{lemma}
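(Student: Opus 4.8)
The plan is to split the computation into an uncolored part, carried out in $\mathfrak{S}_n$, and a coloring part that decouples from it. Let $(\omega,\tau)$ be uniform on $C_{\rpart}$, and let $\lambda\vdash n$ be the ordinary cycle type of $\omega$, i.e.\ the multiset union of the parts of $\lambda^0,\dots,\lambda^{r-1}$; by hypothesis every cycle of $\omega$ has length exceeding $m$. The first step is to show that the marginal law of $\omega$ is uniform on the ordinary conjugacy class $C_\lambda\subseteq\mathfrak{S}_n$. Fixing $\omega\in C_\lambda$ with cycles $c_1,\dots,c_k$, a coloring $\tau$ places $(\omega,\tau)$ in $C_{\rpart}$ exactly when the induced assignment of color-sums to the cycles realizes $\rpart$. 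The number of such $\tau$ factors as $A(\lambda)\,r^{\,n-k}$, where $A(\lambda)$ counts the admissible cycle-color assignments (and depends only on $\lambda$) and each chosen cycle has $r^{\ell_t-1}$ colorings of its prescribed sum. Since this count is constant over $\omega\in C_\lambda$, the marginal of $\omega$ is uniform on $C_\lambda$.

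The second step proves the coloring formula and the independence at once, by computing the conditional law of the marked colors given $\omega$. Condition on $\omega\in C_\lambda$ and on an admissible cycle-color assignment; then each cycle $c_t$ is colored uniformly among colorings with the prescribed sum. The key point is that $\ell_t>m$ for every cycle, so a cycle contains at most $m<\ell_t$ of the marked positions $i_1,\dots,i_m$; restricting a uniform fixed-sum coloring of $c_t$ to a proper subset of its elements gives independent uniform colors, since the remaining (at least one) coordinate absorbs the sum constraint. Hence, conditionally on $\omega$, the colors $\tau(i_1),\dots,\tau(i_m)$ are i.i.d.\ uniform on $\mathbb{Z}_r$, so $\pr_{C_{\rpart}}[\tau\text{ satisfies }\kappa\mid\omega]=r^{-m}$ for \emph{every} $\omega$. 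Averaging over $\omega$ yields $\pr_{C_{\rpart}}[\tau\text{ satisfies }\kappa]=r^{-m}$; and because this conditional probability is independent of $\omega$, the events $\{\tau\text{ satisfies }\kappa\}$ and $\{\omega\text{ satisfies }K\}$ are independent, giving the product formula for the joint probability. Everything then reduces to the uncolored quantity $\pr_{C_\lambda}[\omega\text{ satisfies }K]$.

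The third step, which I expect to be the main obstacle, is the purely symmetric-group identity
\[
\pr_{C_\lambda}[\omega\text{ satisfies }K]=\frac{1}{(n-1)(n-2)\cdots(n-m)},
\]
for a size-$m$ partial permutation $K$ extendable to $C_\lambda$; since $m$ is below every cycle length, any $K$ satisfied by some element of $C_\lambda$ is automatically extendable (its functional graph is acyclic, a linear forest), while a non-extendable $K$ is satisfied by nobody. Equivalently, $N(K)=|\{\omega\in C_\lambda:\omega\text{ satisfies }K\}|$ should depend only on $m$, not on the shape of the forced forest. I would prove this by a chain-rule induction on $m$: deleting an edge $(a,b)$ of $K$ whose head $b$ is a path-end yields a size-$(m{-}1)$ partial permutation $K_0$, and by induction the probability of satisfying $K_0$ is $\prod_{i=1}^{m-1}(n-i)^{-1}$, so it suffices to show $\pr_{C_\lambda}[\omega(a)=b\mid \omega\text{ satisfies }K_0]=(n-m)^{-1}$. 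One checks that exactly $n-m$ values are admissible images of $a$: all of $[n]$ except the $m-1$ images already used by $K_0$ and one further value (namely $a$ itself, or the start of the $K_0$-path ending at $a$) whose use would create a fixed point or short cycle. The real content is therefore that $\omega(a)$ is \emph{uniform} over these admissible images.

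The delicate point is precisely this uniformity, equivalently the shape-independence of $N(K)$. Conjugation by $\mathfrak{S}_n$ acts transitively on the admissible images lying \emph{outside} the vertex set of $K_0$, settling those immediately, but it fixes any admissible image that already occurs as a source of $K_0$, so a naive symmetry argument cannot equate the two kinds of images. To close this gap I would pass to a direct enumeration: contract each forced path of $K$ to a single block and count the arrangements of the resulting $n-m$ blocks-and-free-points into cycles whose total sizes form $\lambda$. For a single cycle $\lambda=(n)$ this count is simply $(n-m-1)!$, manifestly independent of the block sizes; for general $\lambda$ one sums over the distribution of blocks among the cycles and verifies, through the resulting binomial identity, that the block sizes cancel, so $N(K)$ collapses to a quantity depending only on $m$. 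Evaluating on the convenient single-path representative $K=\{(1,2),(2,3),\dots,(m,m+1)\}$ then produces the stated product. This shape-independence is the technical heart, mirroring Fulman's original argument on $\mathfrak{S}_n$, which the present lemma extends to the colored setting.
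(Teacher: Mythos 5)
You are proving a statement that the paper itself does not prove: \cref{lem:satisfies} is imported verbatim from the cited source, so there is no internal argument to compare against. Your architecture is sound, and the first two steps are complete and correct. The number of colorings $\tau$ with $(\omega,\tau)\in C_{\rpart}$ indeed factors as $A(\lambda)\,r^{\,n-k}$ independently of the choice of $\omega$ in the ordinary class $C_\lambda$, so the marginal of $\omega$ is uniform on $C_\lambda$; and since every cycle has length exceeding $m$, each cycle contains strictly fewer marked positions than elements, so the restriction of a uniform fixed-sum coloring of a cycle to its marked positions is uniform. This gives $\pr[\tau\text{ satisfies }\kappa\mid\omega]=r^{-m}$ for \emph{every} $\omega$, which yields the second displayed formula, the independence claim, and the reduction of the joint probability to the uncolored count, all correctly. (One small caveat: as you implicitly note, the formulas can only hold for $K$ that is a satisfiable partial injection; for non-extendable $K$ the probability is $0$.)

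The genuine gap is in your third step, exactly where you flag it. The shape-independence of $N(K)=|\{\omega\in C_\lambda:\omega\text{ satisfies }K\}|$ --- equivalently, the uniformity of $\omega(a)$ over the $n-m$ admissible images conditional on satisfying $K_0$ --- is the entire content of the uncolored identity, and your proposal replaces its proof with the assertion that, after contracting forced paths to blocks, ``the block sizes cancel through the resulting binomial identity.'' That identity is precisely what must be proved: writing $N(K)$ as a sum of $\prod_c (q_c-1)!$ over assignments of the $n-m$ blocks and free points to the parts of $\lambda$ subject to the weight constraints, its independence from the block sizes $s_1,\dots,s_p$ (with $\sum_i(s_i-1)=m$ fixed) is manifest only for $\lambda=(n)$. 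Your own framing exposes the obstruction: conjugation by permutations fixing $V(K_0)\cup\{a\}$ handles admissible images outside that set, but an admissible image that is the start of another forced path is not reachable by such a symmetry, and equating $N(K_0\cup\{(a,b)\})$ for $b$ a path-start with the count for $b$ fresh is the step that neither the symmetry argument nor the unexecuted enumeration supplies. The claim is true --- for instance with $\lambda=(3,3)$, $n=6$, both $K=\{(1,2),(2,3)\}$ and $K=\{(1,2),(3,4)\}$ give $N(K)=2=|C_\lambda|/(5\cdot 4)$ --- but until the cancellation is actually carried out (or the argument of the cited Lemma 3.11 is reproduced), the first and third displayed formulas remain unestablished.
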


One key insight of \cite{GRWCColoredPermutationStatistics} is that the indicator functions $I_{(K,\kappa)}$ for a partial colored permutations can be viewed as building blocks for colored permutation statistics. Formally, a colored permutation statistic $X:\mathfrak{S}_{n,r}\to \mathbb{R}$ has \emph{degree} $m$ if it is in the $\mathbb{R}$-vector space spanned by $\{I_{(K,\kappa)}: |(K,\kappa)|\leq m\}$ and not in the $\mathbb{R}$-vector space spanned by $\{I_{(K,\kappa)}: |(K,\kappa)|\leq m-1\}$. We give examples below using the statistics relevant to this paper. 

\begin{example}\label{ex:des-maj-fmaj}
    The descent, major index, and flag major index statistics on $\mathfrak{S}_{n,r}$ have degrees at most $2$, as
    \[\des_{n,r}=\sum_{i=1}^{n-1}\sum_{j_1^{c_1} < j_2^{c_2}} I_{\{(i^0,\,j_2^{c_2}),((i+1)^0,\,j_1^{c_1})\}} + \sum_{j=1}^n \sum_{c=1}^{r-1} I_{\{(n^0,\,j^c)\}},\]
    \[\maj_{n,r}=\sum_{i=1}^{n-1}\sum_{j_1^{c_1}< j_2^{c_2}} i\cdot  I_{\{(i^0,\,j_2^{c_2}),((i+1)^0,\,j_1^{c_1})\}},\]
    \[\fmaj_{n,r}=r\cdot \sum_{i=1}^{n-1}\sum_{j_1^{c_1} < j_2^{c_2}} i\cdot  I_{\{(i^0,\,j_2^{c_2}),((i+1)^0,\,j_1^{c_1})\}} + \sum_{i=1}^n \sum_{j=1}^n \sum_{c=0}^{r-1} c\cdot I_{\{(i^0,\,j^c)\}}.\]
    Note that the condition $j_1^{c_1}<j_2^{c_2}$ is with respect to the total order given in \eqref{eq:descentordering}. One can show that for $n\geq 3$, these statistics have degrees exactly $2$, e.g., see the approach in \cite[Theorem 4.20]{GRWCColoredPermutationStatistics}. However, we will not need this exact value for their degrees.
\end{example}

It is clear that if two statistics have degree at most $m_1$ and $m_2$, respectively, then their sum has degree at most $\max\{m_1,m_2\}$. The corresponding property for products is given below. 

\begin{lemma}\cite[Corollary 3.16]{GRWCColoredPermutationStatistics}\label{lem:HRLem4.2}
    Suppose $X_{1}$ and $X_{2}$ are statistics on $\mathfrak{S}_{n,r}$ with degree at most $m_1$ and $m_2$, respectively. Then $X_1\cdot X_2$ has degree at most $m_1+m_2$. In particular, for any integer $k\geq 1$ such that $m_1k\leq n$, we have that $X_1^k$ has degree at most $km_1$. 
\end{lemma}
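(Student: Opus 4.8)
The plan is to reduce everything to a single computation with indicator functions and then invoke linearity. First I would use the definition of degree to write $X_1=\sum_{\alpha} a_\alpha\, I_{(K_\alpha,\kappa_\alpha)}$ and $X_2=\sum_{\beta} b_\beta\, I_{(L_\beta,\ell_\beta)}$, where every $(K_\alpha,\kappa_\alpha)$ has size at most $m_1$ and every $(L_\beta,\ell_\beta)$ has size at most $m_2$. Expanding the product yields
\[
X_1 X_2=\sum_{\alpha,\beta} a_\alpha b_\beta\, I_{(K_\alpha,\kappa_\alpha)}\, I_{(L_\beta,\ell_\beta)},
\]
so by linearity it suffices to show that each product $I_{(K,\kappa)}\, I_{(L,\ell)}$ of two indicator functions lies in the span of $\{I_{(M,\mu)}:|(M,\mu)|\leq m_1+m_2\}$.

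The key observation is that since both factors are $\{0,1\}$-valued, their product is the indicator of the event that a colored permutation $(\omega,\tau)$ satisfies both partial colored permutations simultaneously. I would merge the two data into $(K\cup L,\kappa\cup \ell)$, taking the union of colorings as the union of their graphs, and split into two cases. If the merged data is inconsistent---meaning either the combined assignment on $[n]$ fails to be an injective partial map (some index is sent to two distinct targets, or two distinct indices are sent to a common target), or the two colorings disagree on a shared index---then no colored permutation can satisfy both, so the product is the zero function, which lies in every subspace. Otherwise $(K\cup L,\kappa\cup \ell)$ is itself a partial colored permutation and $I_{(K,\kappa)}\,I_{(L,\ell)}=I_{(K\cup L,\kappa\cup \ell)}$. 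In either case the result is a scalar multiple of a single indicator whose size is $|K\cup L|\leq |K|+|L|\leq m_1+m_2$, which establishes the first claim.

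For the \emph{in particular} statement, I would induct on $k$. The base case $k=1$ is immediate. Assuming $X_1^{j}$ has degree at most $j m_1$, writing $X_1^{j+1}=X_1^{j}\cdot X_1$ and applying the product bound gives degree at most $j m_1+m_1=(j+1)m_1$; the hypothesis $m_1 k\leq n$ guarantees that each intermediate bound $j m_1$ stays at most $n$, so the degree statement remains meaningful throughout, since any indicator of size exceeding $n$ is identically zero (by pigeonhole its underlying assignment cannot be an injective partial map).

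I do not anticipate a genuine obstacle here: the entire content is the clean identity that a product of two $\{0,1\}$-valued indicators is the indicator of the intersection of the underlying events, together with the bookkeeping that merging partial colored permutations of sizes $m_1$ and $m_2$ produces one of size at most $m_1+m_2$. The only place demanding care is the enumeration of the ways in which the merged constraints can be inconsistent---both the injectivity conditions on the underlying permutation and the compatibility of the two colorings---since each such inconsistency must be verified to force the product to vanish.
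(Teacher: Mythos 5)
The paper does not actually prove this statement—it is imported by citation from \cite[Corollary 3.16]{GRWCColoredPermutationStatistics}—so there is no internal proof to compare against. Your argument is correct and is the standard one: expand each statistic in the spanning set of indicators, observe that a product of two indicators of partial colored permutations is either identically zero (when the merged constraints violate injectivity or the colorings conflict on a shared index) or the indicator of the merged partial colored permutation of size at most $|K|+|L|\leq m_1+m_2$, and then induct for the power statement. Your closing remark correctly identifies the role of the hypothesis $m_1k\leq n$: it ensures the intermediate merged constraints can still be realized as genuine partial colored permutations on $[n]$ rather than collapsing to the zero function for size reasons.
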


\cref{lem:satisfies} with \cref{lem:HRLem4.2} can then be used to show the following result. Note that we will primarily be interested in the application of these results to $\des_{n,r}$, $\maj_{n,r}$, and $\fmaj_{n,r}$, and \cref{ex:des-maj-fmaj} showed that these have degree at most $2$. 

\begin{theorem}\cite[Theorem~1.1]{GRWCColoredPermutationStatistics} \label{thm:mainindependence}
Suppose $X:\mathfrak{S}_{n,r}\to \mathbb{R}$ has degree at most $m$. For any $k\geq 1$, the $k$th moment $\E_{C_{\rpart}}[X^k]$ coincides on all conjugacy classes $C_{\rpart}$ of $\mathfrak{S}_{n,r}$ without cycles of lengths $1,2,\ldots,mk$.  
\end{theorem}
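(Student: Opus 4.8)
The plan is to reduce everything to the building-block indicator functions $I_{(K,\kappa)}$ and then exploit linearity of expectation, so that the only substantive input becomes the uniform probability formula of \cref{lem:satisfies}. Since $X$ has degree at most $m$, by definition it lies in the span of $\{I_{(K,\kappa)}: |(K,\kappa)|\le m\}$. The first step is to pass from $X$ to $X^k$: by \cref{lem:HRLem4.2} the power $X^k$ has degree at most $mk$, so there exist real coefficients $b_{(K,\kappa)}$ with
\[
X^k = \sum_{|(K,\kappa)| \le mk} b_{(K,\kappa)}\, I_{(K,\kappa)}.
\]
The hypothesis $mk\le n$ needed to invoke \cref{lem:HRLem4.2} is automatic in the only nonvacuous case: if $mk\ge n$, then every colored permutation has cycle lengths summing to $n$ and hence contains a cycle of length at most $n\le mk$, so no conjugacy class avoids cycles of lengths $1,\ldots,mk$ and the claim holds vacuously.

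The second step uses linearity of expectation together with the identity $\E_{C_{\rpart}}[I_{(K,\kappa)}] = \pr_{C_{\rpart}}[(\omega,\tau)\text{ satisfies }(K,\kappa)]$. Fix any conjugacy class $C_{\rpart}$ with no cycles of lengths $1,2,\ldots,mk$. Every $(K,\kappa)$ appearing above has size $s:=|(K,\kappa)|\le mk$, so $C_{\rpart}$ in particular has no cycles of lengths $1,\ldots,s$, and \cref{lem:satisfies} yields
\[
\E_{C_{\rpart}}[I_{(K,\kappa)}] = \frac{1}{(n-1)(n-2)\cdots (n-s)}\cdot \frac{1}{r^{s}}.
\]
The crucial point is that this value depends only on the size $s$ and the fixed parameters $n,r$; it is \emph{completely independent of the cycle type} $\rpart$. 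This is precisely where the no-short-cycle hypothesis does its work: on such classes the indicator moments no longer see the cycle structure.

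Combining the two steps gives
\[
\E_{C_{\rpart}}[X^k] = \sum_{s=0}^{mk}\Biggl(\sum_{|(K,\kappa)|=s} b_{(K,\kappa)}\Biggr)\frac{1}{(n-1)\cdots (n-s)}\cdot \frac{1}{r^{s}},
\]
and the right-hand side is manifestly independent of $\rpart$. Hence $\E_{C_{\rpart}}[X^k]$ takes the same value on every conjugacy class without cycles of lengths $1,\ldots,mk$, as claimed; comparing any two such classes forces the common value.

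The genuine content lives entirely in the two assumed lemmas: \cref{lem:HRLem4.2} guarantees the degree bound survives taking $k$-th powers, and \cref{lem:satisfies} supplies the $\rpart$-independence of the building-block moments. Given these, the theorem is a short assembly via linearity. The main obstacle is therefore not the final deduction but justifying that one may legitimately decompose $X^k$ over the $I_{(K,\kappa)}$ with size bound exactly $mk$ — that is, controlling the degree of a product of statistics — which is exactly what \cref{lem:HRLem4.2} provides; once the size bound matches the excluded cycle lengths, conjugacy-class independence is immediate.
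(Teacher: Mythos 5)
Your proposal is correct and takes essentially the same route the paper indicates: the paper cites this as \cite[Theorem~1.1]{GRWCColoredPermutationStatistics} and notes only that it follows by combining \cref{lem:satisfies} with \cref{lem:HRLem4.2}, which is precisely the assembly you carry out (expand $X^k$ over indicators of size at most $mk$, observe via \cref{lem:satisfies} that each indicator's expectation on $C_{\rpart}$ depends only on its size, and conclude by linearity). Your handling of the $mk\geq n$ case as vacuous is a reasonable way to dispose of the hypothesis $mk\leq n$ in \cref{lem:HRLem4.2}.
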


\section{Descents}\label{sec:descents}

In this section, we prove \cref{thm:1} and \cref{cor:1} for $\des_{n,r}$. Our methods also apply to $\maj_{n,r}$, so we include results for this statistic as well. Throughout, we define $X_i$ to be the indicator function for a descent at position $i$, 
\[X_i(\omega,\tau)=\begin{cases}
1 & \text{ if $i\in \Des_{n,r}(\omega,\tau)$} \\0 & \text{ otherwise.}
\end{cases}\]
The descent and major index statistics can then be expressed as
\[\des_{n,r}=\sum_{i=1}^n X_i \text{ \quad  and \quad }\maj_{n,r} = \sum_{i=1}^{n-1}i\cdot X_i.\]

Observe that the above decompositions also allow us to decompose the $k$-th powers of the descent and major index statistics in terms of $X_1,\ldots,X_n$ as
\begin{equation*}
    \des_{n,r}^k = \sum_{a_1,\ldots,a_k\in [n]} X_{a_1}\cdots X_{a_k} \text{ and } \maj_{n,r}^k = \sum_{a_1,\cdots,a_k\in [n-1]} a_1\cdots a_k X_{a_1}\cdots X_{a_k}.
\end{equation*}
Note that the $a_1,\ldots,a_k$ need not be distinct. Since expectation is linear, an understanding of the mean of $X_{a_1}\cdots X_{a_k}$ on $\mathfrak{S}_{n,r}$ or $C_{\rpart}$ informs us of the $k$-th moments of $\des_{n,r}$ and $\maj_{n,r}$ on these sets. 

\subsection{Moments on colored permutation groups}

We begin by deriving explicit formulas for the expectation of $X_{a_1}\ldots X_{a_k}$ on $\mathfrak{S}_{n,r}$. We start with the following definitions based on \cite{FulmanJCTA1998}. Our modifications account for the possibility of a descent at position $n$ in $\mathfrak{S}_{n,r}$, which cannot occur in $\mathfrak{S}_n$.

\begin{definition}
The \emph{Young subgroup} generated by $a_1,\ldots,a_k\in [n]$ is the subgroup of $\mathfrak{S}_n$ generated by the adjacent transpositions 
\[\{(a_i,a_i+1):a_i\in \{a_1,\ldots,a_n\}\setminus \{n\} \} .\]
\end{definition}

\begin{definition}
    Let $J$ be the Young subgroup of $\mathfrak{S}_n$ generated by $a_1,\ldots,a_k\in [n]$. The \emph{blocks} induced by $a_1,\ldots,a_k\in [n]$ are the equivalence classes $\B_1,\ldots,\B_t\subseteq [n]$ generated by the following property: $i,j\in [n]$ are in the same equivalence class if some $\omega\in J$ maps $i$ to $j$. Observe that one can equivalently express \[J=\mathfrak{S}_{\B_1}\times \cdots \times \mathfrak{S}_{\B_t},\] where $\mathfrak{S}_{\B_i}$ is the group of permutations on the elements in $\B_i$. 
\end{definition}

\begin{example}
    The blocks induced by $1,2,4,7\in [8]$ are $\B_1=\{1,2,3\}$, $\B_2=\{4,5\}$, $\B_3=\{6\}$, and $\B_4=\{7,8\}$. Note that the blocks induced by $1,2,4,7,8\in [8]$ will be the same. 
\end{example}

Fulman shows in \cite[Proof of Theorem 3]{FulmanJCTA1998}  that when the blocks induced by $a_1,\ldots,a_k\in [n-1]$ are $\B_1,\ldots,\B_t$, 
\begin{equation}\label{eq:fulman_expectation}
    \E_{\mathfrak{S}_{n}}[X_{a_1} X_{a_2}\cdots X_{a_k}]=\prod_{i=1}^t \frac{1}{|\B_i|!}.
\end{equation}
In $\mathfrak{S}_{n,r}$, we will derive the corresponding formulas for $\E_{\mathfrak{S}_{n, r}}[X_{a_1} X_{a_2}\cdots X_{a_k}]$, and there will be two cases depending on whether or not $a_1,\ldots,a_k$ contains $n$. When $a_1,\ldots,a_k$ does not contain $n$, we show that \cref{eq:fulman_expectation} translates directly to $\mathfrak{S}_{n,r}$.

\begin{lemma}\label{prop:e_descents_no_n}
    Let $a_1,\ldots,a_k\in [n-1]$ with induced blocks $\B_1,\ldots,\B_t$. Then
    \begin{equation}
        \E_{\mathfrak{S}_{n,r}}[X_{a_1} X_{a_2}\cdots X_{a_k}] =\E_{\mathfrak{S}_n}[X_{a_1}X_{a_2}\cdots X_{a_k}]  =\prod_{i=1}^t \frac{1}{|\B_i|!}.
    \end{equation}
\end{lemma}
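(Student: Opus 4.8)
The plan is to reduce to distinct indices, recast the event in terms of the total order \eqref{eq:descentordering}, and then exploit an exchangeability-plus-independence structure governed by the blocks, ultimately matching Fulman's formula \eqref{eq:fulman_expectation}. First I would note that each $X_i$ is idempotent, so $X_{a_1}\cdots X_{a_k}=\prod_{i\in S}X_i$ where $S=\{a_1,\ldots,a_k\}\subseteq[n-1]$ is the underlying set of distinct indices (which induces the same blocks $\B_1,\ldots,\B_t$ as the multiset). Hence $\E_{\mathfrak{S}_{n,r}}[X_{a_1}\cdots X_{a_k}]=\pr_{\mathfrak{S}_{n,r}}[\text{every }i\in S\text{ is a descent}]$. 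Writing $v_i=\omega(i)^{\tau(i)}$ and using \eqref{eq:descentordering}, position $i$ is a descent iff $v_i>v_{i+1}$; because $S\subseteq[n-1]$, the boundary convention and the color-at-$n$ contribution never enter, which is exactly what the hypothesis is designed to avoid.

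Next I would decompose the event along the blocks. Since the Young subgroup is generated by the adjacent transpositions $(a_j,a_j+1)$, each block $\B_\ell$ is an interval $\{b_\ell,\ldots,b_\ell+s_\ell-1\}$ whose internal positions $b_\ell,\ldots,b_\ell+s_\ell-2$ are precisely its elements lying in $S$, while its endpoints impose no constraint linking it to a neighboring block. Therefore the event $\{X_i=1\text{ for all }i\in S\}$ factors as the intersection, over $\ell$, of the events ``$v_{b_\ell}>v_{b_\ell+1}>\cdots>v_{b_\ell+s_\ell-1}$,'' i.e.\ the colored values are strictly decreasing across each block.

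For a single block of size $s$, sampling uniformly from $\mathfrak{S}_{n,r}$ amounts to sampling $\omega$ uniform and $\tau$ iid uniform, independently. The pairs $(\tau(i),\omega(i))$ for $i$ in the block, compared lexicographically as in \eqref{eq:descentordering}, are almost surely distinct (the bases $\omega(i)$ are distinct) and form an \emph{exchangeable} sequence: the colors are iid, the relative order of the bases within the block is uniform, and the two are independent. For an exchangeable sequence of distinct elements each of the $s!$ relative orders is equally likely, so the strictly decreasing order occurs with probability $1/s!$. Across blocks, the colors are independent (disjoint positions), and the relative orders of a uniform permutation restricted to disjoint intervals are mutually independent; hence the block events are independent and their probabilities multiply to $\prod_\ell 1/s_\ell!=\prod_{i=1}^t 1/|\B_i|!$, which equals $\E_{\mathfrak{S}_n}[X_{a_1}\cdots X_{a_k}]$ by \eqref{eq:fulman_expectation}.

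The main obstacle is the single-block computation: one must argue that the colored values fall into a uniformly random relative order despite the color coordinate, and the cleanest route is the exchangeability argument above rather than a case analysis over color patterns. A secondary point needing explicit justification is the independence of the block events, which rests on the standard fact that a uniform permutation restricted to disjoint position-intervals yields independent uniform relative orders; the reductions to distinct indices and to intervals whose internal positions lie in $S$ are routine but worth stating carefully.
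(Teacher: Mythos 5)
Your proof is correct, but it takes a genuinely different route from the paper. The paper conditions on the multiset of colored values appearing in the one-line notation: it lets $\mathfrak{S}_n$ act by permuting one-line entries, observes that each orbit $\Omega_c$ admits an order-preserving bijection $F_c$ onto $\mathfrak{S}_n$ that preserves descents at positions $1,\ldots,n-1$, and then invokes Fulman's formula \eqref{eq:fulman_expectation} as a black box via the Law of Total Expectation. You instead compute the probability directly: after reducing to distinct indices and observing that the blocks are intervals whose internal positions are exactly the prescribed descents, you show the colored values within a block form an exchangeable sequence of a.s.\ distinct elements of $[n]^r$ (so the decreasing order has probability $1/|\B_i|!$), and that the block events are mutually independent because the colors are iid and the relative orders of a uniform permutation on disjoint position sets are independent. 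This effectively re-proves \eqref{eq:fulman_expectation} in the colored setting rather than reducing to it; you only need the cited formula to identify the middle term $\E_{\mathfrak{S}_n}[X_{a_1}\cdots X_{a_k}]$. The trade-off: the paper's orbit argument is shorter here and is reused almost verbatim for \cref{lem:E_descents_with_n}, whereas your argument is more self-contained and makes the exchangeability/independence mechanism explicit, at the cost of needing to state carefully the standard facts about relative orders of uniform permutations on disjoint intervals. All the steps you flag as needing justification (distinctness of the $v_i$, block-interval structure, independence across blocks) do go through as you describe.
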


\begin{proof}
    Let $\mathfrak{S}_n$ act on $\mathfrak{S}_{n,r}$ by permuting entries in the one-line notation.   
    This partitions $\mathfrak{S}_{n,r}$ into orbits based on the elements that appear in the one-line notation.
    Each orbit $\Omega_{c}$ can be indexed by $c=(c_1,\ldots,c_n)$, where $c_i\in \mathbb{Z}_r$ is the color of element $i$ in the one-line notation. Let $f_{c}:\{i^{c_i}\}_{i=1}^n \to [n]$ be the unique order-preserving bijection that maps $\{i^{c_i}\}_{i=1}^n$ with the ordering in \eqref{eq:descentordering} to $[n]$ with the usual ordering. 
    
    This induces a bijection $F_{c}:\Omega_{c}\to \mathfrak{S}_n$ that applies $f_c$ on each element in the one-line notation. For example, in the permutation from Example~\ref{perm_example_1a} where $c=(1,0,0,1,2,2,0,1)$, we have that
     \[F_c([3^1  8^0  5^0  6^1 2^2 1^2  4^0  7^1])=[43258716].\] Since $f_c$ is order-preserving, $F_c$ preserves descents at positions $1,2,\ldots,n-1$. Therefore, for all $(\omega,\tau)\in \Omega_c$, we have that \[X_{a_1}X_{a_2}\cdots X_{a_k}(\omega,\tau)=X_{a_1}X_{a_2}\cdots X_{a_k}(F_{c}(\omega,\tau)).\] As $F_c$ is a bijection, this implies that
     \begin{equation}\label{eq:exp_equal}
        \E_{\mathfrak{S}_{n,r}}[X_{a_1}X_{a_2}\cdots X_{a_k}\mid \Omega_c]=\E_{\mathfrak{S}_{n}}[X_{a_1}X_{a_2}\cdots X_{a_k}].
    \end{equation}
    Equation \eqref{eq:exp_equal} holds for every $\Omega_{c}$, so the Law of Total Expectation implies
    \begin{align*}
        \E_{\mathfrak{S}_{n,r}}[X_{a_1} X_{a_2}\cdots X_{a_k}] & = \sum_{c} \pr_{\mathfrak{S}_{n,r}}[\Omega_{c}]\cdot \E_{\mathfrak{S}_{n,r}}[X_{a_1} X_{a_2}\cdots X_{a_k}\mid \Omega_{c}] \\
        & = \sum_{c} \pr_{\mathfrak{S}_{n,r}}[\Omega_{c}]\cdot \E_{\mathfrak{S}_n}[X_{a_1} X_{a_2}\cdots X_{a_k}] \\
        & =\E_{\mathfrak{S}_n} [X_{a_1}X_{a_2}\cdots X_{a_k}].
    \end{align*}
    The result now follows from \eqref{eq:fulman_expectation}.  
\end{proof}

It remains now to consider products involving $X_n$. We start with the case of random variables corresponding to consecutive indices  $X_{m+1}\cdots X_n$ containing $n$. Based on the ordering used to define descents in \eqref{eq:descentordering}, observe that the subset
\[\{(\omega,\tau)\in \mathfrak{S}_{n,r}\ \mid X_{m+1}\cdots X_n(\omega,\tau)=1\}\] 
is equivalent to
\[\{(\omega,\tau)\in \mathfrak{S}_{n,r}\mid X_{m+1}\cdots X_{n-1}(\omega,\tau)=1 \text{ and } \tau(i)\neq 0 \, \forall \, i>m\}.\]
Using this equivalence, we compute the expectation of $X_{m+1}\cdots X_n$ on $\mathfrak{S}_{n,r}$

\begin{lemma}\label{lem:E_descents_with_n}
    For any $1\leq m< n$, the following holds:
    \begin{equation}\label{eq:E_m_through_n}
    \E_{\mathfrak{S}_{n,r}}[X_{m+1}\cdots X_n]= \left(\frac{r-1}{r}\right)^{n-m}\cdot \frac{1}{(n-m)!}.
    \end{equation}
\end{lemma}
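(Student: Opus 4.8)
The plan is to evaluate the expectation directly as a ratio of cardinalities,
$\E_{\mathfrak{S}_{n,r}}[X_{m+1}\cdots X_n] = |E|/|\mathfrak{S}_{n,r}|$, where $E$ is the set of colored permutations having a descent at each of the positions $m+1,\ldots,n$ and $|\mathfrak{S}_{n,r}| = r^n\, n!$. Writing $s = n-m$ and $w_i = (\omega,\tau)(i^0) = \omega(i)^{\tau(i)}$ for the colored image at position $i$, the definition of descent through the total order \eqref{eq:descentordering} says that $(\omega,\tau)\in E$ precisely when $w_{m+1} > w_{m+2} > \cdots > w_n > (n+1)^0$. By the equivalence recorded just before the statement, this is the same as asking that $w_{m+1} > \cdots > w_n$ and that $\tau(i)\neq 0$ for every $i>m$. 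I would also note the self-contained reason this color condition is forced: an element $a^c$ with $a\in[n]$ exceeds $(n+1)^0$ exactly when $c\neq 0$, so the final inequality $w_n > (n+1)^0$ says $\tau(n)\neq 0$, and propagating this up the strictly decreasing chain (where $a^x > b^y$ requires $x\geq y$) shows every color in the tail is nonzero.

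This reduces the problem to a direct count of the colored permutations whose last $s$ colored images $w_{m+1},\ldots,w_n$ form a strictly decreasing sequence of colored values all carrying nonzero colors. The key structural fact I would exploit is that a colored permutation is nothing but a length-$n$ sequence of colored values whose underlying entries form a permutation of $[n]$, so the tail and head can be chosen independently. First I would choose the unordered set $\{w_{m+1},\ldots,w_n\}$: it consists of $s$ colored values with distinct underlying entries in $[n]$ and colors in $\{1,\ldots,r-1\}$, and once this set is fixed the requirement that the tail be decreasing pins down the tuple uniquely. There are $\binom{n}{s}$ ways to pick the underlying entries and $(r-1)^s$ ways to color them, giving $\binom{n}{s}(r-1)^s$ admissible sets. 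For each such choice, positions $1,\ldots,m$ receive the remaining $m = n-s$ underlying entries in an arbitrary order with arbitrary colors, contributing $m!\, r^m$ further possibilities, so that $|E| = \binom{n}{s}(r-1)^s\, m!\, r^m$.

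Dividing by $r^n\, n!$ and using $\binom{n}{s} = \tfrac{n!}{s!\,(n-s)!}$ together with $n-s = m$, the factor $m!$ cancels and the expression collapses to $\tfrac{(r-1)^s}{s!\, r^s} = \left(\tfrac{r-1}{r}\right)^{n-m}\tfrac{1}{(n-m)!}$, which is the claimed value. I expect the only genuinely delicate point to be the translation carried out in the first paragraph: one must correctly handle the boundary descent at position $n$ under the convention $(n+1)^0$, identify it with the condition $\tau(n)\neq 0$, and then argue that a strictly decreasing chain whose last color is nonzero has all colors nonzero — which is exactly the content of the equivalence set up before the statement. Everything after that is elementary bookkeeping, so the whole argument is essentially a single clean enumeration rather than an appeal to the orbit/standardization machinery used in \cref{prop:e_descents_no_n}.
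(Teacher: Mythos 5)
Your proof is correct, and it takes a more direct route than the paper. The paper writes the expectation as a product of two probabilities, $\pr_{\mathfrak{S}_{n,r}}[\tau(i)\neq 0\ \forall i>m]\cdot \pr_{\mathfrak{S}_{n,r}}[X_{m+1}\cdots X_{n-1}=1\mid \tau(i)\neq 0\ \forall i>m]$, computes the first factor as $((r-1)/r)^{n-m}$, and handles the second by letting $\mathfrak{S}_{n-m}$ act on the last $n-m$ positions of the one-line notation, invoking the orbit argument of \cref{prop:e_descents_no_n} to see that each orbit of size $(n-m)!$ contains exactly one element with the tail in decreasing order. You instead count the event set outright: $|E|=\binom{n}{s}(r-1)^s\,m!\,r^m$ with $s=n-m$, divided by $r^n n!$. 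The two arguments rest on the same pair of observations --- the boundary descent at $n$ forces $\tau(n)\neq 0$ and hence, via the decreasing chain, all tail colors nonzero; and a fixed $s$-element subset of $[n]^r$ has a unique decreasing arrangement --- but your version replaces the conditioning-plus-group-action machinery with a single product count, which is cleaner and fully self-contained here. What the paper's framing buys is reusability: the same conditional-probability-plus-action template is what carries over verbatim to \cref{lem:independence} and, crucially, to the conjugacy-class setting in \cref{prop:e_des_conjugacy_class}, where a direct enumeration of the event set is no longer available and one genuinely needs the orbit structure. Your derivation of the color condition (comparing $a^c$ with $(n+1)^0$ and propagating nonzero colors up the chain) is exactly the content of the equivalence the paper records just before the lemma, so no gap there.
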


\begin{proof}
We first express 
\begin{equation*}
    \begin{split}
        & \, \E_{\mathfrak{S}_{n,r}}[X_{m+1}\cdots X_n]\\
        =& \, \pr_{\mathfrak{S}_{n,r}}[X_{m+1}\cdots X_n=1]\\ 
        =& \,\pr_{\mathfrak{S}_{n,r}}[\{ \tau(i)\neq 0 \, \forall \, i>m\} \cap \{X_{m+1}\cdots X_{n-1}=1\}] \\
        =& \,\pr_{\mathfrak{S}_{n,r}}[\tau(i)\neq 0 \,\forall \, i>m]\cdot \pr_{\mathfrak{S}_{n,r}}[X_{m+1}\cdots X_{n-1}=1\mid \tau(i)\neq 0 \,\forall \, i>m].
    \end{split}
\end{equation*}
The first term is equal to $((r-1)/r)^{n-m}$, so it suffices to show the second term is $1/(n-m)!$. For this, we let $\mathfrak{S}_{n-m}$ act on the set $\{(\omega,\tau)\in \mathfrak{S}_{n,r} : \tau(i)\neq 0 \, \forall \, i>m\}$ by permuting the last $n-m$ entries in the one-line notation. Under this action, each orbit has size $(n-m)!$, and exactly one element in each orbit has these last $n-m$ elements in descending order. The same argument as in \cref{prop:e_descents_no_n} shows then that 
\[\pr_{\mathfrak{S}_{n,r}}[X_{m+1}\cdots X_{n-1}=1\mid \tau(i)\neq 0 \,\forall \, i>m]=\frac{1}{(n-m)!}.\qedhere\]
\end{proof}

\begin{corollary}\label{cor:indexshift}
    For any positive integers $m$ and $n$,
    \[\E_{\mathfrak{S}_{n,r}}[X_1X_2\cdots X_n] = \E_{\mathfrak{S}_{m+n,r}}[X_{m+1}X_{m+2}\cdots X_{m+n}].\]
\end{corollary}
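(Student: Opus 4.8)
The plan is to evaluate both sides in closed form using \cref{lem:E_descents_with_n} and observe that they coincide. The key structural fact supplied by that lemma is that the expectation of a product of consecutive descent indicators terminating at the final index depends only on the number of consecutive positions involved, and not on the size of the ambient group. Both expressions in the corollary are of exactly this shape, with $n$ consecutive indicators each, so I expect them to share a single closed form.

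First I would handle the right-hand side. In $\mathfrak{S}_{m+n,r}$, the product $X_{m+1}X_{m+2}\cdots X_{m+n}$ is precisely a product of consecutive descent indicators ending at the last index $m+n$. Matching this against \cref{lem:E_descents_with_n}, I take the ambient group to be $\mathfrak{S}_{m+n,r}$ (so the lemma's ``$n$'' is $m+n$) and the starting offset to be $m$ (so the lemma's ``$m$'' is $m$). Since $m$ and $n$ are positive integers we have $1\leq m < m+n$, so the hypothesis is satisfied and the lemma yields
\[\E_{\mathfrak{S}_{m+n,r}}[X_{m+1}\cdots X_{m+n}] = \left(\frac{r-1}{r}\right)^{(m+n)-m}\cdot \frac{1}{((m+n)-m)!} = \left(\frac{r-1}{r}\right)^{n}\cdot \frac{1}{n!}.\]

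Next I would evaluate the left-hand side $\E_{\mathfrak{S}_{n,r}}[X_1\cdots X_n]$, which is the analogous quantity with offset $m=0$ in $\mathfrak{S}_{n,r}$. This is the one place that is not literally covered by the hypothesis $1\leq m$ of \cref{lem:E_descents_with_n}, and I expect this boundary case to be the only real obstacle. The remedy is to note that the proof of the lemma applies verbatim at $m=0$: the event $\{X_1\cdots X_n=1\}$ coincides with $\{X_1\cdots X_{n-1}=1\}\cap\{\tau(i)\neq 0 \text{ for all } i\in[n]\}$, the color condition has probability $((r-1)/r)^n$, and letting $\mathfrak{S}_n$ act by permuting all $n$ entries of the one-line notation shows that the conditional probability of $X_1\cdots X_{n-1}=1$ is $1/n!$. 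Hence the left-hand side also equals $((r-1)/r)^n/n!$.

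Comparing the two computations, both sides equal $\left(\frac{r-1}{r}\right)^{n}\cdot \frac{1}{n!}$, which establishes the identity. Alternatively, one could sidestep the boundary case entirely by giving a single orbit argument treating both sides at once, but invoking \cref{lem:E_descents_with_n} for the shifted product and separately recording its $m=0$ specialization is the most economical route.
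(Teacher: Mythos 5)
Your proof is correct and follows the route the paper intends: the corollary is stated without proof as an immediate consequence of \cref{lem:E_descents_with_n}, with both sides evaluating to $\left(\frac{r-1}{r}\right)^n\cdot\frac{1}{n!}$. Your explicit handling of the $m=0$ boundary case for the left-hand side (which falls outside the lemma's stated hypothesis $1\leq m<n$ but is covered verbatim by its proof) is a careful touch that the paper leaves implicit.
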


Finally, we consider the expectation of arbitrary products of the $X_i$ statistics that contain $X_n$. Our approach is to again use an action by a symmetric group of appropriate size.

\begin{lemma}\label{lem:independence}
    For any $a_1,\ldots,a_j\in [m-1]\subseteq [n]$, 
    \begin{equation}
        \begin{split}
            & \, \E_{\mathfrak{S}_{n,r}}[X_{a_1} \cdots X_{a_j} X_{m+1} X_{m+2} \cdots X_n] \\
            =& \, \E_{\mathfrak{S}_{n,r}}[X_{a_1}\cdots X_{a_j}]\cdot \E_{\mathfrak{S}_{n,r}}[X_{m+1}X_{m+2}\cdots X_n].
        \end{split}
    \end{equation}
\end{lemma}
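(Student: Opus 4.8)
The plan is to obtain the factorization from a group action that cleanly separates the two blocks of indices. Write $A=X_{a_1}\cdots X_{a_j}$ and $B=X_{m+1}X_{m+2}\cdots X_n$. Since each $X_i$ is $\{0,1\}$-valued, both $A$ and $B$ are indicator functions, and the claim is exactly that the two events they record are independent under the uniform measure on $\mathfrak{S}_{n,r}$. The structural observation that makes this plausible is that $A$ depends only on the first $m$ entries of the one-line notation (each $a_i\leq m-1$, so the descent at $a_i$ compares positions $a_i$ and $a_i+1\leq m$), while $B$ depends only on the last $n-m$ entries (the interior descents at $m+1,\ldots,n-1$ compare positions $\geq m+1$, and the descent at $n$ is governed solely by the color condition $\tau(n)\neq 0$). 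The index $m$, excluded from both sets, is precisely what decouples the two blocks.

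Following the strategy of \cref{prop:e_descents_no_n} and \cref{lem:E_descents_with_n}, I would let $G=\mathfrak{S}_m\times \mathfrak{S}_{n-m}$ act on $\mathfrak{S}_{n,r}$ by permuting the first $m$ entries of the one-line notation according to the $\mathfrak{S}_m$-factor and the last $n-m$ entries according to the $\mathfrak{S}_{n-m}$-factor. Each orbit $O$ is determined by the unordered multiset $S_1$ of colored elements occupying positions $1,\ldots,m$ together with the multiset $S_2$ occupying positions $m+1,\ldots,n$. Because the colored elements appearing in $(\omega,\tau)$ are distinct, the orbit $O$ is in bijection with the orderings of $S_1$ times the orderings of $S_2$, has size $m!\,(n-m)!$, and carries the product of the two uniform measures. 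Since $A$ is a function of the $S_1$-ordering alone and $B$ of the $S_2$-ordering alone, they are independent on each orbit, so $\E[AB\mid O]=\E[A\mid O]\,\E[B\mid O]$.

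The crucial step is that $\E[A\mid O]$ is the \emph{same constant} for every orbit $O$. Indeed, every descent recorded by $A$ is an interior descent depending only on the relative order of the $m$ distinct colored elements of $S_1$; applying the order-preserving relabeling of \cref{prop:e_descents_no_n} sends a uniform ordering of $S_1$ to a uniform element of $\mathfrak{S}_m$ and preserves these descents, so $\E[A\mid O]=\E_{\mathfrak{S}_m}[X_{a_1}\cdots X_{a_j}]$ independently of $S_1$. In particular, averaging over orbits gives $\E_{\mathfrak{S}_{n,r}}[A]=\E[A\mid O]$. Substituting $\E[A\mid O]=\E_{\mathfrak{S}_{n,r}}[A]$ into the Law of Total Expectation and pulling this constant out of the sum yields
\[
\E_{\mathfrak{S}_{n,r}}[AB]=\sum_O \pr{}_{\mathfrak{S}_{n,r}}[O]\,\E[A\mid O]\,\E[B\mid O]=\E_{\mathfrak{S}_{n,r}}[A]\sum_O \pr{}_{\mathfrak{S}_{n,r}}[O]\,\E[B\mid O]=\E_{\mathfrak{S}_{n,r}}[A]\,\E_{\mathfrak{S}_{n,r}}[B],
\]
which is the assertion.

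The point to watch, and the reason the argument is asymmetric, is that $\E[B\mid O]$ is genuinely \emph{not} constant across orbits: the descent at position $n$ forces the smallest colored element of $S_2$ (in the order of \eqref{eq:descentordering}) to have nonzero color, so $\E[B\mid O]$ equals $1/(n-m)!$ when $S_2$ contains no color-$0$ element and vanishes otherwise. Hence one cannot factor by asserting that both conditional expectations are constant. The resolution, which I expect to be the heart of the proof, is that only the constancy of $\E[A\mid O]$ is required; once that factor is extracted, the residual sum reassembles into $\E_{\mathfrak{S}_{n,r}}[B]$ automatically. The remaining care lies in verifying cleanly that each orbit is a genuine product set on which the measure factors, and in confirming that the interior-versus-boundary distinction confines the color-$0$ subtlety entirely to $B$.
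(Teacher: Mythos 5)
Your proof is correct and rests on essentially the same mechanism as the paper's: a symmetric-group action permuting one-line-notation entries within the blocks $\{1,\ldots,m\}$ and $\{m+1,\ldots,n\}$ combined with the order-preserving relabeling, the only difference being that the paper writes $\E_{\mathfrak{S}_{n,r}}[AB]=\pr_{\mathfrak{S}_{n,r}}[A=1]\cdot\pr_{\mathfrak{S}_{n,r}}[B=1\mid A=1]$ and evaluates the conditional probability directly via the $\mathfrak{S}_{n-m}$-action on the last block, whereas you decompose over orbits of $\mathfrak{S}_m\times\mathfrak{S}_{n-m}$ and extract the constant $\E[A\mid O]$. Your version has the minor virtue of making explicit the point the paper leaves implicit --- that conditioning on $A=1$ does not bias which colored values occupy the last $n-m$ positions --- and your observation that $\E[B\mid O]$ is genuinely non-constant across orbits correctly identifies why the factorization must be extracted from the $A$-side.
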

\begin{proof}
    Express
    \begin{equation}\label{eq:independence}
        \begin{split}
            &\, \E_{\mathfrak{S}_{n,r}}[X_{a_1} \cdots X_{a_j} X_{m+1} X_{m+2} \cdots X_n] \\
            =& \, \pr_{\mathfrak{S}_{n,r}}[X_{a_1} \cdots X_{a_j} X_{m+1} X_{m+2} \cdots X_n=1] \\
            =& \, \pr_{\mathfrak{S}_{n,r}}[X_{a_1}\cdots X_{a_j}=1]\cdot\pr_{\mathfrak{S}_{n,r}}[X_{m+1}\cdots X_{n}=1\mid X_{a_1}\cdots X_{a_j}=1]
        \end{split}
    \end{equation}
    The first term is $\E_{\mathfrak{S}_{n,r}}[X_{a_1}\cdots X_{a_j}]$, and the group action argument from \cref{lem:E_descents_with_n} shows that
    \[\pr_{\mathfrak{S}_{n,r}}[X_{m+1}\cdots X_{n}=1\mid X_{a_1}\cdots X_{a_j}=1]= \left(\frac{r-1}{r}\right)^{n-m}\cdot \frac{1}{(n-m)!}.\qedhere \]
\end{proof}

\begin{corollary}\label{prop:e_des_with_n}
    Consider any $a_1,\ldots,a_k\in [n]$ with induced blocks $\B_1,\ldots,\B_t$, where $\B_t$ contains $n$. If $n\in \{a_1,\ldots,a_k\}$, then
    \[\E_{\mathfrak{S}_{n,r}}[X_{a_1}\cdots X_{a_k}] = \left(\frac{r-1}{r}\right)^{|\B_t|}\cdot \prod_{i=1}^t \frac{1}{|\B_i|!}.\]
\end{corollary}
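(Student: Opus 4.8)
The plan is to combine the earlier lemmas by factoring the product $X_{a_1}\cdots X_{a_k}$ according to the block containing $n$. Since the set $\{a_1,\ldots,a_k\}$ induces blocks $\B_1,\ldots,\B_t$ with $n\in\B_t$, I first observe that the block $\B_t$ is of the form $\{m+1,m+2,\ldots,n\}$ for some $m<n$: because blocks are maximal runs of consecutive integers generated by the adjacent transpositions $(a_i,a_i+1)$, the block containing $n$ must be a terminal interval. Let $m = n-|\B_t|$, so $\B_t = \{m+1,\ldots,n\}$. I would then split the indices $a_1,\ldots,a_k$ into those lying in $\B_t$ (equivalently, in $\{m+1,\ldots,n\}$) and those lying in $\B_1\cup\cdots\cup\B_{t-1}\subseteq[m-1]$.

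The next step is to reduce the product of the $X_{a_i}$ with indices in $\B_t$ to the full consecutive product $X_{m+1}X_{m+2}\cdots X_n$. The key point here is that for the descent indicators, having a descent at every index in $\{m+1,\ldots,n\}$ forces the entire block to be arranged consistently; more precisely, the subset on which $\prod_{a_i\in\B_t} X_{a_i}=1$ coincides with the subset on which $X_{m+1}\cdots X_n=1$, since the adjacent descent conditions within a block chain together to force descents at every position in the block. (This is exactly the mechanism behind Fulman's block formula \eqref{eq:fulman_expectation} and the group-action arguments in \cref{prop:e_descents_no_n} and \cref{lem:E_descents_with_n}.) Thus, up to repeated indicators, the product $X_{a_1}\cdots X_{a_k}$ equals $\bigl(\prod_{a_i\in[m-1]} X_{a_i}\bigr)\cdot X_{m+1}X_{m+2}\cdots X_n$.

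Once the product is in this factored form, I apply \cref{lem:independence} with the $[m-1]$-indices playing the role of $a_1,\ldots,a_j$, giving
\[
\E_{\mathfrak{S}_{n,r}}[X_{a_1}\cdots X_{a_k}]
= \E_{\mathfrak{S}_{n,r}}\Bigl[\textstyle\prod_{a_i\in[m-1]} X_{a_i}\Bigr]\cdot \E_{\mathfrak{S}_{n,r}}[X_{m+1}\cdots X_n].
\]
For the first factor, the indices all lie in $[m-1]\subseteq[n-1]$, so \cref{prop:e_descents_no_n} evaluates it as $\prod_{i=1}^{t-1}\frac{1}{|\B_i|!}$, since the blocks these indices induce are precisely $\B_1,\ldots,\B_{t-1}$. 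For the second factor, \cref{lem:E_descents_with_n} with this value of $m$ gives $\bigl(\frac{r-1}{r}\bigr)^{n-m}\cdot\frac{1}{(n-m)!} = \bigl(\frac{r-1}{r}\bigr)^{|\B_t|}\cdot\frac{1}{|\B_t|!}$. Multiplying the two factors yields exactly $\bigl(\frac{r-1}{r}\bigr)^{|\B_t|}\prod_{i=1}^t\frac{1}{|\B_i|!}$, as claimed.

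I expect the main obstacle to be the bookkeeping in the reduction step: justifying carefully that the block $\B_t$ containing $n$ really is a terminal interval $\{m+1,\ldots,n\}$, and that restricting to the indices in $[m-1]$ induces exactly the blocks $\B_1,\ldots,\B_{t-1}$ with no interaction across the boundary at $m$. This requires noting that $m\notin\{a_1,\ldots,a_k\}$ (otherwise $m$ and $m+1$ would lie in the same block), so that the adjacent transposition $(m,m+1)$ is absent and the Young subgroup splits cleanly as a direct product across the index $m$. The remaining steps are direct invocations of the preceding lemmas.
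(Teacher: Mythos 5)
Your proof is correct and follows essentially the same route as the paper: split off the terminal block as the consecutive product $X_{m+1}\cdots X_n$, apply \cref{lem:independence}, and evaluate the two factors with \cref{prop:e_descents_no_n} and \cref{lem:E_descents_with_n}. One small point: the reduction of $\prod_{a_i\in\B_t}X_{a_i}$ to $X_{m+1}\cdots X_n$ needs no ``chaining'' of descent conditions (which would in fact fail for a proper subset of positions, since descents at $m+1$ and $m+3$ do not force one at $m+2$) --- the definition of the blocks already forces every element of $\{m+1,\ldots,n-1\}$ to occur among the $a_i$, so after discarding repeats the two products are literally the same.
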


\begin{proof}
    Since $n\in \{a_1,\ldots,a_k\}$, we can express $X_{a_1}\cdots X_{a_k}$ equivalently as \[X_{a_1}\cdots X_{a_{j}} X_{m+1}X_{m+2}\cdots X_n,\] where $a_1,\ldots,a_j\in [m-1]$. By \cref{lem:independence},
    \[\E_{\mathfrak{S}_{n,r}}[X_{a_1}\cdots X_{a_k}]= \E_{\mathfrak{S}_{n,r}}[X_{a_1}\cdots X_{a_{j}}]\cdot \E_{\mathfrak{S}_{n,r}}[X_{m+1}\cdots X_n].\]
    The result follows by applying \cref{prop:e_descents_no_n} and \cref{lem:E_descents_with_n}.
\end{proof}

\subsection{Moments on conjugacy classes without short cycles}

We now consider the expectation of $X_{a_1}\cdots X_{a_k}$ on $C_{\rpart}$ without cycles of lengths $1,2,\ldots,2k$ and establish analogs of \cref{prop:e_descents_no_n} and \cref{prop:e_des_with_n}. Our arguments for \cref{prop:e_descents_no_n} and \cref{prop:e_des_with_n} involved group actions where orbits have exactly one element with \[X_{a_1}\cdots X_{a_k}(\omega,\tau)=1,\] and we will define an appropriate action on $C_{\rpart}$ with the same property.

Fix $a_1,\ldots,a_k\in [n]$, let $\B_1,\ldots,\B_t\subseteq [n]$ be the blocks induced by $a_1,\ldots,a_k$, and let $J=\mathfrak{S}_{\B_1} \times \cdots \times \mathfrak{S}_{\B_t}$ be the Young subgroup of $\mathfrak{S}_n$ generated by $a_1,\ldots,a_k$. Define an action of $J$ on $\mathfrak{S}_{n,r}$ as follows: for all $\pi\in J$ and $(\omega,\tau)\in \mathfrak{S}_{n,r}$,
\begin{equation}\label{eq:J_action}
    \pi\cdot (\omega,\tau) = (\pi,\mathbf{0})(\omega,\tau)(\pi,\mathbf{0})^{-1},
\end{equation}
where $\mathbf{0}$ is the zero coloring. Alternatively, this is the conjugation action of $J$ on $\mathfrak{S}_{n,r}$ after identifying $J$ with the subgroup $J\times \mathbf{0}$. The following result describes orbits under the action given in \eqref{eq:J_action}. 

\begin{lemma}\label{lem:conjugation}
Let $(\omega,\tau)\in \mathfrak{S}_{n,r}$. Let $\pi\in \mathfrak{S}_n$ and $\mathbf{0}$ be the zero coloring. If $(i_1^{c_1},i_2^{c_2},\ldots,i_{\ell}^{c_{\ell}})$ is a cycle in $(\omega,\tau)$, then 
\[(\pi,\mathbf{0})(i_1^{c_1},i_2^{c_2},\ldots,i_{\ell}^{c_{\ell}})(\pi,\mathbf{0})^{-1}=(\pi(i_1)^{c_1},\pi(i_2)^{c_2},\ldots,\pi(i_\ell)^{c_{\ell}})\]
is a cycle in $(\pi,\mathbf{0})(\omega,\tau)(\pi,\mathbf{0})^{-1}$.
\end{lemma}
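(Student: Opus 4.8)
The plan is to reduce the statement to a direct computation using the wreath-product group law and the bijective action on $[n]^r$. First I would identify the inverse $(\pi,\mathbf{0})^{-1}$. From the group operation $(\omega_1,\tau_1)(\omega_2,\tau_2)=(\omega_1\omega_2,(\tau_1\circ\omega_2)+\tau_2)$, one checks that $(\pi,\mathbf{0})^{-1}=(\pi^{-1},\mathbf{0})$, since $\mathbf{0}\circ\pi^{-1}=\mathbf{0}$. I would then compute the conjugate directly: $(\pi,\mathbf{0})(\omega,\tau)=(\pi\omega,\tau)$ because $\mathbf{0}\circ\omega=\mathbf{0}$, and multiplying on the right by $(\pi^{-1},\mathbf{0})$ yields
\[(\pi,\mathbf{0})(\omega,\tau)(\pi,\mathbf{0})^{-1}=(\pi\omega\pi^{-1},\,\tau\circ\pi^{-1}).\]
Denote this colored permutation $(\omega',\tau')$, so that $\omega'=\pi\omega\pi^{-1}$ and $\tau'=\tau\circ\pi^{-1}$.

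Next I would translate the hypothesis that $(i_1^{c_1},\ldots,i_\ell^{c_\ell})$ is a cycle of $(\omega,\tau)$ into equations on $\omega$ and $\tau$. Using the convention $(\omega,\tau)(i^0)=\omega(i)^{\tau(i)}$ together with the definition of the cycle notation, being such a cycle means $(\omega,\tau)(i_h^0)=i_{h+1}^{c_{h+1}}$ for all $h$ (indices read cyclically modulo $\ell$), which is equivalent to the two scalar conditions $\omega(i_h)=i_{h+1}$ and $\tau(i_h)=c_{h+1}$. With these in hand the verification is mechanical: applying $(\omega',\tau')$ to the base point $\pi(i_h)^0$ gives
\[(\omega',\tau')(\pi(i_h)^0)=\omega'(\pi(i_h))^{\tau'(\pi(i_h))}=\bigl(\pi\omega(i_h)\bigr)^{\tau(i_h)}=\pi(i_{h+1})^{c_{h+1}},\]
using $\omega'\circ\pi=\pi\circ\omega$ and $\tau'\circ\pi=\tau$. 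This exhibits $(\pi(i_1)^{c_1},\ldots,\pi(i_\ell)^{c_\ell})$ as a closed orbit of $(\omega',\tau')$; since $\pi$ is injective, the base points $\pi(i_1),\ldots,\pi(i_\ell)$ are distinct, so the orbit is a genuine $\ell$-cycle, completing the argument.

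The computation is short, so the only real obstacle is the color bookkeeping. Because this paper adopts the convention $(\omega,\tau)(i^0)=\omega(i)^{\tau(i)}$ rather than the one in \cite{GRWCColoredPermutationStatistics}, the color $c_{h+1}$ attached to $i_{h+1}$ equals $\tau(i_h)$ rather than $\tau(i_{h+1})$, and I would keep this index shift straight throughout, as it is the one place where an index slip would invalidate the identity. The structural fact that makes everything align is that conjugation by a colored permutation with trivial coloring affects colors only through the relabeling $\tau'=\tau\circ\pi^{-1}$ of the underlying index and introduces no new color shift, so each color $c_h$ simply travels with its index under $i_h\mapsto\pi(i_h)$.
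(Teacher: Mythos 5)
Your proposal is correct and takes essentially the same route as the paper: both verify $(\pi,\mathbf{0})^{-1}=(\pi^{-1},\mathbf{0})$ and then check that the conjugate sends $\pi(i_h)^0$ to $\pi(i_{h+1})^{c_{h+1}}$, the paper by applying the three factors in sequence and you by first writing the conjugate in closed form as $(\pi\omega\pi^{-1},\tau\circ\pi^{-1})$. Your explicit attention to the index shift $c_{h+1}=\tau(i_h)$ forced by the convention $(\omega,\tau)(i^0)=\omega(i)^{\tau(i)}$ is exactly the right bookkeeping and matches the paper's computation.
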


\begin{proof}
    First, observe that $(\pi,\mathbf{0})^{-1}=(\pi^{-1},\mathbf{0})$. Now for any $i_j$, we consider the image of $\pi(i_j)^0$ under $(\pi,\mathbf{0})(\omega,\tau)(\pi,\mathbf{0})^{-1}$:
    \begin{equation*}
        \begin{split}
            (\pi,\mathbf{0})(\omega,\tau)(\pi,\mathbf{0})^{-1}(\pi(i_j)^0) & =      (\pi,\mathbf{0})(\omega,\tau)(i_j^0)  \\
            & = (\pi,\mathbf{0})(i_{j+1}^{c_{j+1}}) \\
            & = \pi(i_{j+1})^{c_{j+1}},
        \end{split}
    \end{equation*}
    where in the case of $j=\ell$, we replace $j+1$ with $1$. Hence, $\pi(i_{j+1})^{c_{j+1}}$ follows $\pi(i_j)^{c_j}$ in the cycle notation as claimed. 
\end{proof}

\cref{lem:conjugation} implies that the orbit of any $(\omega,\tau)\in \mathfrak{S}_{n,r}$ under the action in \eqref{eq:J_action} consists of colored permutations that can be obtained by fixing a cycle notation of $(\omega,\tau)$ and permuting elements within each block $\B_1,\ldots,\B_t$ without changing the location of colors. On conjugacy classes $C_{\rpart}$ without cycles of lengths $1,2,\ldots,2k$, we will show that these orbits are particularly well-behaved. 

\begin{lemma}\label{lem:CDalgorithm}
    Let $a_1,\ldots,a_k\in [n-1]$ with induced blocks $\B_1,\ldots,\B_t$, and let $J=\mathfrak{S}_{\B_1} \times \cdots \times \mathfrak{S}_{\B_t}$ act on a conjugacy class $C_{\rpart}$ of $\mathfrak{S}_{n,r}$ by \eqref{eq:J_action}. If $C_{\rpart}$ contains no cycles of lengths $1,2,\ldots,2k$, then each orbit under this action has size $|J|=\prod_{i=1}^t |\B_i|!$. Furthermore, there is a unique element in each orbit that has descents at $a_1,\ldots,a_k$.
\end{lemma}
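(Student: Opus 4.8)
The plan is to prove the two assertions separately: the orbit size via freeness of the action, and the existence/uniqueness of the descent-bearing representative via a sorting argument. For the orbit size, I would show the action in \eqref{eq:J_action} is free, so that the orbit–stabilizer theorem immediately gives $|J| = \prod_{i=1}^t |\B_i|!$. Suppose $\pi \in J$ stabilizes some $(\omega,\tau) \in C_{\rpart}$, i.e. $(\pi,\mathbf{0})$ centralizes $(\omega,\tau)$. Because $\pi \in J = \mathfrak{S}_{\B_1}\times\cdots\times\mathfrak{S}_{\B_t}$ and the nontrivial blocks are exactly those produced by the generators $a_1,\ldots,a_k$, the number of points moved by $\pi$ is at most $\sum_{|\B_i|>1}|\B_i| = p + \sum_i(|\B_i|-1) \le 2k$, where $p \le k$ is the number of nontrivial blocks. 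On the other hand, by \cref{lem:conjugation} conjugation relabels the base values of each cycle by $\pi$ while preserving colors; if $\pi$ moved any point lying in a cycle $C$, then $C$ would be carried to a cycle of the same length and color sequence, which forces $\pi$ to move all $|C| > 2k$ of its points. This contradicts the bound $2k$, so the stabilizer is trivial and each orbit has size $|J|$.

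For the ``furthermore'' statement I would first translate the descent conditions into the one-line notation: $(\omega,\tau)$ has descents at every $a_1,\ldots,a_k \in [n-1]$ precisely when, inside each block $\B_i = \{b, b+1, \ldots, b+s-1\}$, the images $(\omega,\tau)(b^0), (\omega,\tau)((b+1)^0), \ldots, (\omega,\tau)((b+s-1)^0)$ are strictly decreasing with respect to the order \eqref{eq:descentordering}, since the generators $a_j$ are exactly the internal positions of the blocks. The goal is then to show that exactly one element of the (free) orbit has every block internally decreasing. For existence, I would run an explicit sorting procedure---the CD algorithm---that rearranges the entries within each block to force the decreasing pattern, realizing each rearrangement as conjugation by a suitable $\pi \in J$ via \cref{lem:conjugation} so that the output remains in the orbit and hence in $C_{\rpart}$.

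For uniqueness I would take two orbit elements $(\omega',\tau')$ and $(\omega'',\tau'')$ that both have all blocks decreasing, write $(\omega'',\tau'') = (\sigma,\mathbf{0})(\omega',\tau')(\sigma,\mathbf{0})^{-1}$ for the unique $\sigma \in J$ given by freeness, and argue $\sigma = e$. The colors come for free: within each block both color sequences are weakly decreasing and share the same multiset (as $\tau''(j) = \tau'(\sigma^{-1}(j))$ and $\sigma$ permutes the block), so they agree entrywise, which shows $\sigma$ preserves each color class inside each block. Restricting to a single color class, strictly decreasing values with a common color force strictly decreasing base values, and $\sigma$ preserves the order between distinct blocks (the blocks are intervals kept setwise by $\sigma$); when the relevant target base values lie outside all blocks, matching the two strictly decreasing base sequences forces $\sigma$ to be the identity on that class, and hence on each block.

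The hard part is precisely the remaining case of this last step, and it is where the no-short-cycle hypothesis must do real work. Unlike the clean full-group argument of \cref{prop:e_descents_no_n}, conjugation by $(\pi,\mathbf{0})$ does not merely rearrange one-line entries: by \cref{lem:conjugation} it simultaneously relabels all target base values by $\pi$. Consequently the decreasing condition inside a block $\B_i$ can depend, through the base-value tie-break, on how another block $\B_j$ is relabeled whenever two arrows out of $\B_i$ land in the same block $\B_j$ with equal color, and such configurations are genuinely possible even when all cycles are long. The main obstacle is therefore to show these coupled block-sorting constraints admit a unique simultaneous solution. I expect to resolve this by exploiting that every cycle has length exceeding $2k$ while the blocks jointly contain at most $2k$ points---so the blocks cannot absorb a whole cycle---together with the interval structure of the blocks, in order to prove that the CD algorithm is well-defined, terminates, and returns the unique all-descents element of each orbit.
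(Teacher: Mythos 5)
Your argument for the orbit sizes is correct and is in fact a different (and arguably more self-contained) route than the paper's: you prove freeness directly by noting that any $\pi\in J$ moves at most $2k$ points while \cref{lem:conjugation} forces a stabilizing $\pi$ to move either none or all of the $>2k$ points of each cycle, whereas the paper instead quotes Fulman's orbit count for the projected action on $\mathfrak{S}_n$ and pulls it back through the projection $(\omega,\tau)\mapsto\omega$. Either way this half is fine.

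The ``furthermore'' clause, however, is not actually proved. You correctly reduce existence and uniqueness to showing that the coupled block-sorting constraints (coupled because conjugation by $(\pi,\mathbf{0})$ relabels target base values by $\pi$ at the same time as it permutes source positions, so the decreasing condition in one block can depend on the relabeling chosen in another) admit exactly one simultaneous solution, and you correctly identify the two ingredients that make this work --- the nontrivial blocks contain at most $2k$ points while every cycle has length $>2k$, so no cycle can be absorbed by the blocks. But you then write that you ``expect to resolve'' the hard case by these means; the resolution is never carried out, and it is precisely the combinatorial core of the lemma. The paper's \algname{ColoredDescents} algorithm supplies the missing induction: because every cycle must contain an element lying in a singleton block, each cycle contains at least one symbol appearing exactly once in the relabeled cycle notation; at each step one takes the largest such symbol $j$ whose predecessor's block is not yet resolved, observes that the position mapping to $j$ (and, more generally, the relative order of the positions in that block) is \emph{forced} by the descent requirement because $j$'s eventual relabeling cannot change its rank among the candidate targets, performs the unique admissible replacement, and notes that this strictly increases the number of uniquely-occurring symbols, so the process terminates with every block resolved. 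Forcedness at each step gives uniqueness and the terminal configuration gives existence. Without this (or an equivalent) argument, your proof establishes neither that a descent-bearing representative exists in every orbit nor that it is unique, so as written the proposal has a genuine gap.
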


To prove \cref{lem:CDalgorithm}, we will define an algorithm that identifies necessary conditions for descents at $a_1,\ldots,a_k$ to appear and replaces elements in each block $\B_1,\ldots,\B_t$ appropriately. This algorithm will generalize one used by Fulman in \cite[Proof of Theorem 3]{FulmanJCTA1998}. Since our algorithm is very technical, we will start with an example.

\begin{example}
    Consider indices $1,2,4,5\in [9]$ and the 9-cycle with color 2 \[(\omega,\tau)=(1^0 3^1 8^2 5^2 2^0 7^0   4^1   9^0 6^2)\in \mathfrak{S}_{9,3}.\] The blocks induced by $1,2,4,5$ are \[\text{$\B_1=\{1,2,3\}$, $\B_2=\{4,5,6\}$, $\B_3=\{7\}$, $\B_4=\{8\}$ and $\B_5=\{9\}$.}\]  We wish to find an element in the orbit of $(\omega,\tau)$ under the action in \eqref{eq:J_action} that has descents at positions $1,2,4$, and $5$, so we start by replacing elements in the cycle notation with the smallest number in its corresponding block, resulting in 
    \begin{equation}\label{eq:CD1}
        (1^0 1^1 8^2 4^2 1^0 7^0 4^1  9^0 4^2 ).
    \end{equation}

    We must now  find an appropriate way to replace the instances of $1$ and $4$ with elements in the same block. Ignoring colors for the moment, we observe that the elements $7,8$, and $9$ appear exactly once, and they are respectively preceded by $1, 1$, and $4$. Both $1$ and $4$ appear multiple times in \eqref{eq:CD1}, so we can try to use the information involving $7,8$, or $9$ to change this. For simplicity, we choose the largest element $9$, which is preceded by a $4$ in \eqref{eq:CD1}. The elements directly after appearances of $4$'s are $1^0$, $9^0$, and $1^0$. Regardless of how these two appearances of $1$ are replaced with other elements in $\B_1=\{1,2,3\}$, the element $9^0$ will still be the largest. Then for descents at positions $4$ and $5$ to occur, the element $4^0$ must map to $9^0$. Using this information, we next consider
    \begin{equation}\label{eq:CD2}
        (1^0 1^1 8^2 5^2 1^0 7^0 4^1 9^0 5^2 ),
    \end{equation}
    as we have determined the image of $4^0$, but we have not determined the images of $5^0$ or $6^0$. Observe that since $9$ appeared exactly once in \eqref{eq:CD1}, the element preceding it in \eqref{eq:CD2} now appears exactly once. 
    
    Continuing in this manner, $8$ is now the largest element that appears only once but whose preceding element $1$ in \eqref{eq:CD2} appears multiple times. The elements that follow these appearances of $1$ are $1^1$, $8^2$, and $7^0$. We wish for descents at positions $1$ and $2$, and the unique option for this is 
    \begin{equation}
        (2^0 1^1 8^2 5^2 3^0 7^0 4^1 9^0 5^2).
    \end{equation} 
    
    Finally, we consider repeated instances of $5$ to obtain 
    \begin{equation}
        (2^0 1^1 8^2 5^2 3^0 7^0 4^1 9^0 6^2).
    \end{equation}
    Observe that this is in the orbit of $(\omega,\tau)$ under the action in \eqref{eq:J_action}, and it has descents at positions $1,2,4,$ and $5$. 
\end{example}

We now give an algorithm that formalizes the example above. We then use this to establish \cref{lem:CDalgorithm}.

\bigskip

\begin{algorithm}[H]\DontPrintSemicolon
\caption{\algname{ColoredDescents}}
\KwIn{$(\omega,\tau)\in \mathfrak{S}_{n,r}$ without cycles of lengths $1,2,\ldots,2k$; indices $a_1,\ldots,a_k\in [n]$}
\KwOut{a colored permutation $(\omega',\tau')\in \mathfrak{S}_{n,r}$ in the orbit of $(\omega,\tau)$ under \eqref{eq:J_action}}
$\B_1,\ldots,\B_t\coloneqq$ blocks induced by $a_1,\ldots,a_k$ \\
$\sigma_1,\ldots,\sigma_m\coloneqq$ cycles of $(\omega,\tau)$ \\
$\sigma_1',\ldots,\sigma_m'\coloneqq$ cycles obtained by starting with $\sigma_1,\ldots,\sigma_m$ and replacing each $i\in [n]$ with the smallest number from the block that contains it 
\label{CD:startingcycles}\\
\While{$\sigma_1',\ldots,\sigma_m'$ contains repeated integers from $[n]$}
{
$S\coloneqq$ subset of $[n]$ consisting of elements that appear exactly once in $\sigma_1',\ldots,\sigma_m'$ \\
$j \coloneqq$ largest element in $S$ whose preceding element $i$ in $\sigma_1',\ldots,\sigma_m'$ appears multiple times \\ 
$\B\coloneqq$ block containing $i$ \\
$i_1,\ldots,i_{\ell}\coloneqq$ elements in $\sigma_1',\ldots,\sigma_m'$ that are in the block $\B$\\
$j_1^{c_1},\ldots,j_{\ell}^{c_{\ell}}\coloneqq $ elements respectively following $i_1,\ldots,i_{\ell}$ in $\sigma_1',\ldots,\sigma_m'$ \\
$\leq\coloneqq $ partial order on $j_1^{c_1},\ldots,j_{\ell}^{c_{\ell}}$ given by \eqref{eq:descentordering} with repeated elements treated as distinct, incomparable elements \\
$\preceq \, \coloneqq$ partial order on $i_1,\ldots,i_{\ell}$ formed by starting with $\leq$, replacing each $j_h^{c_h}$ with $i_h$, and reversing the relation in $\leq$ \\
$\sigma_1',\ldots,\sigma_m'\coloneqq \sigma_1',\ldots,\sigma_m'$ after replacing instances of $i_1,\ldots,i_{\ell}$ with minimal elements in $\B$ in a manner that respects $\preceq$ \label{CD:last}
}
\Return $\sigma_1',\ldots,\sigma_m'$
\end{algorithm}

\bigskip

    \begin{proof}[Proof of \cref{lem:CDalgorithm}]
    It was shown in \cite[Proof of Theorem 3]{FulmanJCTA1998} that the conjugation action of $J$ on any conjugacy class $C_{\lambda}$ of $\mathfrak{S}_n$ without cycles of lengths $1,2,\ldots,2k$ results in orbits of size $|J|=\prod_{i=1}^t |\B_i|!$. Define  $f:\mathfrak{S}_{n,r}\to \mathfrak{S}_n$ to be the projection $f(\omega,\tau)=\omega$. Combining all of this with \cref{lem:conjugation}, we conclude that for any $(\omega,\tau)\in C_{\rpart}$, 
    \begin{equation}
    \begin{split}
        |J\cdot (\omega,\tau)| & =|\{(\pi,\mathbf{0})(\omega,\tau)(\pi,\mathbf{0})^{-1}: \pi\in J\}| \\
        & \geq |\{f((\pi,\mathbf{0})(\omega,\tau)(\pi,\mathbf{0})^{-1}): \pi\in J\}| \\
        & = |\{\pi \omega\pi^{-1} : \pi\in J\}| \\
        & = |J|.
        \end{split}
    \end{equation}
    Since $|J\cdot (\omega,\tau)|\leq |J|$ always holds, we conclude $|J\cdot (\omega,\tau)|=|J|$. It now suffices to show that there is a unique element in each orbit with descents at $a_1,\ldots,a_k$, which we do using \algname{ColoredDescents}. We start by showing that this algorithm is well-defined.

    First, observe that the $k$ elements in $a_1,\ldots,a_k$ can induce at most $k$ blocks of size larger than $1$, which accounts for at most $2k$ of the elements in $[n]$. Hence, some blocks in $\B_1,\ldots,\B_t$ must initially consist of only one element. If $(\omega,\tau)\in \mathfrak{S}_{n,r}$ has no cycles of lengths $1,2,\ldots,2k$, then each cycle $\sigma_1',\ldots,\sigma_m'$ in line \ref{CD:startingcycles} must contain some element from a block of size $1$. Consequently, choosing $j$ in the \algname{while} loop is well-defined in the first iteration. After each iteration of the \algname{while} loop, the number of elements that appear exactly once increases in at least one cycle in $\sigma_1',\ldots,\sigma_m'$, as the element that precedes $j$ appears multiple times at the start of the loop but appears exactly once at the end of the loop. Consequently, future iterations of the \algname{while} loop are well-defined, and the algorithm will continue until it terminates at a colored permutation. Furthermore, \algname{ColoredDescents} only replaces elements in the cycle notation with others in the same block while leaving colors unchanged, so by \cref{lem:conjugation}, the output of this algorithm is in the same $J$-orbit as the original colored permutation.
    
    It remains to show that the output permutation from \algname{ColoredDescents} is the unique one in the $J$-orbit of $(\omega,\tau)$ with descents at $a_1,\ldots,a_k$. Observe that at the start of the algorithm, the cycles $\sigma_1',\ldots,\sigma_m'$ in \algname{ColoredDescents} trivially satisfy the property that whenever $i_1<i_2$ appear in $\sigma_1',\ldots,\sigma_m'$ and belong to the same block, the elements $j_1^{c_1}$ and $j_2^{c_2}$ that follow them in $\sigma_1',\ldots,\sigma_m'$ satisfy $j_1^{c_1}>j_2^{c_2}$ with respect to the ordering for descents given in \eqref{eq:descentordering}. By the replacement procedure in the algorithm, this property is preserved after each iteration of the \algname{while} loop, so the colored permutation resulting from \algname{ColoredDescents} has the property that when $i_1<i_2$ are in the same block, the elements following them in the cycle notation satisfy $j_1^{c_1}>j_2^{c_2}$. Consequently, the colored permutation resulting from the algorithm has descents at $a_1,\ldots,a_k$. Additionally, it is clear that at each iteration of line \ref{CD:last}, the algorithm identifies necessary conditions for descents to eventually occur at $a_1,\ldots,a_k$, and the replacement used at this line is unique. Consequently, the output colored permutation must be the unique permutation in the orbit of $(\omega,\tau)$ that has descents at $a_1,\ldots,a_k$. 
\end{proof}

\begin{lemma}\label{prop:e_des_conjugacy_class}
    Let $a_1,\ldots,a_k\in [n]$ with induced blocks $\B_1,\ldots,\B_t$, where $\B_t$ contains $n$. Let $C_{\rpart}$ be any conjugacy class of $\mathfrak{S}_{n,r}$ that contains no cycles of lengths $1,2,\ldots,2k$. If $a_1,\ldots,a_k\in [n-1]$, then
    \begin{equation}\label{eq:DCC1}
        \E_{C_{\rpart}}[X_{a_1}X_{a_2}\cdots X_{a_k}]=\prod_{i=1}^t \frac{1}{|\B_i|!}.
    \end{equation}
    Otherwise,
    \begin{equation}\label{eq:DCC2}
        \E_{C_{\rpart}}[X_{a_1}X_{a_2}\cdots X_{a_k}]=\left(\frac{r-1}{r}\right)^{|\B_t|}\cdot \prod_{i=1}^t \frac{1}{|\B_i|!}.
    \end{equation}
\end{lemma}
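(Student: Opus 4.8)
The plan is to treat the two cases separately via the conjugation action of the Young subgroup $J=\mathfrak{S}_{\B_1}\times\cdots\times\mathfrak{S}_{\B_t}$ on $C_{\rpart}$ given in \eqref{eq:J_action}. For \eqref{eq:DCC1}, suppose $a_1,\ldots,a_k\in[n-1]$. Since $X_{a_1}\cdots X_{a_k}$ is exactly the indicator that $(\omega,\tau)$ has descents at every $a_i$, \cref{lem:CDalgorithm} tells us that each $J$-orbit has size $|J|=\prod_{i=1}^t|\B_i|!$ and contains precisely one element with descents at $a_1,\ldots,a_k$. Thus the number of $(\omega,\tau)\in C_{\rpart}$ with $X_{a_1}\cdots X_{a_k}=1$ equals the number of orbits $|C_{\rpart}|/|J|$, and dividing by $|C_{\rpart}|$ yields $\E_{C_{\rpart}}[X_{a_1}\cdots X_{a_k}]=1/|J|$, which is \eqref{eq:DCC1}.

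For \eqref{eq:DCC2}, suppose instead that $n\in\{a_1,\ldots,a_k\}$. Writing the interval block containing $n$ as $\B_t=\{m+1,\ldots,n\}$, connecting $\B_t$ into a single block requires the generators $(m+1,m+2),\ldots,(n-1,n)$, so $m+1,\ldots,n-1\in\{a_1,\ldots,a_k\}$ and hence $\{m+1,\ldots,n\}\subseteq\{a_1,\ldots,a_k\}$; in particular $|\B_t|\le k$. I would first record the equivalence already used for the full group before \cref{lem:E_descents_with_n}: a colored permutation satisfies $X_{a_1}\cdots X_{a_k}=1$ if and only if it has descents at every index of $\{a_1,\ldots,a_k\}\setminus\{n\}$ \emph{and} satisfies $\tau(i)\neq 0$ for all $i\in\B_t$. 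Here the descent at $n$ is equivalent to $\tau(n)\neq 0$, and the decreasing chain forced by descents at $m+1,\ldots,n$ (with respect to \eqref{eq:descentordering}) pushes every color on $\B_t$ to be nonzero. Crucially, the reduced index set $\{a_1,\ldots,a_k\}\setminus\{n\}$ lies in $[n-1]$ and induces the same blocks and the same $J$, so \cref{lem:CDalgorithm} applies to it as well.

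The heart of the argument, and what I expect to be the main obstacle, is to decouple the descent condition from the color condition across the $J$-orbits. By \cref{lem:conjugation}, conjugating by $\pi\in J$ replaces $\tau$ with $\tau\circ\pi^{-1}$ while permuting positions only within blocks; since $\pi$ preserves $\B_t$, the event $\{\tau(i)\neq 0\ \forall i\in\B_t\}$ depends only on the multiset of colors on $\B_t$ and is therefore constant on each $J$-orbit. Combining this invariance with \cref{lem:CDalgorithm} applied to $\{a_1,\ldots,a_k\}\setminus\{n\}$, each orbit on which the color condition holds contains exactly one element with $X_{a_1}\cdots X_{a_k}=1$, while orbits failing it contain none. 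Consequently the number of $(\omega,\tau)\in C_{\rpart}$ with $X_{a_1}\cdots X_{a_k}=1$ equals the number of orbits satisfying the color condition, namely $|\{(\omega,\tau)\in C_{\rpart}:\tau(i)\neq 0\ \forall i\in\B_t\}|/|J|$, so that
\[
\E_{C_{\rpart}}[X_{a_1}\cdots X_{a_k}]=\frac{1}{|J|}\,\pr_{C_{\rpart}}[\tau(i)\neq 0\ \forall i\in\B_t].
\]

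Finally, because $|\B_t|\le k\le 2k$, \cref{lem:satisfies} shows that the coloring restricted to the $|\B_t|$ positions of $\B_t$ is uniform on $\mathbb{Z}_r^{|\B_t|}$, whence $\pr_{C_{\rpart}}[\tau(i)\neq 0\ \forall i\in\B_t]=\left((r-1)/r\right)^{|\B_t|}$. Substituting this together with $|J|=\prod_{i=1}^t|\B_i|!$ gives \eqref{eq:DCC2}.
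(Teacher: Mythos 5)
Your proposal is correct and follows essentially the same route as the paper: the $J$-conjugation action together with \cref{lem:CDalgorithm} handles the case $a_1,\ldots,a_k\in[n-1]$, and for the case $n\in\{a_1,\ldots,a_k\}$ you use the same equivalence (descents on $\B_t$ plus a descent at $n$ $\Leftrightarrow$ descents on $\B_t\setminus\{n\}$ plus $\tau(i)\neq 0$ on $\B_t$), the same observation that the color condition is $J$-invariant via \cref{lem:conjugation}, and the same application of \cref{lem:satisfies}. The only difference is cosmetic --- you phrase the second case as a direct orbit count where the paper factors it as a conditional probability --- and your extra remarks (that $\{m+1,\ldots,n\}\subseteq\{a_1,\ldots,a_k\}$ and that the reduced index set induces the same blocks) are accurate justifications of steps the paper leaves implicit.
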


\begin{proof}
    First consider when $a_1,\ldots,a_k\in [n-1]$. Define $J=\mathfrak{S}_{\B_1}\times \ldots \times \mathfrak{S}_{\B_t}$, and let $\omega\in J$ act on $C_{\rpart}$ by conjugation as in \eqref{eq:J_action}. \cref{lem:CDalgorithm} shows that this action decomposes $C_{\rpart}$ into orbits of size $|J|$ where exactly one element in each orbit has descents at $a_1,\ldots,a_k$. This implies \eqref{eq:DCC1}.

    For \eqref{eq:DCC2}, we assume without loss of generality that $a_k=n$ and $a_1,\ldots,a_{k-1}\in [n-1]$. Expressing $B_t=\{m+1,\ldots,n\}$, we have that
    \begin{equation}\label{eq:descent_with_n}
        \begin{split}
            & \, \E_{C_{\rpart}}[X_{a_1}X_{a_2}\cdots X_{a_k}] \\
            =& \,\pr_{C_{\rpart}}[\tau(i)\neq 0 \, \forall \, i>m]\cdot \pr_{C_{\rpart}}[X_{a_1}\cdots X_{a_{k-1}}=1\mid \tau(i)\neq 0 \, \forall \, i>m].
        \end{split}
    \end{equation}
    By summing over all choices of nonzero colors and applying \cref{lem:satisfies}, the first term is $((r-1)/r)^{n-m}$. For the second term, \cref{lem:conjugation} shows that the action of $J$ preserves the property that $\tau(i)\neq 0$ for all $i>m$, as the colors on the elements following $m+1,\ldots,n$ in the cycle notation are unaffected by the conjugation action of $J$. Hence, this action stabilizes the subset in $C_{\rpart}$ where $\tau(i)\neq 0$ for all $i>m$. \cref{lem:CDalgorithm} then implies that the second term in \eqref{eq:descent_with_n} is $1/|J|$ as needed.
\end{proof}

Combining our results, we can now establish \cref{thm:1} for $\des_{n,r}$. In fact, this result holds for any statistic that is a linear combination of the  statistics $X_i$, including $\maj_{n,r}$. 

\begin{theorem}\label{thm:same_expectation}
    Let $X=\sum_{i=1}^n c_iX_i$ with $c_i\in \mathbb{R}$, and let $C_{\rpart}$ be a conjugacy class of $\mathfrak{S}_{n,r}$. If $C_{\rpart}$ contains no cycles of lengths $1,2,\ldots,2k$, then $\E_{C_{\rpart}}[X^k]=\E_{\mathfrak{S}_{n,r}}[X^k]$. Furthermore, if $c_n=0$, then this is also equal to $\E_{\mathfrak{S}_n}[X^k]$. 
\end{theorem}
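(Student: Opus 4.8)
The plan is to expand the $k$-th power of $X$ by multilinearity and reduce the statement to a term-by-term comparison of the expectations of the monomials $X_{a_1}\cdots X_{a_k}$, for which the required formulas are already established. First I would write
\[
X^k=\Bigl(\sum_{i=1}^n c_i X_i\Bigr)^{\!k}=\sum_{a_1,\ldots,a_k\in[n]} c_{a_1}\cdots c_{a_k}\, X_{a_1}\cdots X_{a_k},
\]
so that, by linearity of expectation,
\[
\E_{C_{\rpart}}[X^k]=\sum_{a_1,\ldots,a_k\in[n]} c_{a_1}\cdots c_{a_k}\,\E_{C_{\rpart}}[X_{a_1}\cdots X_{a_k}],
\]
and identically for $\E_{\mathfrak{S}_{n,r}}[X^k]$. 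It therefore suffices to show that each coefficient $\E_{C_{\rpart}}[X_{a_1}\cdots X_{a_k}]$ equals the corresponding $\E_{\mathfrak{S}_{n,r}}[X_{a_1}\cdots X_{a_k}]$.

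Next I would fix a tuple $(a_1,\ldots,a_k)$, let $\B_1,\ldots,\B_t$ be the blocks it induces, and split into the two cases according to whether $n\in\{a_1,\ldots,a_k\}$. When $n\notin\{a_1,\ldots,a_k\}$, \cref{prop:e_descents_no_n} gives $\E_{\mathfrak{S}_{n,r}}[X_{a_1}\cdots X_{a_k}]=\prod_{i=1}^t 1/|\B_i|!$, while \eqref{eq:DCC1} of \cref{prop:e_des_conjugacy_class} yields exactly the same value on $C_{\rpart}$. When $n\in\{a_1,\ldots,a_k\}$, \cref{prop:e_des_with_n} and \eqref{eq:DCC2} of \cref{prop:e_des_conjugacy_class} again coincide, both producing $((r-1)/r)^{|\B_t|}\prod_{i=1}^t 1/|\B_i|!$. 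The only hypothesis needed to invoke \cref{prop:e_des_conjugacy_class} is that $C_{\rpart}$ has no cycles of lengths $1,\ldots,2k$, which is precisely what is assumed; since the number of distinct indices among $a_1,\ldots,a_k$ is at most $k$, the nontrivial induced blocks cover at most $2k$ elements, so this hypothesis is always sufficient for every tuple in the expansion. Summing the matching coefficients then gives $\E_{C_{\rpart}}[X^k]=\E_{\mathfrak{S}_{n,r}}[X^k]$.

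For the final claim, if $c_n=0$ then every tuple containing $n$ contributes a factor $c_n=0$ and drops out, leaving only tuples with $a_1,\ldots,a_k\in[n-1]$. For these, \cref{prop:e_descents_no_n} additionally identifies $\E_{\mathfrak{S}_{n,r}}[X_{a_1}\cdots X_{a_k}]$ with $\E_{\mathfrak{S}_n}[X_{a_1}\cdots X_{a_k}]$, and summing over the surviving tuples yields $\E_{\mathfrak{S}_{n,r}}[X^k]=\E_{\mathfrak{S}_n}[X^k]$. I do not expect a serious obstacle here, since all of the substantive content resides in the earlier lemmas; the only points requiring a little care are bookkeeping ones, namely that the monomials $X_{a_1}\cdots X_{a_k}$ may repeat indices (which is harmless, as each $X_i$ is idempotent and the block structure depends only on the underlying set of indices) and the verification just noted that the cycle-length hypothesis is strong enough to apply to every tuple arising in the expansion.
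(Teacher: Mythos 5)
Your proposal is correct and follows essentially the same route as the paper: expand $X^k$ by multilinearity, then match the expectations of the monomials $X_{a_1}\cdots X_{a_k}$ term by term using \cref{prop:e_descents_no_n}, \cref{prop:e_des_with_n}, and \cref{prop:e_des_conjugacy_class}, with the $c_n=0$ case handled by restricting to tuples in $[n-1]$. Your additional bookkeeping remarks (repeated indices are harmless, and the cycle-length hypothesis covers every tuple in the expansion) are accurate and match the paper's implicit reasoning.
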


\begin{proof}
    Using the decomposition $X=\sum_{i=1}^n c_iX_i$ with $c_i\in \mathbb{R}$ and expanding, we obtain
    \begin{equation}\label{eq:CC_moment}
        \E_{C_{\rpart}}[X^k] = \sum_{a_1,\ldots,a_k\in [n]} \left(\prod_{i=1}^k c_i \right) \cdot \E_{C_{\rpart}}[X_{a_1}\cdots X_{a_k}].
    \end{equation}
    Note that the summation ranges over all possible $a_1,\ldots,a_k$, so it is possible that some of the $X_i$'s in the product $X_{a_1}\cdots X_{a_k}$ are redundant. Regardless, using the fact that $C_{\rpart}$ has no cycles of lengths $1,2,\ldots,2k$ with \cref{prop:e_descents_no_n}, \cref{prop:e_des_with_n}, and \cref{prop:e_des_conjugacy_class}, each of the summands in \eqref{eq:CC_moment} is equal to the corresponding summand in
    \begin{equation}\label{eq:Snr_moment}
        \E_{\mathfrak{S}_{n,r}}[X^k] = \sum_{a_1,\ldots,a_k\in [n]} \left(\prod_{i=1}^k c_i \right) \E_{\mathfrak{S}_{n,r}}[X_{a_1}\cdots X_{a_k}],
    \end{equation}
    so the $k$-th moments of $X$ on $C_{\rpart}$ and $\mathfrak{S}_{n,r}$ coincide. 
    
    In the case where $c_n=0$, we can restrict the summation in \eqref{eq:Snr_moment} to $a_1,\ldots,a_k\in [n-1]$. \cref{prop:e_des_conjugacy_class} then implies that each term in the summation for $\E_{\mathfrak{S}_{n,r}}[X^k]$ equals the corresponding one in
    \begin{equation}
        \E_{\mathfrak{S}_{n}}[X^k] = \sum_{a_1,\ldots,a_k\in [n-1]} \left(\prod_{i=1}^k c_i \right) \E_{\mathfrak{S}_{n}}[X_{a_1}\cdots X_{a_k}]
    \end{equation}
    so the $k$-th moments of $X$ on $\mathfrak{S}_{n,r}$ and $\mathfrak{S}_n$ coincide. 
\end{proof}

\begin{corollary}\label{cor:same_expectation}
    Let $C_{\rpart}$ be a conjugacy class of $\mathfrak{S}_{n,r}$. If $C_{\rpart}$ contains no cycles of lengths $1,2,\ldots,2k$, then
    \[\E_{C_{\rpart}}[\des_{n,r}^k]=\E_{\mathfrak{S}_{n,r}}[\des_{n,r}^k] \text{ \quad  and \quad } \E_{C_{\rpart}}[\maj_{n,r}^k]=\E_{\mathfrak{S}_{n,r}}[\maj_{n,r}^k]=\E_{\mathfrak{S}_n}[\maj_{n}^k].\]
\end{corollary}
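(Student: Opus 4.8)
The plan is to obtain the corollary as an immediate specialization of \cref{thm:same_expectation}, since both $\des_{n,r}$ and $\maj_{n,r}$ are by definition linear combinations of the descent indicators $X_1,\ldots,X_n$. All of the substantive work has already been done in \cref{prop:e_descents_no_n}, \cref{prop:e_des_with_n}, and \cref{prop:e_des_conjugacy_class} and packaged into \cref{thm:same_expectation}, so what remains is purely to read off the correct coefficients for each statistic and to track whether the position-$n$ coefficient vanishes.

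First I would apply \cref{thm:same_expectation} to $\des_{n,r}=\sum_{i=1}^n X_i$, which is the linear combination with coefficients $c_i=1$ for every $i\in[n]$. The hypothesis that $C_{\rpart}$ has no cycles of lengths $1,2,\ldots,2k$ is exactly the one assumed, so the theorem gives $\E_{C_{\rpart}}[\des_{n,r}^k]=\E_{\mathfrak{S}_{n,r}}[\des_{n,r}^k]$ directly. Here the coefficient $c_n=1$ is nonzero, so the ``furthermore'' clause of the theorem does not apply; this matches expectations, since a descent at position $n$ can occur in $\mathfrak{S}_{n,r}$ but not in $\mathfrak{S}_n$, and one should not anticipate a collapse of the moment to $\mathfrak{S}_n$.

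Next I would apply the same theorem to $\maj_{n,r}=\sum_{i=1}^{n-1} i\cdot X_i$, which is the linear combination with coefficients $c_i=i$ for $i\in[n-1]$ and, crucially, $c_n=0$. The theorem yields $\E_{C_{\rpart}}[\maj_{n,r}^k]=\E_{\mathfrak{S}_{n,r}}[\maj_{n,r}^k]$, and because $c_n=0$ the ``furthermore'' clause produces the additional equality with the expectation of the same linear combination taken over $\mathfrak{S}_n$. The one small point deserving a remark is the identification of this last quantity with $\E_{\mathfrak{S}_n}[\maj_n^k]$: specializing to $r=1$, the indicators $X_i$ on $\mathfrak{S}_{n,1}\cong\mathfrak{S}_n$ reduce to the classical descent indicators, so $\sum_{i=1}^{n-1} i\cdot X_i$ is precisely $\maj_n$ and the two moments coincide.

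I do not expect any genuine obstacle in this step. The only thing to be careful about is bookkeeping, namely confirming the defining decompositions $\des_{n,r}=\sum_{i=1}^n X_i$ and $\maj_{n,r}=\sum_{i=1}^{n-1} i\,X_i$ and observing that the $\maj_{n,r}$ coefficient at position $n$ vanishes while the $\des_{n,r}$ coefficient there does not, which is exactly what distinguishes the two conclusions in the corollary.
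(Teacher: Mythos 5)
Your proposal is correct and is exactly the intended derivation: the paper states this as an immediate corollary of \cref{thm:same_expectation} (giving no separate proof), obtained by taking $c_i=1$ for all $i$ in the case of $\des_{n,r}$ and $c_i=i$ for $i\le n-1$, $c_n=0$ in the case of $\maj_{n,r}$, with the vanishing of $c_n$ triggering the additional equality with $\E_{\mathfrak{S}_n}[\maj_n^k]$. Your identification of $\sum_{i=1}^{n-1} i\,X_i$ on $\mathfrak{S}_n$ with the classical $\maj_n$ is the right (and only) point needing comment.
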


We conclude this section with \cref{cor:1} for $\des_{n,r}$. Our proof combines the preceding result with the Method of Moments and known asymptotic results.

\begin{corollary}\label{thm:CLT}
    For every $n\geq 1$, let $C_{\rpart_n}$ be a conjugacy class of $\mathfrak{S}_{n,r}$. Suppose that for all $i$, the number of cycles of length $i$ in $\rpart_n$ approaches 0 as $n\to\infty$. Then for sufficiently large $n$, $\des_{n,r}$ has mean $\mu_{n,r}=\frac{rn+r-2}{2r}$ and variance $\sigma_{n,r}^2=\frac{n+1}{12}$ on $C_{\rpart_n}$, and as $n\to\infty$, the random variable $\frac{\des_{n,r}-\mu_{n,r}}{\sigma_{n,r}}$ converges in distribution to the standard normal distribution. 
\end{corollary}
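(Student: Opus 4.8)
The plan is to apply the Method of Moments by showing that every moment of the standardized statistic $\frac{\des_{n,r}-\mu_{n,r}}{\sigma_{n,r}}$ on the conjugacy classes $C_{\rpart_n}$ converges to the corresponding moment of the standard normal distribution, which is uniquely determined by its moments. The crucial leverage comes from \cref{cor:same_expectation}: the $k$-th moment of $\des_{n,r}$ on $C_{\rpart}$ equals the $k$-th moment on the full group $\mathfrak{S}_{n,r}$ whenever $C_{\rpart}$ has no cycles of lengths $1, 2, \ldots, 2k$. So the first step is to verify that the hypothesis on $\rpart_n$ guarantees this no-short-cycle condition eventually holds for any fixed $k$.

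First I would fix an arbitrary positive integer $k$ and argue that for $n$ sufficiently large, $C_{\rpart_n}$ has no cycles of lengths $1, 2, \ldots, 2k$. This follows from the assumption that for each fixed $i$, the number of cycles of length $i$ in $\rpart_n$ tends to $0$ as $n \to \infty$; since these counts are nonnegative integers, converging to $0$ means they are eventually exactly $0$. Taking the maximum of the finitely many thresholds over $i = 1, 2, \ldots, 2k$ gives an $N_k$ beyond which none of these short cycles appear. Consequently, for $n \geq N_k$, \cref{cor:same_expectation} gives $\E_{C_{\rpart_n}}[\des_{n,r}^k] = \E_{\mathfrak{S}_{n,r}}[\des_{n,r}^k]$. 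Since this holds for each individual $k$, and moments up to order $k$ determine the mean and variance (the cases $k=1,2$), the mean and variance of $\des_{n,r}$ on $C_{\rpart_n}$ agree with those on $\mathfrak{S}_{n,r}$ for large $n$, namely $\mu_{n,r} = \frac{rn+r-2}{2r}$ and $\sigma_{n,r}^2 = \frac{n+1}{12}$ by \cref{thm:normal_Snr}.

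Next I would pass to the standardized variables. Write $Z_{n,r} = \frac{\des_{n,r}-\mu_{n,r}}{\sigma_{n,r}}$, viewed as a random variable on $C_{\rpart_n}$, and let $Z_{n,r}'$ denote the same standardization viewed on $\mathfrak{S}_{n,r}$. For any fixed $k$, once $n \geq N_k$ the moment $\E_{C_{\rpart_n}}[Z_{n,r}^k]$ is a fixed polynomial in the moments $\E_{C_{\rpart_n}}[\des_{n,r}^j]$ for $j \leq k$ together with $\mu_{n,r}$ and $\sigma_{n,r}$; because each such moment matches the corresponding moment on $\mathfrak{S}_{n,r}$ and the centering and scaling constants are identical, we get $\E_{C_{\rpart_n}}[Z_{n,r}^k] = \E_{\mathfrak{S}_{n,r}}[(Z_{n,r}')^k]$ for all large $n$. \cref{thm:normal_Snr} asserts that $Z_{n,r}'$ converges in distribution to a standard normal; since the normal is determined by its moments and convergence in distribution together with uniform integrability (automatic here from the polynomial moment structure) yields convergence of moments, $\lim_{n\to\infty}\E_{\mathfrak{S}_{n,r}}[(Z_{n,r}')^k]$ equals the $k$-th standard normal moment. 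Combining, $\lim_{n\to\infty}\E_{C_{\rpart_n}}[Z_{n,r}^k]$ equals the $k$-th standard normal moment for every $k$, and the Method of Moments delivers the desired convergence in distribution.

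The main obstacle is the interchange of limits, or rather making the moment-matching argument rigorous across two varying quantities simultaneously: the conjugacy class $C_{\rpart_n}$ changes with $n$, and the threshold $N_k$ beyond which the moment identity holds itself depends on $k$. The key observation that resolves this is that for the Method of Moments one only needs, for each fixed $k$, that $\E_{C_{\rpart_n}}[Z_{n,r}^k] \to \E[\mathcal{N}(0,1)^k]$ as $n \to \infty$, and this is a statement about the tail $n \geq N_k$ only, so the finitely-many small-$n$ exceptions are irrelevant to the limit. The subtler point worth spelling out is that Chow and Mansour's \cref{thm:normal_Snr} only directly asserts convergence in distribution on $\mathfrak{S}_{n,r}$, so I would need to confirm that it also yields convergence of all moments of the standardized statistic; this is standard since $\des_{n,r}$ is bounded between $0$ and $n$ and its standardized moments are controlled, but it should be invoked carefully rather than assumed.
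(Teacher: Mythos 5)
Your proposal is correct and follows essentially the same route as the paper: verify that the integer-valued cycle counts are eventually zero so \cref{cor:same_expectation} applies for each fixed $k$, expand the standardized $k$-th moment as a polynomial in the raw moments (the paper does this via a binomial expansion) to transfer moment equality from $C_{\rpart_n}$ to $\mathfrak{S}_{n,r}$, and conclude via the Method of Moments and \cref{thm:normal_Snr}. If anything, you are more careful than the paper in flagging that \cref{thm:normal_Snr} only asserts convergence in distribution on $\mathfrak{S}_{n,r}$, so that convergence of the standardized moments there requires a (standard) uniform-integrability argument that the paper leaves implicit.
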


\begin{proof}
    The mean and variance follow from using \cref{thm:same_expectation} on the first two moments of $\des_{n,r}$ with the assumption that there are no cycles of lengths $1,2,3$, and $4$ for sufficiently large $n$. For the asymptotic behavior, we fix $k$ and express 
    \begin{equation}\label{eq:normalized_expansion}
        \left(\frac{\des_{n,r}-\mu_{n,r}}{\sigma_{n,r}}\right)^k = \frac{1}{\sigma_{n,r}^k}\sum_{i=0}^k \binom{k}{i} (-1)^{k-i} \mu_{n,r}^{k-i}\des_{n,r}^i.
    \end{equation}
    For all sufficiently large $n$, $C_{\rpart_n}$ contains no cycles of lengths $1,2,\ldots,2k$, so \cref{cor:same_expectation} implies that $\E_{\mathfrak{S}_{n,r}}[\des_{n,r}]=\E_{C_{\rpart_n}}[\des_{n,r}]$ when $n$ is sufficiently large. Hence, the expectation of \eqref{eq:normalized_expansion} on $\mathfrak{S}_{n,r}$ and $C_{\rpart_n}$ coincide when $n$ is sufficiently large. Consequently, this equality holds as $n\to\infty$, and the result now follows from the Method of Moments and \cref{thm:normal_Snr}.
\end{proof}

\begin{corollary}\label{thm:CLTmaj}
    For every $n\geq 1$, let $C_{\rpart_n}$ be a conjugacy class of $\mathfrak{S}_{n,r}$. Suppose that for all $i$, the number of cycles of length $i$ in $\rpart_n$ approaches 0 as $n\to\infty$. Then for sufficiently large $n$, $\maj_{n,r}$ has mean $\mu_{n,r}=\frac{n(n-1)}{4}$ and variance $\sigma_{n,r}^2=\frac{n(2n^2+3n-5)}{72}$ on $C_{\rpart_n}$, and as $n\to\infty$, the random variable $\frac{\maj_{n,r}-\mu_{n,r}}{\sigma_{n,r}}$  converges in distribution to the standard normal distribution. 
\end{corollary}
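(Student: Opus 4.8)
The plan is to mirror the structure of the immediately preceding \cref{thm:CLT}, since $\maj_{n,r}$ is, like $\des_{n,r}$, a linear combination of the descent indicators $X_i$ — specifically $\maj_{n,r}=\sum_{i=1}^{n-1} i\cdot X_i$, which has coefficient $c_n=0$ on $X_n$. This last fact is precisely what lets me invoke the final clause of \cref{thm:same_expectation}, giving equality of moments not only between $C_{\rpart_n}$ and $\mathfrak{S}_{n,r}$ but all the way down to $\mathfrak{S}_n$. So the asymptotic statement should follow from Fulman's known central limit theorem for the classical major index on $\mathfrak{S}_n$, combined with the Method of Moments.

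First I would establish the mean and variance. By \cref{cor:same_expectation}, for $n$ large enough that $C_{\rpart_n}$ has no cycles of lengths $1,2,3,4$ (which holds eventually by the hypothesis that the number of cycles of each fixed length tends to $0$), the first two moments of $\maj_{n,r}$ on $C_{\rpart_n}$ agree with those on $\mathfrak{S}_{n,r}$, and these in turn agree with the first two moments of $\maj_n$ on $\mathfrak{S}_n$. The classical formulas give $\mu = n(n-1)/4$ and $\sigma^2 = n(2n^2+3n-5)/72$ for $\maj_n$ on $\mathfrak{S}_n$ (these are the standard MacMahon-distribution moments, computable from the generating function $[1]_q[2]_q\cdots[n]_q$ or from \eqref{eq:fmaj_distribution} specialized to $r=1$), which produces exactly the stated $\mu_{n,r}$ and $\sigma_{n,r}^2$.

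Next I would handle the limiting distribution. Fix $k$ and expand the standardized $k$-th power exactly as in \eqref{eq:normalized_expansion}:
\[
\left(\frac{\maj_{n,r}-\mu_{n,r}}{\sigma_{n,r}}\right)^k = \frac{1}{\sigma_{n,r}^k}\sum_{i=0}^k \binom{k}{i}(-1)^{k-i}\mu_{n,r}^{k-i}\maj_{n,r}^i.
\]
For all sufficiently large $n$, the hypothesis guarantees $C_{\rpart_n}$ has no cycles of lengths $1,2,\ldots,2k$, so \cref{cor:same_expectation} gives $\E_{C_{\rpart_n}}[\maj_{n,r}^i]=\E_{\mathfrak{S}_{n,r}}[\maj_{n,r}^i]=\E_{\mathfrak{S}_n}[\maj_n^i]$ for each $i\le k$. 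Taking expectations of the display over $C_{\rpart_n}$ and over $\mathfrak{S}_n$ therefore yields identical values for large $n$, so the standardized moments of $\maj_{n,r}$ on $C_{\rpart_n}$ converge to the standardized moments of $\maj_n$ on $\mathfrak{S}_n$. Since the latter converge to the standard normal moments by Fulman's result, the Method of Moments gives convergence in distribution to the standard normal.

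The only real subtlety — and the step I would be most careful about — is bookkeeping with the normalization across the three groups. The moments of $\maj_{n,r}$ match those of the \emph{classical} $\maj_n$ on $\mathfrak{S}_n$ rather than of $\fmaj_{n,r}$, so I must cite the asymptotic normality of the major index on $\mathfrak{S}_n$ (Fulman's theorem, specialized at $r=1$, which is what \cref{thm:fmaj_normal_Snr} reduces to) rather than \cref{thm:fmaj_normal_Snr} for general $r$; the $r$-dependence has already been absorbed by the $c_n=0$ clause of \cref{thm:same_expectation}. I would make explicit that the same centering $\mu_{n,r}$ and scaling $\sigma_{n,r}$ are used on both $C_{\rpart_n}$ and the reference $\mathfrak{S}_n$ distribution, so that the matching of raw moments really does translate into matching of standardized moments in the limit. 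Everything else is routine and parallels the proof of \cref{thm:CLT} verbatim.
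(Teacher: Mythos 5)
Your proposal is correct and matches the paper's proof, which likewise runs the argument of \cref{thm:CLT} verbatim but compares the standardized moments of $\maj_{n,r}$ on $C_{\rpart_n}$ with those of the classical $\maj_n$ on $\mathfrak{S}_n$ via the $c_n=0$ clause of \cref{thm:same_expectation} and \cref{cor:same_expectation}. The only cosmetic difference is attribution: the paper cites the classical asymptotic normality of Mahonian distributions (Feller) rather than Chow--Mansour specialized to $r=1$, but the mathematical content is identical.
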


\begin{proof}
    Apply the same argument from \cref{thm:CLT}, but comparing moments of $\frac{\maj_{n,r}-\mu_{n,r}}{\sigma_{n,r}}$ on $C_{\rpart}$ with the moments of $\frac{\maj_n-\mu_{n,1}}{\sigma_{n,1}}$ on $\mathfrak{S}_n$. The asymptotic normality of Mahonian distributions is well-known \cite{feller}.
\end{proof}

\section{Flag major index}\label{sec:flag-major}

In this section, we establish \cref{thm:1} and \cref{cor:1} for the flag major index statistic $\fmaj_{n,r}$. Our general approach follows the one that we used for $\des_{n,r}$ and $\maj_{n,r}$. However, we need several modifications to account for the $\col_{n,r}$ statistic, and our techniques involve the degree of a colored permutation statistic, as described in \cref{sec:preliminaries3}.  

Throughout this section, we define $Y_{i,c}$ to be the indicator function for the color of $i\in [n]$ being $c\in\mathbb{Z}_r$, 
\[Y_{i,c}(\omega,\tau)=\begin{cases}1 & \text{ if $\tau(i)=c$}\\
0 & \text{ otherwise.}\end{cases}\]
Using the same $X_i$ indicator functions for descents, this allows us to express $\fmaj_{n,r}$ as 
\[\fmaj_{n,r}=r\cdot \sum_{i=1}^{n-1} i X_i+\sum_{i=1}^n \sum_{c=0}^{r-1} cY_{i,c}.\]
In particular, $\fmaj_{n,r}^k$ can be expressed as linear combinations of the random variables
\begin{equation}\label{eq:product}
    X_{a_1}\cdots X_{a_j} Y_{a_{j+1},c_{j+1}} \cdots Y_{a_k,c_k}
\end{equation}
where $a_1,\ldots,a_j\in [n-1]$, $a_{j+1},\ldots,a_k\in [n]$, and $c_{j+1},\ldots,c_k\in \mathbb{Z}_r$. We will consider products of this form, and show that their expectations coincide on $\mathfrak{S}_{n,r}$ and all $C_{\rpart}$ without cycles of lengths $1,2,\dots,2k$. We start with a definition and then give a result on the degree of \eqref{eq:product}.

\begin{definition}\label{def:essXYproduct}
Let $a_1,\ldots,a_j\in [n-1]$, $a_{j+1},\ldots,a_k\in [n]$, and $c_{j+1},\ldots,c_k\in \mathbb{Z}_r$. The \emph{essential set} of the statistic $X_{a_1}\cdots X_{a_j}Y_{a_{j+1},c_{j+1}} \cdots Y_{a_k,c_k}$ is \[\Ess(X_{a_1}\cdots X_{a_j}Y_{a_{j+1},c_{j+1}} \cdots Y_{a_k,c_k})=\left(\bigcup_{i=1}^{j} \{a_i,a_{i}+1\}\right)\bigcup \left( \bigcup_{i=j+1}^k \{a_i\}\right) . \]
Each element in $\Ess(X_{a_1}\cdots X_{a_j}Y_{a_{j+1},c_{j+1}} \cdots Y_{a_k,c_k})$ will be called an \emph{essential position}.
\end{definition}

\begin{lemma}\label{lem:XYproducts}
    Let $a_1,\ldots,a_j\in [n-1]$, $a_{j+1},\ldots,a_k\in [n]$, and $c_{j+1},\ldots,c_k\in \mathbb{Z}_r$. Then $Z=X_{a_1}\cdots X_{a_j}Y_{a_{j+1},c_{j+1}} \cdots Y_{a_k,c_k}$ has degree at most $j+k$. Consequently, its mean coincides on all conjugacy classes $C_{\rpart}$ of $\mathfrak{S}_{n,r}$ without cycles of lengths $1,2,\ldots,j+k$. The same holds for $ZY_{i,c}$ when $i\in \Ess(Z)$ and $c\in \mathbb{Z}_r$ is arbitrary. 
\end{lemma}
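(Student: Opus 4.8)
The plan is to express each indicator function in the product $Z$ as a linear combination of indicators $I_{(K,\kappa)}$ for partial colored permutations, and then to bound the size of the resulting partial colored permutations. First I would recall from \cref{ex:des-maj-fmaj} that each descent indicator $X_{a_i}$ has degree at most $2$: for $a_i \in [n-1]$, it is a sum of indicators $I_{\{(a_i^0,\,j_2^{c_2}),((a_i+1)^0,\,j_1^{c_1})\}}$ of size exactly $2$, each recording the images of the two positions $a_i$ and $a_i+1$. Similarly, each color indicator $Y_{a_i,c_i}$ has degree at most $1$, since $Y_{a_i,c_i}=\sum_{j=1}^n I_{\{(a_i^0,\,j^{c_i})\}}$ is a sum of indicators of size $1$ recording only the image and color of position $a_i$.

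With these decompositions in hand, the degree bound follows from \cref{lem:HRLem4.2}. The product $Z=X_{a_1}\cdots X_{a_j}Y_{a_{j+1},c_{j+1}}\cdots Y_{a_k,c_k}$ is a product of $j$ factors of degree at most $2$ and $k-j$ factors of degree at most $1$. Applying \cref{lem:HRLem4.2} repeatedly, the degree of $Z$ is at most $2j+(k-j)=j+k$. The mean statement then follows immediately from \cref{thm:mainindependence}: a statistic of degree at most $j+k$ has its $1$st moment (i.e.\ its mean) coinciding on all conjugacy classes $C_{\rpart}$ without cycles of lengths $1,2,\ldots,j+k$.

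For the final sentence, consider $ZY_{i,c}$ with $i\in\Ess(Z)$. The essential set consists of exactly the positions whose images are constrained by the factors of $Z$; that is, every essential position already appears as a first coordinate $a_h^0$ in one of the partial colored permutations arising in the decomposition of $Z$. Multiplying by $Y_{i,c}=\sum_{j'}I_{\{(i^0,\,(j')^{c})\}}$ and distributing, each resulting product $I_{(K,\kappa)}\cdot I_{\{(i^0,\,(j')^{c})\}}$ either vanishes (when the two partial colored permutations impose incompatible constraints on position $i$, since $i^0$ already has a prescribed image or color) or collapses to a single $I_{(K',\kappa')}$ whose size does not increase, because the constraint at position $i$ is already accounted for in $K$. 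Hence $ZY_{i,c}$ remains a linear combination of indicators of size at most $j+k$, so its degree is still at most $j+k$, and the same conclusion from \cref{thm:mainindependence} applies.

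The main obstacle is the final claim, which genuinely uses the hypothesis $i\in\Ess(Z)$: without it, multiplying by $Y_{i,c}$ would introduce a constraint on a fresh position and raise the size to $j+k+1$. I would need to argue carefully that when $i$ is essential, the new constraint is redundant rather than additive—that $i^0$ already has its image (and hence its color) pinned down by some factor of $Z$, so that intersecting with the size-$1$ partial colored permutation from $Y_{i,c}$ does not enlarge the support. This requires checking that the color $c$ imposed by $Y_{i,c}$ is compatible with (or else annihilates) the color already recorded at $i^0$ in each surviving term, which is where the bookkeeping of the decomposition must be handled with care.
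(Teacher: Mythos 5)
Your proposal is correct and follows essentially the same route as the paper: decompose each $X_{a_i}$ into size-$2$ indicators and each $Y_{a_i,c_i}$ into size-$1$ indicators, apply \cref{lem:HRLem4.2} to get degree at most $2j+(k-j)=j+k$, and invoke \cref{thm:mainindependence} for the mean. The "careful bookkeeping" you flag for $ZY_{i,c}$ is resolved exactly as you anticipate — since $i\in\Ess(Z)$, each surviving term already contains a pair $(i^0,z^d)$, so the extra factor $I_{\{(i^0,\,x^c)\}}$ is either redundant (when $x^c=z^d$) or annihilates the term — which is precisely the paper's argument.
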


\begin{proof}
    By \cref{thm:mainindependence}, it suffices to show that $Z$ and $ZY_{i,c}$ have degree at most $j+k$.  Observe that summands for $\fmaj_{n,r}$ in \cref{ex:des-maj-fmaj} can be used to express each $X_{a_i}$ using partial colored permutations of size $2$ and each $Y_{a_i,c}$ using partial colored permutations of size $1$. Using \cref{lem:HRLem4.2}, $Z$ has degree at most $2j+(k-j)=j+k$.
    
    For $ZY_{i,c}$, first observe that the resulting expansion described above for $Z$ consists of linear combinations of statistics of the form
    \begin{equation}\label{eq:XYsizeexpansion}
        \prod_{i=1}^j I_{\{(a_i^0,\,x_i^{c_i}),((a_i+1)^0,\,y_i^{d_i})\}}\cdot \prod_{i=j+1}^n I_{\{(a_i^0,\,z_i^{c_i})\}},
    \end{equation}
    where $x_i^{c_i}>y_i^{d_i}$ and $z_i^{c_i}$ are elements in $[n]^r$. Additionally, we can express
    \begin{equation}\label{eq:extraX}
        Y_{i,c}=\sum_{x=1}^{n} I_{\{(i^0,\,x^c)\}}.
    \end{equation}
    It now suffices to show that the product of \eqref{eq:XYsizeexpansion} with any summand $I_{\{(i^0,\,x^c)\}}$ of \eqref{eq:extraX} has degree at most $j+k$. 
    
    Since $i\in \Ess(Z)$, there is some $I_{(K,\kappa)}$ in the product \eqref{eq:XYsizeexpansion} where $(K,\kappa)$ contains an ordered pair of the form $(i^0,z^d)$. If $x^c=z^d$, then multiplying \eqref{eq:XYsizeexpansion} by $I_{(i^0,x^c)}$ has no effect, and hence, this additional indicator function can be omitted. Otherwise, $x^c\neq z^d$ implies that the product of \eqref{eq:XYsizeexpansion} and $I_{\{(i^0,\,x^c)\}}$ is identically 0. Combined, we conclude that $ZY_{i,c}$ also has degree at most $j+k$.
\end{proof}

In statistics of the form $X_{a_1}\cdots X_{a_j}Y_{a_{j+1},c_{j+1}} \cdots Y_{a_k,c_k}$, some of the elements in $a_{j+1},\ldots,a_k$ may be involved with descents at positions $a_1,\ldots,a_j$, while others are not. Our next result allows us to reduce to when all elements in $a_{j+1},\ldots,a_k$ are involved in descents at $a_1,\ldots,a_j$.

\begin{lemma}\label{lem:fmaj_reduction1}
    Let $a_1,\ldots,a_j\in [n-1]$, $a_{j+1},\ldots,a_k\in [n]$, and $c_{j+1},\ldots,c_k\in \mathbb{Z}_r$. If $a_k\notin \Ess(X_{a_1}\cdots X_{a_j} Y_{a_{j+1},c_{j+1}} \cdots Y_{a_{k-1},c_{k-1}}),$ then
    \begin{equation}
        \begin{split}
            & \, \E_{\mathfrak{S}_{n,r}}[X_{a_1}\cdots X_{a_j} Y_{a_{j+1},c_{j+1}} \cdots Y_{a_{k-1},c_{k-1}}Y_{a_k,c_k}] \\
            = & \, \frac{1}{r}\cdot \E_{\mathfrak{S}_{n,r}}[X_{a_1}\cdots X_{a_j} Y_{a_{j+1},c_{j+1}} \cdots Y_{a_{k-1},c_{k-1}}].
        \end{split}
    \end{equation}
    The same holds on any $C_{\rpart}$ without cycles of lengths $1,2,\ldots,j+k$.
\end{lemma}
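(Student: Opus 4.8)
The plan is to separate the color information at position $a_k$ from everything else and show that, because $a_k$ is not an essential position of the remaining statistic, the color $\tau(a_k)=c_k$ is determined independently of whether the remaining product $X_{a_1}\cdots X_{a_j}Y_{a_{j+1},c_{j+1}}\cdots Y_{a_{k-1},c_{k-1}}$ evaluates to $1$. Concretely, I would first set $Z=X_{a_1}\cdots X_{a_j}Y_{a_{j+1},c_{j+1}}\cdots Y_{a_{k-1},c_{k-1}}$ and write
\begin{equation*}
    \E_{\mathfrak{S}_{n,r}}[Z\cdot Y_{a_k,c_k}]=\pr_{\mathfrak{S}_{n,r}}[Z=1]\cdot \pr_{\mathfrak{S}_{n,r}}[\tau(a_k)=c_k\mid Z=1].
\end{equation*}
The first factor is exactly $\E_{\mathfrak{S}_{n,r}}[Z]$, so it remains to prove that the conditional probability of seeing color $c_k$ at position $a_k$ equals $1/r$.

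The key step is to exhibit an action demonstrating this independence. Because $a_k\notin\Ess(Z)$, the position $a_k$ never appears among the essential positions $\bigcup_{i\le j}\{a_i,a_i+1\}\cup\bigcup_{j<i<k}\{a_i\}$ that determine the value of $Z$ as a function on $\mathfrak{S}_{n,r}$. I would therefore let the color-changing subgroup $\mathbb{Z}_r$ act on $\mathfrak{S}_{n,r}$ by adding a constant to the color $\tau(a_k)$ of the single element $a_k$ (equivalently, left-multiplying by $(\mathrm{id},\tau')$ where $\tau'$ is supported only at $a_k$). Since this action only alters $\tau(a_k)$ and leaves $\omega$ and all other colors fixed, and since the value of $Z$ depends only on $\omega$ and on the colors at essential positions (which exclude $a_k$), the action preserves the event $\{Z=1\}$. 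Each orbit within $\{Z=1\}$ has size $r$ and contains exactly one element with $\tau(a_k)=c_k$, which gives the conditional probability $1/r$. This is the analog of the group-action arguments used in \cref{prop:e_descents_no_n} and \cref{lem:E_descents_with_n}.

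The extension to a conjugacy class $C_{\rpart}$ without cycles of lengths $1,2,\ldots,j+k$ is immediate from the machinery already established: by \cref{lem:XYproducts}, both $Z$ and $ZY_{a_k,c_k}$ have degree at most $j+k$, so by \cref{thm:mainindependence} their means coincide on all such $C_{\rpart}$ with their means on $\mathfrak{S}_{n,r}$. Hence the identity
\begin{equation*}
    \E_{C_{\rpart}}[ZY_{a_k,c_k}]=\tfrac{1}{r}\,\E_{C_{\rpart}}[Z]
\end{equation*}
follows from the corresponding identity on $\mathfrak{S}_{n,r}$. The main obstacle I anticipate is the careful verification that the color-shifting action genuinely fixes the value of $Z$: one must confirm that no $X_{a_i}$ factor secretly depends on $\tau(a_k)$. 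This requires checking that $a_k$ is neither some $a_i$ nor some $a_i+1$ for $i\le j$ (which is precisely the hypothesis $a_k\notin\Ess(Z)$, since descents at $a_i$ are governed by the relative order of the images of $a_i^0$ and $(a_i+1)^0$, hence by $\tau(a_i),\tau(a_i+1)$), and that $a_k$ is distinct from the other colored positions $a_{j+1},\ldots,a_{k-1}$ whose colors are being recorded. Once that bookkeeping is clean, the orbit-counting argument is routine.
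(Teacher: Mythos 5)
Your argument on the full group $\mathfrak{S}_{n,r}$ is correct and is essentially the paper's: condition on $\{Z=1\}$, let $\mathbb{Z}_r$ act by shifting the color $\tau(a_k)$, observe that non-essentiality of $a_k$ makes this action stabilize $\{Z=1\}$, and count one element with $\tau(a_k)=c_k$ in each orbit of size $r$.

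The extension to conjugacy classes, however, has a genuine gap. You claim that \cref{lem:XYproducts} together with \cref{thm:mainindependence} shows that the means of $Z$ and $ZY_{a_k,c_k}$ on $C_{\rpart}$ \emph{coincide with their means on $\mathfrak{S}_{n,r}$}. That is not what those results say: they only guarantee that the mean of a degree-$\le(j+k)$ statistic takes the \emph{same} value on every conjugacy class without cycles of lengths $1,\ldots,j+k$; they say nothing about that common value agreeing with the mean over the whole group. (If they did, the paper's main theorem would be an immediate corollary of \cite[Theorem~1.1]{GRWCColoredPermutationStatistics} and there would be nothing left to prove.) Nor can you simply rerun your orbit argument directly on $C_{\rpart}$: shifting $\tau(a_k)$ changes the color of the cycle containing $a_k$, hence changes the cycle type, so the $\mathbb{Z}_r$-action does not preserve a single conjugacy class. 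The paper's fix is to work on the union $\Omega=\bigcup_{c\in\mathbb{Z}_r}\Omega_c$ of the conjugacy classes of single $n$-cycles of each color $c$, which \emph{is} stable under the color-shifting action; the orbit argument then gives $\E_{\Omega}[ZY_{a_k,c_k}]=\tfrac{1}{r}\E_{\Omega}[Z]$, and two applications of \cref{lem:XYproducts} (first to identify $\E_{\Omega_c}$ with $\E_{\Omega_0}$ for each $c$, so that $\E_{\Omega}=\E_{\Omega_0}$, and then to transfer from $\Omega_0$ to an arbitrary $C_{\rpart}$ without cycles of lengths $1,\ldots,j+k$) yields $\E_{C_{\rpart}}[ZY_{a_k,c_k}]=\tfrac{1}{r}\E_{C_{\rpart}}[Z]$. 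Some bridging device of this kind is needed; as written, your conjugacy-class step does not follow.
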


\begin{proof}
    For brevity, let $Z=X_{a_1}\cdots X_{a_j} Y_{a_{j+1},c_{j+1}} \cdots Y_{a_{k-1},c_{k-1}}$ and express
    \begin{equation*}
        \begin{split}
            \E_{\mathfrak{S}_{n,r}}[ZY_{a_k,c_k}] & =\pr_{\mathfrak{S}_{n,r}}[ZY_{a_k,c_k}=1] \\
            & =\pr_{\mathfrak{S}_{n,r}}[Z=1]\cdot \pr_{\mathfrak{S}_{n,r}}[Y_{a_k,c_k}=1\mid Z=1] \\
            & =\E_{\mathfrak{S}_{n,r}}[Z]\cdot \pr_{\mathfrak{S}_{n,r}}[Y_{a_k,c_k}=1\mid Z=1].
        \end{split}
    \end{equation*}
    It now suffices to show the second term is $1/r$. Define an action of $\mathbb{Z}_r$ on $\mathfrak{S}_{n,r}$ as follows: $c\in \mathbb{Z}_r$ acts on $(\omega,\tau)$ by adding $c$ to $\tau(a_k)$. Since $a_k$ is not an essential position of $Z$, this group action is stable on the set of elements where $Z=1$. Within each orbit of size $r$, exactly one satisfies $\tau(a_k)=c_k$. Hence, \[\pr_{\mathfrak{S}_{n,r}}[Y_{a_k,c_k}=1\mid Z=1]=1/r\] as desired.

    For conjugacy classes without cycles of lengths $1,2,\ldots,j+k$, we let $\Omega_c$ be the conjugacy class of $\mathfrak{S}_{n,r}$ consisting of permutations with a single cycle of length $n$ and color $c$, and we consider $\Omega=\bigcup_{c\in \mathbb{Z}} \Omega_c$. The same action of $\mathbb{Z}_r$ on $\mathfrak{S}_{n,r}$ given above is stable on $\Omega$, implying 
    \[\E_{\Omega}[ZY_{a_k,c_k}] = \frac{1}{r} \E_{\Omega}[Z].\]
    \cref{lem:XYproducts} with the Law of Total Expectation implies that
    \[\E_{\Omega}[Z]=\sum_{c\in \mathbb{Z}_r}\pr_{\Omega}[\Omega_c]\cdot \E_{\Omega_c}[Z] = \sum_{c\in \mathbb{Z}_r}\frac{1}{r}\cdot \E_{\Omega_0}[Z]= \E_{\Omega_0}[Z],\]
    and the same holds when $Z$ is replaced with $ZY_{i_k,c_k}$. Applying \cref{lem:XYproducts} again allows us to conclude that on any $C_{\rpart}$ without cycles of lengths $1,2,\ldots,j+k$,
    \[\E_{C_{\rpart}}[ZY_{i_k,c_k}]=\E_{\Omega_0}[ZY_{a_k,c_k}]=\E_{\Omega}[ZY_{a_k,c_k}] = \frac{1}{r} \E_{\Omega}[Z] = \frac{1}{r} \E_{\Omega_0}[Z]= \frac{1}{r} \E_{C_{\rpart}}[Z]. \qedhere\]
\end{proof}

We now consider when $a_{j+1},\ldots,a_k$ is the essential set of $X_{a_1}\cdots X_{a_j}$. In this case, our preceding work with \algname{ColoredDescents} shows that the mean coincides on $\mathfrak{S}_{n,r}$ and any conjugacy classes without cycles of length $2j$.

\begin{lemma}\label{lem:fmaj_reduction2}
    Let $a_1,\ldots,a_j\in [n-1]$, $a_{j+1},\ldots,a_k\in [n]$, and $c_{j+1},\ldots,c_k\in \mathbb{Z}_r$, and define $Z=X_{a_1}\cdots X_{a_j}Y_{a_{j+1},c_{j+1}} \cdots Y_{a_k,c_k}$. If \[\Ess(X_{a_1}\cdots X_{a_j})=\Ess(Y_{a_{j+1},c_{j+1}} \cdots Y_{a_k,c_k}),\] then on any conjugacy class $C_{\rpart}$ of $\mathfrak{S}_{n,r}$ without cycles of lengths $1,2,\ldots,2j$,
    \[\E_{\mathfrak{S}_{n,r}}[Z]=\E_{C_{\rpart}}[Z].\]
\end{lemma}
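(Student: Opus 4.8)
The plan is to reduce $Z$ to a clean product of a color-indicator and a pure descent-indicator, and then to evaluate both $\E_{\mathfrak{S}_{n,r}}[Z]$ and $\E_{C_{\rpart}}[Z]$ against a single explicit block-wise formula. Write $P=\Ess(X_{a_1}\cdots X_{a_j})=\bigcup_{i\le j}\{a_i,a_i+1\}$, and let $\B^{(1)},\ldots,\B^{(p)}$ be the nontrivial blocks induced by $a_1,\ldots,a_j$; each is an integer interval, these partition $P$, and $|P|\le 2j$. The hypothesis $\Ess(X_{a_1}\cdots X_{a_j})=\Ess(Y_{a_{j+1},c_{j+1}}\cdots Y_{a_k,c_k})$ means the $Y$-factors prescribe a color $\gamma(p')$ at \emph{every} $p'\in P$, so $\prod_{i>j}Y_{a_i,c_i}=\mathbf 1[\tau=\gamma\text{ on }P]$ (if any position is assigned two different colors then $Z\equiv 0$ and both sides vanish).

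First I would carry out the reduction. On the event $\{\tau=\gamma\text{ on }P\}$ each colored descent is determined by $\gamma$: the factor $X_{a_i}$ equals $1$ if $\gamma(a_i)>\gamma(a_i+1)$, equals $0$ if $\gamma(a_i)<\gamma(a_i+1)$, and equals $\mathbf 1[\omega(a_i)>\omega(a_i+1)]$ if $\gamma(a_i)=\gamma(a_i+1)$. Hence if $\gamma$ is not weakly decreasing on some block then $Z\equiv 0$; otherwise $Z=\mathbf 1[\tau=\gamma\text{ on }P]\cdot W$, where $W=\prod_{a_i\in I}\mathbf 1[\omega(a_i)>\omega(a_i+1)]$ depends only on $\omega$ and $I=\{a_i:\gamma(a_i)=\gamma(a_i+1)\}$. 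Since every nontrivial block is an interval on which every internal index is a descent position, the blocks induced by $I$ are exactly the maximal constant-color runs of $\gamma$, whose sizes are the multiplicities $m^{(s)}_c$ of the colors $c$ appearing in $\gamma|_{\B^{(s)}}$.

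On the full group $\omega$ and $\tau$ are independent with $\tau$ uniform, so $\E_{\mathfrak{S}_{n,r}}[Z]=r^{-|P|}\,\E_{\mathfrak{S}_n}[W]$; applying Fulman's formula \eqref{eq:fulman_expectation} to the blocks of $I$ gives $\E_{\mathfrak{S}_n}[W]=\prod_s\prod_c (m^{(s)}_c!)^{-1}$, hence $\E_{\mathfrak{S}_{n,r}}[Z]=\prod_s\big(r^{|\B^{(s)}|}\prod_c m^{(s)}_c!\big)^{-1}$. For $C_{\rpart}$ I would instead run the conjugation action of $J=\prod_s\mathfrak{S}_{\B^{(s)}}$ from \eqref{eq:J_action}. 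Because $C_{\rpart}$ has no cycles of length $\le 2j$ and $a_1,\ldots,a_j\in[n-1]$, \cref{lem:CDalgorithm} gives orbits of size $|J|$ each containing a unique element with descents at $a_1,\ldots,a_j$. The structural point, from \cref{lem:conjugation}, is that conjugation only relabels positions inside a block while keeping the colors attached to them, so the multiset $\{\tau(e):e\in\B^{(s)}\}$ is an orbit invariant and the unique descent-achieving representative is forced to carry the weakly decreasing rearrangement of that multiset. Thus an orbit contributes one element with $Z=1$ exactly when its color multiset on every $\B^{(s)}$ equals that of $\gamma|_{\B^{(s)}}$, giving $\E_{C_{\rpart}}[Z]=|J|^{-1}\,\pr_{C_{\rpart}}[\text{matching multisets}]$. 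As \cref{lem:satisfies} makes $(\tau(e))_{e\in P}$ uniform on $\mathbb{Z}_r^{P}$, the matching probability is $\prod_s (|\B^{(s)}|!/\prod_c m^{(s)}_c!)\,r^{-|\B^{(s)}|}$, and dividing by $|J|=\prod_s|\B^{(s)}|!$ recovers exactly the full-group product.

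The hard part will be the $C_{\rpart}$ side, and the observation that makes it go through is that prescribing the colors at all essential positions decouples the problem: for any descent pattern to occur the prescribed colors must already sit in weakly decreasing order within each block, so the color constraint never competes with the $\omega$-ordering but merely selects orbits by their orbit-invariant, uniformly distributed block color multisets. The one genuinely computational step is checking that the multinomial counts arising on $C_{\rpart}$ collapse to the same block-wise product produced by Fulman's formula on the full group — concretely, matching the blocks induced by $I$ with the constant-color runs of $\gamma$; the remaining work is bookkeeping with \cref{lem:conjugation}, \cref{lem:CDalgorithm}, and \cref{lem:satisfies}.
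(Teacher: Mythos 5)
Your proof is correct. It shares the skeleton of the paper's argument --- the trichotomy on consecutive prescribed colors (automatic descent, impossible descent, or reduction to a plain $\omega$-descent), the conjugation action \eqref{eq:J_action}, and the appeals to \cref{lem:CDalgorithm} and \cref{lem:satisfies} --- but it organizes the key step differently. The paper strips out the automatic descent indicators $X_i$, which refines the block structure so that the prescribed coloring $\kappa$ becomes constant on each new block; the payoff of that reduction is that the event $\{\tau \text{ satisfies } \kappa\}$ is stabilized by the refined Young subgroup, so one can condition on it, apply \cref{lem:CDalgorithm} inside the conditioned set to get a conditional descent probability of $1/|J|$, and never compute the common value more explicitly. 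You instead keep the coarse blocks $\B^{(s)}$ induced by $a_1,\ldots,a_j$, observe that the color multiset on each block is an orbit invariant of the $J$-action (a correct consequence of \cref{lem:conjugation}: conjugation by $(\pi,\mathbf{0})$ sends $\tau$ to $\tau\circ\pi^{-1}$), note that the unique descent-achieving orbit representative must carry the weakly decreasing rearrangement of that multiset, and then count orbits whose invariant matches $\gamma$. This replaces the stability argument by a multinomial computation via the uniformity of $(\tau(e))_{e\in P}$ from \cref{lem:satisfies}, and it produces the explicit closed form $\prod_s \bigl(r^{|\B^{(s)}|}\prod_c m_c^{(s)}!\bigr)^{-1}$ for both $\E_{\mathfrak{S}_{n,r}}[Z]$ and $\E_{C_{\rpart}}[Z]$, which agrees with what the paper's two conditional factors multiply out to. Both routes are valid; yours buys an explicit formula for the common expectation at the cost of the multiset bookkeeping, while the paper's reduction avoids that count entirely.
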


\begin{proof}
    We can assume without loss of generality that all of the elements $a_1,\ldots,a_j$ are distinct, and all of the elements $a_{j+1},\ldots,a_k$ are distinct. Let $\B_1,\ldots,\B_t$ be the blocks induced by $a_1,\ldots,a_j$. We first consider when there exists some $i$ and $i+1$ in the same block, and both $Y_{i,c}$ and $Y_{i+1,c'}$ appear in $Z$ for some $c$ and $c'$. If $c<c'$, then a descent at position $i$ is impossible. This implies $Z=0$ on both the entire group and on conjugacy classes, so the result is clear. If $c>c'$, then removing $X_i$ from the product defining $Z$ results in the same statistic. Iterating this argument, we can assume without loss of generality that for any $i$ and $i+1$ in the same block where some $Y_{i,c}$ and $Y_{i+1,c'}$ appear in $Z$, we have $c=c'$. Combined with the fact that
    \[\Ess(X_{a_1}\cdots X_{a_j})=\Ess(Y_{a_{j+1},c_{j+1}} \cdots Y_{a_k,c_k}),\]
    this implies that the property $Y_{a_{j+1},c_{j+1}} \cdots Y_{a_k,c_k}(\omega,\tau)=1$ is equivalent to $\tau$ satisfying some fixed $\kappa: \{a_{j+1},\ldots,a_k\}\to \mathbb{Z}_r$ that is either constant on or not defined on each block $\B_1,\ldots,\B_t$.

    We now show the claimed equality, first by considering $\mathfrak{S}_{n,r}$. Express $\E_{\mathfrak{S}_{n,r}}[Z]$ as 
    \begin{equation*}
        \begin{split} \,\pr_{\mathfrak{S}_{n,r}}[X_{a_1}\cdots X_{a_j}=1\mid Y_{a_{j+1},c_{j+1}}\cdots Y_{a_k,c_k}=1]\cdot \pr_{\mathfrak{S}_{n,r}}[Y_{a_{j+1},c_{j+1}}\cdots Y_{a_k,c_k}=1]. 
        \end{split}
    \end{equation*}
    This can be rewritten as \begin{equation}\label{eq:specialcase1}\pr_{\mathfrak{S}_{n,r}}[X_{a_1}\cdots X_{a_j}=1\mid \text{$\tau$ satisfies $\kappa$}]\cdot \pr_{\mathfrak{S}_{n,r}}[\text{$\tau$ satisfies $\kappa$}].\end{equation}
    There are $k-j$ elements in the domain of $\kappa$, so the second term in \eqref{eq:specialcase1} is $1/r^{k-j}$. For the first term, we use a similar approach as the one for descents. Let $J=\mathfrak{S}_{\B_1}\times \ldots \times \mathfrak{S}_{\B_t}$ act by permuting the one-line notation within each block so that $\sigma \in \mathfrak{S}_{\B_j}$ permutes the images of $i^0$ for $i\in \B_j$. Since $\kappa$ is constant or undefined on each block, this action stabilizes the subset of colored permutations satisfying $\kappa$. Each orbit has size $|J|$ and contains exactly one element where the one-line notation within each block is in descending order. Hence, exactly one element in each orbit has the appropriate descents at $a_1,\ldots,a_j$, and we conclude
    \begin{equation}\label{eq:specialcase2Snr}
        \E_{\mathfrak{S}_{n,r}}[Z]=\frac{1}{r^{k-j}}\prod_{i=1}^t \frac{1}{|\B_i|}.
    \end{equation}
    
    For a conjugacy class $C_{\rpart}$ without cycles of lengths $1,2,\ldots 2j$, we similarly express 
    \begin{equation}\label{eq:specialcase2}
                \E_{C_{\rpart}}[Z]=\pr_{C_{\rpart}}[X_{a_1}\cdots X_{a_j}=1\mid \text{$\tau$ satisfies $\kappa$}]\cdot \pr_{C_{\rpart}}[\text{$\tau$ satisfies $\kappa$}].
    \end{equation}
    Since we assumed that 
    \[\Ess(X_{a_1}\cdots X_{a_j})=\Ess(Y_{a_{j+1},c_{j+1}} \cdots Y_{a_k,c_k})\]
    and this common essential set has at most $2j$ elements, we conclude that the domain of $\kappa$ has size $k-j\leq 2j$. Since $C_{\rpart}$ has no cycles of lengths $1,2,\ldots,2j$, the second term in \eqref{eq:specialcase2} above is $1/r^{k-j}$ by \cref{lem:satisfies}. For the first term, let $\pi\in J$ act on $(\omega,\tau)$ as conjugation by $(\pi,\mathbf{0})$. If $\kappa$ is constant on a block $B_j$, then any $(\omega,\tau)\in C_{\rpart}$ where $\tau$ satisfies $\kappa$ has the property that the elements following $i\in \B_j$ in the cycle notation of $(\omega,\tau)$ have the same color. Then \cref{lem:conjugation} implies that this property is preserved under the action of $J$, and hence $J$ stabilizes the elements in $C_{\rpart}$ satisfying $\kappa$. This allows us to apply \cref{lem:CDalgorithm} to conclude that exactly one element in each orbit of size $|J|$ has descents at $a_1,\ldots,a_j$. Thus, \eqref{eq:specialcase2} becomes
    \[\E_{C_{\rpart}}[Z]=\frac{1}{r^{k-j}}\prod_{i=1}^t \frac{1}{|\B_i|},\]
    and the right side coincides with the right side of \eqref{eq:specialcase2Snr}.
\end{proof}

Finally, we show that the mean of $X_{a_1}\cdots X_{a_j}Y_{a_{j+1},c_{j+1}} \cdots Y_{a_k,c_k}$ coincides on $\mathfrak{S}_{n,r}$ and appropriate $C_{\rpart}$. We then conclude with \cref{thm:1} and \cref{cor:1} for $\fmaj_{n,r}$. 

\begin{lemma}\label{lem:final_fmaj}
Let $a_1,\ldots,a_j\in [n-1]$, $a_{j+1},\ldots,a_k\in [n]$, and $c_{j+1},\ldots,c_k\in \mathbb{Z}_r$, and define $Z=X_{a_1}\cdots X_{a_j}Y_{a_{j+1},c_{j+1}} \cdots Y_{a_k,c_k}$. Then on any conjugacy class $C_{\rpart}$ of $\mathfrak{S}_{n,r}$ without cycles of lengths $1,2,\ldots,j+k$, 
     \[\E_{\mathfrak{S}_{n,r}}[Z]=\E_{C_{\rpart}}[Z].\]
\end{lemma}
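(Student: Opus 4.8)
The plan is to reduce $Z$ to products covered by the two preceding reduction lemmas, so that the common value forced by \cref{lem:XYproducts} can be pinned to the group average $\E_{\mathfrak{S}_{n,r}}[Z]$. It is worth emphasizing at the outset that \cref{lem:XYproducts} only asserts that $\E_{C_{\rpart}}[Z]$ agrees \emph{across} all conjugacy classes without cycles of lengths $1,\ldots,j+k$; it does not by itself identify this common value with the mean over the full group $\mathfrak{S}_{n,r}$, which is precisely the content to be established here.

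First I would dispose of degenerate cases. If two factors $Y_{i,c}$ and $Y_{i,c'}$ with $c\neq c'$ appear, then $Z\equiv 0$ and both sides vanish; if a factor $Y_{i,c}$ is repeated, idempotence of indicators lets us collapse it. Hence we may assume the color positions $a_{j+1},\ldots,a_k$ are distinct and each carries a single color (and similarly that $a_1,\ldots,a_j$ are distinct). Writing $E=\Ess(X_{a_1}\cdots X_{a_j})=\bigcup_{i=1}^{j}\{a_i,a_i+1\}$, I would split the $Y$-factors into those whose position lies in $E$ and the \emph{free} ones whose position lies outside $E$.

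Next I would peel off the free factors one at a time using \cref{lem:fmaj_reduction1}. Since a free position $p$ lies outside $E$ and is distinct from every other color position, it is not an essential position of the product of all the remaining factors, so \cref{lem:fmaj_reduction1} applies and contributes a factor $1/r$ simultaneously to $\E_{\mathfrak{S}_{n,r}}$ and to $\E_{C_{\rpart}}$. The relevant cycle-length hypothesis only weakens as factors are removed, so the original assumption of no cycles of lengths $1,\ldots,j+k$ suffices throughout. After peeling, it remains to prove $\E_{\mathfrak{S}_{n,r}}[Z']=\E_{C_{\rpart}}[Z']$ for $Z'=X_{a_1}\cdots X_{a_j}\prod_{p\in P'}Y_{p,c_p}$ with $P'\subseteq E$.

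The key step is then to \emph{fill in} the remaining essential positions. Using the identity $\sum_{d\in\mathbb{Z}_r}Y_{q,d}=1$ for each $q\in E\setminus P'$, I would expand
\[Z'=\sum_{(d_q)_{q\in E\setminus P'}} X_{a_1}\cdots X_{a_j}\prod_{p\in P'}Y_{p,c_p}\prod_{q\in E\setminus P'}Y_{q,d_q}.\]
Each summand now has color positions exactly $P'\cup(E\setminus P')=E=\Ess(X_{a_1}\cdots X_{a_j})$, so its $Y$-essential set equals its $X$-essential set and \cref{lem:fmaj_reduction2} applies; its requirement of no cycles of lengths $1,\ldots,2j$ is implied by our hypothesis since $k\geq j$ gives $2j\leq j+k$. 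Thus each summand has equal mean on $\mathfrak{S}_{n,r}$ and $C_{\rpart}$, and summing yields $\E_{\mathfrak{S}_{n,r}}[Z']=\E_{C_{\rpart}}[Z']$; reinserting the peeled factors of $1/r$ gives the claim. I expect the main obstacle to be conceptual rather than computational: recognizing that \cref{lem:XYproducts} is insufficient on its own, and that the group average must be recovered by reducing — via free-factor peeling and essential-position filling — to the equal-essential-set situation governed by \cref{lem:fmaj_reduction2}.
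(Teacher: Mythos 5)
Your proposal is correct and takes essentially the same route as the paper: both arguments first peel off the $Y$-factors at non-essential positions via \cref{lem:fmaj_reduction1}, then fill in the remaining essential positions using $\sum_{c\in\mathbb{Z}_r} Y_{q,c}=1$, and finally apply \cref{lem:fmaj_reduction2} to each summand, noting that its hypothesis of no cycles of lengths $1,\ldots,2j$ follows from $j\le k$. The only cosmetic difference is that you fill in all missing essential positions in a single expansion, whereas the paper iterates one position at a time and cites \cref{lem:XYproducts} to keep the degree bounded by $j+k$ at each step.
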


\begin{proof}
    We can assume without loss of generality that all of the elements $a_1,\ldots,a_j$ are distinct, and all of the elements $a_{j+1},\ldots,a_k$ are distinct. Starting with $Y$, observe that if some $a_i\in \{a_{j+1},\ldots,a_k\}$ is not in the essential set of $X_{a_1}\cdots X_{a_j}$, then \cref{lem:fmaj_reduction1} implies that it suffices to remove $Y_{a_i,c_i}$ and prove the result for the resulting statistic. Applying this repeatedly, we see that we can assume $\Ess(Y_{a_{j+1},c_{j+1}} \cdots Y_{a_k,c_k})\subseteq \Ess(X_{a_1}\cdots X_{a_j})$.

    Now suppose that this is a proper subset, so there exists some \[i\in \Ess(X_{a_1}\cdots X_{a_j})\setminus \{a_{j+1},\ldots,a_k\}.\] In this case, we can express
    \[Z=\sum_{c=0}^{r-1} Z\cdot Y_{i,c},\]
    where each statistic in the sum has degree at most $j+k$ by \cref{lem:XYproducts}. Hence, it suffices to show the statements for each $Z\cdot Y_{i,c}$. Iterating this process, we see that it suffices to consider when $\Ess(Y_{a_{j+1},c_{j+1}} \cdots Y_{a_k,c_k})=\Ess(X_{a_1}\cdots X_{a_j})$, and this case follows from \cref{lem:fmaj_reduction2}.
\end{proof}

\begin{theorem}\label{thm:fmaj}
    Let $C_{\rpart}$ be a conjugacy class of $\mathfrak{S}_{n,r}$ without cycles of lengths $1,2,\ldots,2k$. Then \[\E_{\mathfrak{S}_{n,r}}[\fmaj_{n,r}^k]=\E_{C_{\rpart}}[\fmaj_{n,r}^k].\]
\end{theorem}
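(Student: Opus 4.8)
The plan is to reduce the claim to \cref{lem:final_fmaj} by expanding the $k$-th power of $\fmaj_{n,r}$ into the indicator products already analyzed. Using the decomposition
\[
\fmaj_{n,r} = r\sum_{i=1}^{n-1} i\,X_i + \sum_{i=1}^n \sum_{c=0}^{r-1} c\,Y_{i,c},
\]
I would raise both sides to the $k$-th power and distribute the product over the two sums. Each resulting summand is a scalar multiple of a product in which some factors are drawn from the descent sum and the remaining factors from the color sum; after relabeling indices, every such product has the form
\[
Z = X_{a_1}\cdots X_{a_p}\,Y_{a_{p+1},c_{p+1}}\cdots Y_{a_{p+q},c_{p+q}},
\]
with $a_1,\ldots,a_p\in[n-1]$, $a_{p+1},\ldots,a_{p+q}\in[n]$, $c_{p+1},\ldots,c_{p+q}\in\mathbb{Z}_r$, and $p+q=k$. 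By linearity of expectation, both $\E_{\mathfrak{S}_{n,r}}[\fmaj_{n,r}^k]$ and $\E_{C_{\rpart}}[\fmaj_{n,r}^k]$ are the \emph{same} linear combination of the expectations $\E[Z]$ over these products, so it suffices to match $\E_{\mathfrak{S}_{n,r}}[Z]$ with $\E_{C_{\rpart}}[Z]$ for each $Z$ appearing in the expansion.

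Next I would invoke \cref{lem:final_fmaj}, which yields $\E_{\mathfrak{S}_{n,r}}[Z]=\E_{C_{\rpart}}[Z]$ provided $C_{\rpart}$ has no cycles of lengths $1,2,\ldots,p+k$, where $p$ is the number of descent factors in $Z$. The crucial bookkeeping is that every $Z$ produced by the expansion has $p$ descent factors and $q$ color factors with $p+q=k$, so in particular $p\le k$ and hence $p+k\le 2k$. Therefore the hypothesis that $C_{\rpart}$ has no cycles of lengths $1,2,\ldots,2k$ already guarantees the absence of cycles of lengths $1,2,\ldots,p+k$ for every term simultaneously, which is exactly the condition \cref{lem:final_fmaj} requires. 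Summing the term-by-term equalities against their (common) coefficients then gives $\E_{\mathfrak{S}_{n,r}}[\fmaj_{n,r}^k]=\E_{C_{\rpart}}[\fmaj_{n,r}^k]$.

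Essentially all of the substantive work has been packaged into \cref{lem:final_fmaj} and the reduction lemmas feeding it, so the theorem itself is a clean assembly via linearity of expectation. The one point I expect to require care is precisely this index accounting: one must confirm that each product arising from the expansion contains exactly $k$ factors in total (not $2k$), so that the worst-case cycle-length requirement is $p+k\le 2k$ rather than something larger. Once that is verified, no further estimates are needed and the proof concludes immediately.
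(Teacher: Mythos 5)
Your proposal is correct and follows essentially the same route as the paper: expand $\fmaj_{n,r}^k$ into products $X_{a_1}\cdots X_{a_j}Y_{a_{j+1},c_{j+1}}\cdots Y_{a_k,c_k}$ with $j\leq k$, apply \cref{lem:final_fmaj} term by term using $j+k\leq 2k$, and conclude by linearity of expectation. The index accounting you flag as the delicate point is exactly the observation the paper makes ($j\leq k$ implies the cycle-length hypothesis suffices), so no gap remains.
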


\begin{proof}
    As noted in \eqref{eq:product}, $\fmaj_{n,r}^k$ can be expressed as linear combinations of the form \[X_{a_1}\cdots X_{a_j} Y_{a_{j+1},c_{j+1}}\cdots Y_{a_k,c_k}\]
    where $j\leq k$, $a_1,\ldots,a_j\in [n-1]$, $a_{j+1},\ldots,a_k\in [n]$, and $c_{j+1},\ldots,c_k\in \mathbb{Z}_r$. \cref{lem:final_fmaj} implies that on any $C_{\rpart}\subseteq \mathfrak{S}_{n,r}$ without cycles of lengths $1,2,\ldots,j+k$, 
    \begin{equation*}
        \E_{\mathfrak{S}_{n,r}}[X_{a_1}\cdots X_{a_j} Y_{a_{j+1},c_{j+1}}\cdots Y_{a_k,c_k}]=\E_{C_{\rpart}}[X_{a_1}\cdots X_{a_j}Y_{a_{j+1},c_{j+1}}\cdots Y_{a_k,c_k}].
    \end{equation*} 
    Since $j\leq k$, this holds on all $C_{\rpart}$ without cycles of lengths $1,2,\ldots,2k$. By linearity of expectation, we conclude $\E_{C_{\rpart}}[\fmaj_{n,r}^k]=\E_{\mathfrak{S}_{n,r}}[\fmaj_{n,r}^k]$.
\end{proof}

\begin{corollary}\label{cor:fmaj}
    For every $n\geq 1$, let $C_{\rpart_n}$ be a conjugacy class of $\mathfrak{S}_{n,r}$. Suppose that for all $i$, the number of cycles of length $i$ in $\rpart_n$ approaches 0 as $n\to\infty$. Then for sufficiently large $n$, $\fmaj_{n,r}$ has mean $\mu_{n,r}=\frac{n(rn+r-2)}{4}$ and variance $\sigma_{n,r}^2=\frac{2r^2n^3+3r^2n^2+(r^2-6)n}{72}$ on $C_{\rpart_n}$. Furthermore, as $n\to\infty$, the statistic $\frac{\fmaj_{n,r}-\mu_{n,r}}{\sigma_{n,r}}$  converges in distribution to the standard normal distribution. 
\end{corollary}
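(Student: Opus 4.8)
The plan is to reprise the proof of \cref{thm:CLT} almost verbatim, substituting the flag-major moment-matching of \cref{thm:fmaj} for the descent moment-matching of \cref{cor:same_expectation}, and the asymptotic normality of \cref{thm:fmaj_normal_Snr} for that of \cref{thm:normal_Snr}. The first thing I would record is a reduction of the hypothesis: since the number of cycles of each fixed length $i$ in $\rpart_n$ is a nonnegative integer that tends to $0$, for every fixed $k$ there is an $N_k$ such that for all $n\ge N_k$ the class $C_{\rpart_n}$ has no cycles of lengths $1,2,\ldots,2k$. Indeed, only the finitely many lengths $1,\ldots,2k$ must be ruled out, and each associated count is a nonnegative integer that is eventually $0$.

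For the mean and variance, I would take $k=2$ in this reduction, so that for large $n$ the class $C_{\rpart_n}$ has no cycles of lengths $1,2,3,4$. Then \cref{thm:fmaj}, applied with $k=1$ and with $k=2$, gives $\E_{C_{\rpart_n}}[\fmaj_{n,r}]=\E_{\mathfrak{S}_{n,r}}[\fmaj_{n,r}]$ and $\E_{C_{\rpart_n}}[\fmaj_{n,r}^2]=\E_{\mathfrak{S}_{n,r}}[\fmaj_{n,r}^2]$, so the mean and variance on $C_{\rpart_n}$ agree with those on $\mathfrak{S}_{n,r}$. The stated formulas then follow directly from \cref{thm:fmaj_normal_Snr}, since the mean and variance are determined by the first two moments.

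For the limit law I would fix $k$ and expand
\[
\left(\frac{\fmaj_{n,r}-\mu_{n,r}}{\sigma_{n,r}}\right)^k=\frac{1}{\sigma_{n,r}^k}\sum_{i=0}^k\binom{k}{i}(-1)^{k-i}\mu_{n,r}^{k-i}\fmaj_{n,r}^i,
\]
exactly as in \cref{thm:CLT}. For $n\ge N_k$, \cref{thm:fmaj} yields $\E_{C_{\rpart_n}}[\fmaj_{n,r}^i]=\E_{\mathfrak{S}_{n,r}}[\fmaj_{n,r}^i]$ for every $0\le i\le k$, so the $C_{\rpart_n}$- and $\mathfrak{S}_{n,r}$-expectations of the displayed normalized $k$-th power coincide for all large $n$ and hence share the same limit as $n\to\infty$. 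By \cref{thm:fmaj_normal_Snr} the standardized $\fmaj_{n,r}$ on $\mathfrak{S}_{n,r}$ converges to $N(0,1)$, and since $N(0,1)$ is determined by its moments, that common limit is the $k$-th moment of $N(0,1)$. Applying the Method of Moments on $C_{\rpart_n}$ then gives convergence in distribution to $N(0,1)$.

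The whole argument is essentially bookkeeping once \cref{thm:fmaj} is available, so I do not expect a genuine obstacle. The one point demanding care is the order of quantifiers: the threshold $N_k$ guaranteeing moment equality grows with $k$, so each moment must be matched at its own scale before letting $n\to\infty$. This is exactly the structure the Method of Moments requires, and it is the same subtlety already navigated in \cref{thm:CLT} and \cref{thm:CLTmaj}.
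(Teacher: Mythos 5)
Your proposal is correct and matches the paper's intended argument exactly: the paper leaves \cref{cor:fmaj} without an explicit proof precisely because it follows from \cref{thm:fmaj} by the same Method-of-Moments argument used for \cref{thm:CLT}, with \cref{thm:fmaj_normal_Snr} supplying the limiting normality. Your attention to the quantifier order (the threshold $N_k$ depending on $k$) is the same point handled implicitly in \cref{thm:CLT}.
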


\begin{remark}\label{remark:total_order_choice}
    The original definitions of major index and flag major index given in \cite{AdinRoichman1} are based on the total order 
    \begin{equation}\label{eq:other_order_fmaj}
        1^{r-1} < 2^{r-1} <3^{r-1} <\cdots < 1^1 < 2^1 < 3^ 1< \cdots < 1^0<2^0<3^0\cdots .
    \end{equation}
    However, similar to many statistics on $\mathfrak{S}_n$, modifying the total ordering used in $\maj_{n,r}$ and $\fmaj_{n,r}$ does not affect the resulting distributions on $\mathfrak{S}_{n,r}$. 
    
    One method for showing this is to fix colors $c_1,\ldots,c_n\in \mathbb{Z}_r$ and partition $\mathfrak{S}_{n,r}$ into subsets of the form 
    \[\Omega_{(c_1,\ldots,c_n)} = \{(\omega,\tau)\in \mathfrak{S}_{n,r}: \{\omega(i)^{\tau(i)}\}_{i=1}^n =\{i^{c_i}\}_{i=1}^n\}.\]
    Any total order on $[n]^r$ will restrict to a total order on $\{i^{c_i}\}_{i=1}^n $. By replacing elements in the one-line notation of $(\omega,\tau)\in \Omega_{(c_1,\ldots,c_n)}$ with their images under the unique order-preserving map from $\{i^{c_i}\}_{i=1}^n$ to $[n]$, one can show that
    \begin{equation}\label{eq:maj_order}
        \sum_{(\omega,\tau)\in \Omega_{(c_1,\ldots,c_n)}} q^{\maj_{n,r}(\omega,\tau)} = \sum_{\omega\in \mathfrak{S}_n} q^{\maj_n(\omega)} = [1]_q[2]_q\cdots [n]_q,
    \end{equation}
    where $[i]_q=1+q+q^2+\cdots+q^{i-1}$ is the $q$-integer of $i$. For example, for the colored permutation $[3^1,8^0,5^0,6^1,2^2,1^2,4^0,7^1]\in \mathfrak{S}_{8,3}$ from \cref{ex:perm_example_1b}, this replacement results in the permutation $[4,3,2,5,8,7,1,6]\in \mathfrak{S}_8$. This new permutation has the same descent set as the original colored permutation, and hence, has the same major index statistic. 
    
    The corresponding result for $\fmaj_{n,r}$ with respect to any total order on $[n]^r$ is
    \begin{equation}\label{eq:fmaj_order}
        \begin{split}
            \sum_{(\omega,\tau)\in \Omega_{(c_1,\ldots,c_n)}} q^{\fmaj_{n,r}(\omega,\tau)} & = q^{c_1+c_2+\cdots+c_n} \sum_{(\omega,\tau)\in \Omega_{(c_1,\ldots,c_n)}} q^{r\cdot \maj_{n,r}(\omega,\tau)} \\
            & =  q^{c_1+c_2+\cdots+c_n} \prod_{i=1}^n (1+q^{r}+q^{2r}+\cdots+q^{(i-1)r}).
        \end{split}
    \end{equation}
    Since \eqref{eq:maj_order} and \eqref{eq:fmaj_order} hold regardless of the total order on $[n]^r$, it follows that the distributions of $\maj_{n,r}$ and $\fmaj_{n,r}$ coincide on any $\Omega_{(c_1,\ldots,c_n)}$ regardless of the total order. These distributions must therefore coincide on  $\mathfrak{S}_{n,r}$, so \cref{thm:fmaj_normal_Snr} holds regardless of the total order chosen for defining $\fmaj_{n,r}$.

    \cref{thm:fmaj} and \cref{cor:fmaj} can also be established for the flag major index statistic when defined using the total order in \eqref{eq:other_order_fmaj}. One can replace the usage of the the total order \eqref{eq:descentordering} with \eqref{eq:other_order_fmaj} in \algname{ColoredDescents} and prove analogs of the necessary results used throughout \cref{sec:descents,sec:flag-major}. More generally, our arguments can be adapted to establish \cref{thm:fmaj} and \cref{cor:fmaj} when the flag major index is defined with any total order that can be obtained from permuting the colors in \eqref{eq:descentordering}, changing the total order on $[n]$ used within all colors, or a combination of these two.
\end{remark}

\section{Conclusion}\label{sec:conclusion}

In this paper, we analyzed the moments and asymptotic distributions of $\des_{n,r}$ and $\fmaj_{n,r}$ on conjugacy classes $C_{\rpart}$ of $\mathfrak{S}_{n,r}$ with sufficiently long cycles. Our methods showed that the moments and asymptotic distributions of these statistics on $C_{\rpart}$ coincide with those on $\mathfrak{S}_{n,r}$. However, another natural question is to determine the actual distributions for these statistics on $C_{\rpart}$.

\begin{problem}\label{prob:1}
    Study the distributions of $\des_{n,r}$ and $\fmaj_{n,r}$ on conjugacy classes of $\mathfrak{S}_{n,r}$. 
\end{problem}

The distribution for $\des_{n}$ on conjugacy classes of $\mathfrak{S}_n$ was established by Diaconis, McGrath, and Pitman \cite{DiaconisPersiGrath}. Additionally, the distribution of $\des_{B_n}$ on conjugacy classes of $B_n$ was established by Campion Loth, Levet, Liu, Sundaram, and Yin \cite{GRWCColoredPermutationStatistics}, and this built on prior work of Reiner involving a different notion of descents on $B_n$ \cite{ReinerEuropJC1993-2}. As noted in the introduction, $\des_{B_n}$ does not coincide with $\des_{n,2}$, but the general approach may still be insightful for \cref{prob:1}.

Using the distribution of $\des_{n}$ on conjugacy classes of $\mathfrak{S}_n$, Kim and Lee \cite{KimLee2020} established asymptotic normality of the descent statistic on arbitrary conjugacy classes of $\mathfrak{S}_n$. Hence, one can consider the corresponding problem for $\des_{n,r}$ on arbitrary conjugacy classes of $\mathfrak{S}_{n,r}$, and results from \cref{prob:1} may be useful for this. 

\begin{problem}\label{prob:2}
    Determine the asymptotic distribution for $\des_{n,r}$ on arbitrary conjugacy classes of $\mathfrak{S}_{n,r}$.
\end{problem}

\section*{Acknowledgements}

We would like to thank Sara Billey, Michael Levet, Ricky Liu, Shayan Oveis Gharan, Farbod Shokrieh, and Sheila Sundaram for helpful feedback and discussions. Mei Yin's research was partially supported by Simons Foundation Grant MPS-TSM-00007227. An extended abstract of this paper appears in \emph{S\'{e}minaire Lotharingien de Combinatoire} as part of the Proceedings for the 37th International Conference on Formal Power Series and Algebraic Combinatorics (FPSAC) \cite{desfmaj_fpsac}.

\bibliographystyle{plain}
\bibliography{Bibliography}

\end{document}